\newcommand{\R}{\mathbb{R}}
\newcommand{\Q}{\mathbb{Q}}
\newcommand{\E}{\mathbb{E}}
\renewcommand{\P}{\mathbb{P}}
\newcommand{\1}[1]{\mathbbm{1}_{\{#1\}}}
\newcommand{\calG}{\mathcal{G}}
\newcommand{\calA}{\mathcal{A}}
\newcommand{\normal}{\mathcal{N}}
\newcommand{\pdiffII}[3]{\ifstrequal{#2}{#3}
{\frac{\partial^2 #1}{\partial #2^2}}
{\frac{\partial^2 #1}{\partial #2 \partial #3}}
}
\newcommand{\diffII}[3]{\ifthenelse{\equal{#2}{#3}}
{\frac{d^2 #1}{d #2^2}}
{\frac{d^2 #1}{d #2 d #3}}
}
\newcommand{\diff}[2]{\frac{d #1}{d #2}}
\let\Pr\relax
\DeclareMathOperator{\Pr}{Pr}
\DeclareMathOperator{\Var}{Var}
\DeclareMathOperator{\Cov}{Cov}
\DeclareMathOperator{\tr}{tr}
\DeclareMathOperator{\Binom}{Binom}
\DeclareMathOperator{\Pois}{Pois}
\newcommand{\toD}{\stackrel{d}{\to}}
\newtheorem{theorem}{Theorem}[section]
\newtheorem{lemma}[theorem]{Lemma}
\newtheorem{proposition}[theorem]{Proposition}
\newtheorem{definition}[theorem]{Definition}
\newtheorem{question}[theorem]{Question}
\newtheoremstyle{example}{\topsep}{\topsep}%
     {}
     {}
     {\bfseries}
     {}
     {\newline}
     {\thmname{#1}\thmnumber{ #2}\thmnote{ #3}}
\theoremstyle{example}
\let\vec\relax
\DeclareMathOperator{\vec}{vec}
\DeclareMathOperator{\Multinom}{Multinom}
\DeclareMathOperator{\Normal}{\mathcal{N}}
\DeclareMathOperator{\diag}{diag}
\newcommand{\prob}[1]{\mathbb{P}_n\left(#1\right)}
\newcommand{\probcond}[1]{\hat{\mathbb{P}}_n\left(#1\right)}
\newcommand{\var}[1]{\mbox{Var}\left(#1\right)}
\newcommand{\expec}[1]{\mathbb{E}\left(#1\right)}
\newcommand{\probER}[1]{\mathbb{Q}_n\left(#1\right)}
\newcommand{\abs}[1]{\left|#1\right|}
\newcommand{\sigtil}{\widetilde{\sigma}}
\newcommand{\tautil}{\widetilde{\tau}}
\newcommand{\Ntil}{\widetilde{N}}
\newcommand{\Xtil}{\widetilde{X}}
\newcommand{\xitil}{\widetilde{\xi}}
\newcommand{\bigoh}[1]{O\left(#1\right)}
\newcommand{\trans}[1]{{#1}^{\intercal}}
\newcommand{\set}[1]{\left\{#1\right\}}
\newcommand{\defas}{:=}
\renewcommand{\1}{\mathbf{1}}
\newcommand{\indicator}[1]{\mathbbm{1}_{\{#1\}}}
\DeclareMathOperator{\Poisson}{Poisson}
\DeclareMathOperator{\overlap}{overlap}
\newtheorem*{rep@theorem}{\rep@title}
\newcommand{\newreptheorem}[2]{%
\newenvironment{rep#1}[1]{%
 \def\rep@title{#2 \ref{##1}}%
 \begin{rep@theorem}}%
 {\end{rep@theorem}}}
\newtheorem*{rep@proposition}{\rep@title}
\newcommand{\newrepproposition}[2]{%
\newenvironment{rep#1}[1]{%
 \def\rep@title{#2 \ref{##1}}%
 \begin{rep@proposition}}%
 {\end{rep@proposition}}}
\newtheorem*{rep@lemma}{\rep@title}
\newcommand{\newreplemma}[2]{%
\newenvironment{rep#1}[1]{%
 \def\rep@title{#2 \ref{##1}}%
 \begin{rep@lemma}}%
 {\end{rep@lemma}}}
\title{Non-Reconstructability in the Stochastic Block Model}
\author[1,2]{Joe Neeman
\thanks{joeneeman@gmail.com}}
\author[1]{Praneeth Netrapalli
\thanks{praneethn@utexas.edu}}
\affil[1]{Dept. of Electrical and Computer Engg., UT Austin}
\affil[2]{Dept. of Mathematics, UT Austin}
\begin{document}
\maketitle
\begin{abstract}
  We consider the problem of clustering (or reconstruction) in the stochastic block model, in the regime where the
average degree is constant.
For the case of two clusters with equal sizes, recent results \cite{MoNeSl:13,Massoulie:13,MoNeSl:14} show that reconstructability undergoes a phase
transition at the Kesten-Stigum bound of $\lambda_2^2 d = 1$, where $\lambda_2$ is the second largest eigenvalue
of a related stochastic matrix and $d$ is the average degree. In this paper, we address the general case of
more than two clusters and/or unbalanced cluster sizes. Our main result is a sufficient condition for clustering to be
impossible, which matches the existing result for two clusters of equal sizes. A key ingredient in our result is a new
connection between non-reconstructability and non-distinguishability of the block model from an Erd\H{o}s-R\'enyi model
with the same average degree.
We also show that it is some times possible to reconstruct even when $\lambda_2^2 d < 1$.
Our results provide evidence supporting a series of conjectures made by Decelle et al. \cite{Z:2011}
regarding reconstructability and distinguishability of stochastic block models (but do not settle them).
\end{abstract}

 \section{Introduction}
 Stochastic block models are popular models for random graphs that exhibit community
 structures. In these models, the vertices of a graph are divided into at least two
 different classes, and then edges are added between vertices with probabilities
 that depend on the classes of the vertices. We will consider sparse stochastic
 block models, where the average degree of each vertex is constant, while the number
 of vertices tends to infinity.
 
 The fundamental problem in stochastic block models is the \emph{community detection},
 or \emph{reconstruction} problem: if we are given a graph from a stochastic block model,
 but we are not told which vertex belongs to which class, then can we recover this
 information from the structure of the graph?
 In the sparse regime, straightforward
 probabilistic arguments show that it is not possible to correctly classify more
 than a certain fraction of the vertices correctly (unlike in the denser case, where it
 is sometimes possible to completely reconstruct the classes). Given this restriction,
 we say that a stochastic block model is reconstructible if there is some algorithm
 that recovers the classes more accurately than a random guess would.
 
 Decelle et al.~\cite{Z:2011} stimulated the study of block model reconstructability with
 a series of striking conjectures, which were backed up by simulations and connections
 to statistics physics. They proposed three main scenarios: i) if the ``signal'' in the model
 is strong enough compared to the ``noise,'' then the model can be reconstructed efficiently,
 ii) if the signal is somewhat weaker, then the model is reconstructible, but it is computationally
 hard to actually carry out the reconstruction, and finally, iii) if the signal is too weak then the
 model is not reconstructible. Decelle et al.\ further divide this third scenario into two parts,
 depending on whether or not belief propagation has a fixed point that is correlated with
 the true classification.
 
 Besides merely conjecturing the existence of these various scenarios, Decelle et al.\ gave
 estimates for the boundaries between them. Specifically, they conjectured that
 the boundary between the efficiently reconstructible and the computationally hard regions
 should always occur at the \emph{Kesten-Stigum bound}, while the reconstructibility
 boundary corresponds to the condensation transition in spin glasses. In the special case
 of two classes with equal sizes, these conjectures have been verified by Mossel et al.~\cite{MoNeSl:13,MoNeSl:14}
 and Massouli\'e~\cite{Massoulie:13}; in this special case, the two transitions coincide and so there is
 no reconstructible-but-hard regime.
 
 Besides the reconstruction problem, it is natural to ask whether a stochastic block model
 can be distinguished from an Erd\H{o}s-R\'enyi model with the same average degree.
 This is called the \emph{distinguishability} problem, and it is intuitively easier than
 the reconstruction problem in the sense that an algorithm that finds communities in
 a block model should also be able to check whether there exist meaningful communities.
 However, we are not aware of a rigorous version of this statement; indeed, one of our
 main results shows that non-reconstructibility follows from something somewhat stronger
 than non-distinguishability (specifically, the existence of a certain second moment).
 
 In addition to showing a connection between reconstruction and distinguishability,
 we explore the distinguishability problem in more detail. Specifically, we show that
 above the Kesten-Stigum bound a block model is orthogonal to the corresponding
 Erd\H{o}s-R\'enyi model; on the other hand, the two models are mutually contiguous
 sufficiently far below the Kesten-Stigum bound. Except in the special case of two equal-sized
 classes (which was previously considered by Mossel et al.~\cite{MoNeSl:13}), we do not know whether the
 contiguity region extends all the way to the Kesten-Stigum bound -- indeed we exhibit
 models with two highly unbalanced classes, where the contiguity region does not
 extend all the way to the Kesten-Stigum bound. We do derive
 sharp bounds for the region on which a certain second moment is finite, but the second
 moment condition in question is only sufficient, but not necessary, to prove contiguity.

 \section{Background and related work}
 Stochastic block models were introduced by Holland, Laskey and Leinhardt~\cite{HLL:83} in the 1980s
 in order to study community detection in random networks, and then re-discovered
 independently by Dyer and Frieze~\cite{DF:89} in the context of computational complexity.
 Their work and subsequent work by -- to mention a few -- Jerrum and Sorkin~\cite{JS:98},
 Condon and Karp~\cite{CK:01}, and Snijders and Nowicki~\cite{SN:97} focused primarily on the regime
 in which the graph is sufficiently dense, and the signal sufficiently strong, to
 exactly recover the communities with high probability.

 More recent work has considered very sparse block models, where the average degree of
 each vertex is bounded as the number of vertices increases.
 In this regime,
 the first algorithm with provable guarantees was given by Coja-Oghlan~\cite{CO:10}. More
 recently, Massouli\'e~\cite{Massoulie:13} and Mossel et al.~\cite{MoNeSl:14} gave algorithms with better
 guarantees, but only in the case of two classes with approximately equal size.
 The case of two classes with very unbalanced sizes was addressed recently by
 Verzelen and Arias-Castro~\cite{VerzelenAriasCastro:14}; they do not study the problem of reconstructing
 communities, but only the problem of detecting whether a community exists.
 The sparse regime has also seen work without proven performance guarantees: Decelle et al.~\cite{Z:2011}
 gave an algorithm based on belief propagation, while Krzakala et al.~\cite{Krzakala_etal:13} proposed
 a spectral algorithm.
 \section{Definitions and results}
 
 A stochastic block model with $s \ge 2$ communities is parametrized by two quantities: the distribution
 $\pi \in \Delta_{s-1}$ of vertex classes and the symmetric matrix $M \in \R^{s \times s}$ of edge
 probabilities. Given these two parameters, a random graph from the block model
 $\calG(n, M, \pi)$ is sampled as follows: for each vertex $v$, sample
 a label $\sigma_v$ in $[s] = \{0, 1, \dots, s-1\}$ independently with distribution $\pi$. Then,
 for each pair $(u, v)$, include the edge $(u, v)$ in the graph independently with probability
 $n^{-1} M_{\sigma_u,\sigma_v}$. Since we will work with a fixed $M$ and $\pi$ throughout,
 we denote $\calG(n, M, \pi)$ by $\P_n$. Note that according to the preceding description,
 we have the following explicit form for the density of $\P_n$:
 \[
  \P_n(G, \sigma) = \prod_{v \in V(G)} \pi_{\sigma_v}
  \prod_{(u, v) \in E(G)} \frac{M_{\sigma_u,\sigma_v}}{n}
  \prod_{(u, v) \not \in E(G)} \left(1 - \frac{M_{\sigma_u,\sigma_v}}{n}\right).
 \]
 We will assume throughout that every vertex in $G \sim \P_n$ has the same expected degree.
 (In terms of $M$ and $\pi$, this means that $\sum_j M_{ij} \pi_j$ does not depend on $i$.)
 Without this assumption, reconstruction and distinguishability -- at least in the way
 that we will phrase them -- are trivial, since we gain non-trivial information on
 the class of a vertex just by considering its degree.
 
 With the preceding assumption in mind, let $d = \sum_j M_{ij} \pi_j$ be the expected
 degree of an arbitrary vertex. In order to discuss distinguishability, we will compare
 $\P_n$ with the Erd\H{o}s-R\'enyi distribution $\Q_n := \calG(n, d/n)$.

 Throughout this work, we will make use of the matrix $T$ defined by
 \[
  T_{ij} = \frac 1d \pi_i M_{ij},
 \]
 or in other words, $T = \frac 1d \diag(\pi) M$. Note that $T$ is a stochastic matrix, in the
 sense that it has non-negative elements and all its rows sum to 1. The Perron-Frobenius eigenvectors
 of $T$ are $\pi$ on the left, and $\1$ on the right (where $\1$ denotes the vector of ones),
 and the corresponding eigenvalue is 1. We let $\lambda_1, \dots, \lambda_s$ be the eigenvalues of
 $T$, arranged in order of decreasing absolute value (so that $\lambda_1 = 1$ and $|\lambda_2| \le 1$).
 
 There is an important probabilistic interpretation of the matrix $T$ relating to the local
 structure of $G \sim \P_n$; although we will not rely on this interpretation
 in the current work, it played an important role in~\cite{MoNeSl:13}. Indeed, using
 an argument similar to the one in~\cite{MoNeSl:13}, one can show that for any fixed radius $R$,
 the $R$-neighborhood of a vertex in $G \sim \P_n$ has almost the same distribution as a Galton-Watson
 tree with radius $R$ and offspring distribution $\Poisson(d)$. Then, the class labels on the
 neighborhood can be generated by first choosing the label of the root according to $\pi$ and then,
 conditioned on the root's label being $i$, choosing its children's labels independently to be
 $j$ with probability $T_{ij}$. This procedure continues down the tree: any vertex with parent $u$
 has probability $T_{\sigma_u j}$ to receive the label $j$. Thus, $T$ is the transition matrix
 of a certain Markov process that describes a procedure for approximately generating a local
 neighborhood in $G$.
 
 \subsection{Positive results}
 \subsubsection{Distinguishability}
 It is not hard to show that if $d \lambda_2^2 > 1$ then the block model $\P_n$ is
 asymptotically orthogonal to the Erd\H{o}s-R\'enyi model $\Q_n$, in the sense that there is
 a sequence of events $\Omega_n$ such that $\P_n(\Omega_n) \to 1$ and $\Q_n(\Omega_n) \to 0$.
 Indeed, this statement follows fairly easily from the following cycle-counting result,
 due to Bollob\'as et al.~\cite{BoJaRi:07}
 
\begin{proposition}\label{prop:cycle-count}
Let $X_k$ be the number of $k$-cycles in $G$. Then
\begin{align*}
 X_k &\toD \Pois\left(\frac 1{2k} d^k\right) \text{ under $\Q_n$, and}\\
 X_k &\toD \Pois\left(\frac 1{2k} d^k \tr(T^k)\right) \text{ under $\P_n$.}
\end{align*}
\end{proposition}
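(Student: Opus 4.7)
The plan is the method of factorial moments. Since a Poisson distribution is determined by its factorial moments ($\E[(\Pois(\lambda))_r] = \lambda^r$), it suffices to show $\E[(X_k)_r] \to \lambda_k^r$ for every $r \ge 1$, where $(X)_r := X(X-1)\cdots(X-r+1)$ and $\lambda_k$ denotes the claimed Poisson mean. I will describe the $\P_n$ case; $\Q_n$ is recovered as the special instance where the label structure is trivial.

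For the first moment, expand $X_k$ as a sum of indicators over potential $k$-cycles $C = (v_1, \ldots, v_k)$ on $[n]$; using the identity $\pi_i M_{ij} = d T_{ij}$,
\[
\P_n(C \subset G) = n^{-k} \sum_{\sigma \in [s]^k} \prod_{i=1}^k \pi_{\sigma_i} M_{\sigma_i, \sigma_{i+1}} = n^{-k} d^k \tr(T^k),
\]
with indices mod $k$. There are $\binom{n}{k} \frac{(k-1)!}{2} \sim n^k/(2k)$ potential $k$-cycles, so $\E_{\P_n}[X_k] \to d^k \tr(T^k)/(2k)$, matching the claim (and under $\Q_n$ the $\tr(T^k)$ factor is absent).

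For the $r$th factorial moment, $\E[(X_k)_r]$ equals the expected number of ordered $r$-tuples of distinct $k$-cycles all present in $G$. I would split this sum according to whether the cycles in the tuple are pairwise vertex-disjoint. On the vertex-disjoint part, since $G|_{V_1}$ and $G|_{V_2}$ are independent under $\P_n$ whenever $V_1$ and $V_2$ are disjoint (the labels on disjoint vertex sets are independent, and conditioned on labels the edges are independent), the events $\{C_\ell \subset G\}$ are independent, the probability factors as $(n^{-k} d^k \tr(T^k))^r$, and the number of vertex-disjoint tuples is asymptotic to $n^{rk}/(2k)^r$; the contribution to $\E[(X_k)_r]$ from this part is therefore exactly $\lambda_k^r$ in the limit.

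The main obstacle is bounding the remaining, overlapping contributions. The key observation is that if a tuple of distinct simple cycles is not pairwise vertex-disjoint, its union $H$ cannot be $2$-regular (a $2$-regular graph decomposes uniquely into vertex-disjoint simple cycles, each of which exhausts its own component, so any simple subcycle of a $2$-regular graph must equal one of them); hence some vertex of $H$ has degree $\ge 3$, forcing $e(H) \ge v(H) + 1$. Since $\P_n(H \subset G) = O(n^{-e(H)})$ uniformly in the labeling and each isomorphism type of $H$ admits $O(n^{v(H)})$ embeddings into $[n]$, each such type contributes $O(n^{v(H) - e(H)}) = O(n^{-1})$, and summing over the finitely many isomorphism types (bounded by $v(H) \le rk$) leaves an overlapping contribution of $o(1)$. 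The extension to joint convergence of $(X_{k_1}, \ldots, X_{k_m})$ to independent Poissons is obtained by applying the same disjoint/overlapping split to joint factorial moments, with no new ideas.
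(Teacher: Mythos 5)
Your proof is correct. The paper does not actually prove Proposition~\ref{prop:cycle-count} — it cites Bollob\'as, Janson and Riordan — but your argument (convergence of factorial moments, the exact first-moment computation via $\pi_i M_{ij} = d\,T_{ij}$ yielding $\tr(T^k)$, independence over vertex-disjoint tuples, and the observation that a union of distinct, non-vertex-disjoint simple cycles cannot be $2$-regular, forcing $e(H)\ge v(H)+1$ and hence an $O(n^{-1})$ contribution from each overlapping isomorphism type) is precisely the standard method-of-moments proof used in that reference and in the two-class case of Mossel--Neeman--Sly, so it matches the intended approach.
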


 We remark that the $X_k$ are also asymptotically independent, in the sense that for any
 positive numbers $j_3, \dots, j_m$, the moment
 $\E \prod_{i=3}^m X_i^{j_i}$ converges as $n \to \infty$ to $\E \prod_{i=3}^m Y_i^{j_i}$,
 for independent Poisson variables $Y_3, \dots, Y_m$.

 Now, Chebyshev's inequality implies that $X_k \le \frac{1}{2k} d^k + O(k^{-1/2} d^{k/2})$ under
 $\Q_n$, while $X_k \ge \frac{1}{2k} d^k \tr(T^k) - O(k^{-1/2} d^{k/2})$ under $\P_n$. If
 $d \lambda_2^2 > 1$, then these ranges are disjoint for large enough $k$; in particular,
 the event $\Omega_n = \{X_k \ge \frac{1}{2k} d^k \tr(T^k) - O(k^{-1/2} d^{k/2})\}$ satistifes
 $\P_n(\Omega_n) \to 1$ and $\Q_n(\Omega_n) \to 0$. The details of this argument are contained
 in~\cite{MoNeSl:13} in the case $s = 2$, $\pi = (1/2, 1/2)$, but exactly the same argument applies
 in the general case:
 
 \begin{theorem}\label{thm:distinguish}
  If $d \lambda_2^2 > 1$ then $\P_n$ and $\Q_n$ are asymptotically orthogonal.
 \end{theorem}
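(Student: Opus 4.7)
The plan is to use the $k$-cycle count $X_k$ as a distinguishing statistic, exactly as outlined in the paragraph preceding the theorem. Proposition~\ref{prop:cycle-count} supplies Poisson limits with means $\mu_k^Q = \tfrac{1}{2k}d^k$ under $\Q_n$ and $\mu_k^P = \tfrac{1}{2k}d^k \tr(T^k)$ under $\P_n$, and the limiting variances equal the respective means. What remains is to choose $k$ so that the gap between the two Poisson laws dwarfs their standard deviations, plus a routine concentration step.

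The key structural input is that $T$ has real spectrum. Writing $D = \diag(\pi)$, the conjugate $D^{-1/2} T D^{1/2} = d^{-1} D^{1/2} M D^{1/2}$ is real and symmetric, so $T$ is similar to a real symmetric matrix and $\lambda_2,\dots,\lambda_s \in \R$. For every even integer $k$ this yields
\[
\tr(T^k) - 1 \;=\; \sum_{i \ge 2} \lambda_i^k \;\ge\; \lambda_2^k \;=\; |\lambda_2|^k,
\]
so the gap $\mu_k^P - \mu_k^Q$ is at least $\tfrac{1}{2k}(d|\lambda_2|)^k$, while each limiting standard deviation is of order $k^{-1/2} d^{k/2}$. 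Hence
\[
\frac{\mu_k^P - \mu_k^Q}{\sqrt{\mu_k^P}} \;\gtrsim\; \frac{(d\lambda_2^2)^{k/2}}{\sqrt{k}},
\]
which tends to infinity with (even) $k$ precisely because $d\lambda_2^2 > 1$.

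Given this divergent gap, I would set the threshold at $a_k = \tfrac12(\mu_k^P + \mu_k^Q)$ and take $\Omega_n = \{X_k \ge a_k\}$. Chebyshev's inequality applied to the limit Poisson laws gives, for any $\epsilon > 0$, an even $k$ (independent of $n$) with $\limsup_n \Q_n(\Omega_n) \le \epsilon$ and $\liminf_n \P_n(\Omega_n) \ge 1-\epsilon$. A standard diagonalization -- letting $k = k_n$ grow slowly enough that the Poisson limits in Proposition~\ref{prop:cycle-count} still apply -- then produces a single sequence $\Omega_n$ with $\P_n(\Omega_n)\to 1$ and $\Q_n(\Omega_n)\to 0$, which is asymptotic orthogonality. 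The only structural difference from the $s=2$, $\pi=(1/2,1/2)$ proof in~\cite{MoNeSl:13} is the reality of the spectrum of $T$; once that is in hand, cancellation between complex-conjugate or alternating-sign eigenvalue contributions to $\tr(T^k)$ cannot occur, and no substantive obstacle remains.
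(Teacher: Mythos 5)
Your proposal is correct and follows essentially the same route as the paper: cycle counts via Proposition~\ref{prop:cycle-count}, Chebyshev concentration around the two Poisson means, and a choice of $k$ (growing slowly with $n$) for which the gap $\frac{1}{2k}d^k(\tr(T^k)-1)$ dominates the $O(k^{-1/2}d^{k/2})$ fluctuations when $d\lambda_2^2>1$. Your explicit observation that $T$ is similar to the symmetric matrix $d^{-1}\diag(\pi)^{1/2}M\diag(\pi)^{1/2}$, so that restricting to even $k$ gives $\tr(T^k)-1\ge\lambda_2^k$ without cancellation, is a detail the paper leaves implicit but is exactly the right justification.
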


  \subsubsection{Reconstructability}
  Let $\sigma$ and $\tau$ denote labellings in $[s]^n$, and let
  \begin{align*}
  N_i(\sigma) &= \#\{v : \sigma_v = i\} \\
  N_{ij}(\sigma, \tau) &= \#\{v : \sigma_v = i, \tau_v = j\}.
  \end{align*}
  We define the \emph{overlap} between $\sigma$ and $\tau$ by
  \[
   \overlap(\sigma, \tau) = \frac{1}{n}\max_\rho \sum_{i=1}^s \left(N_{i \rho(i)} (\sigma, \tau) - \frac 1n N_i(\sigma) N_{\rho(i)}(\tau)\right),
  \]
  where the supremum runs over all permutations $\rho$ of $[s]$. In words, $\sigma$ and $\tau$
  have a positive overlap if there is some relabelling of $[s]$ so that they are positively correlated.

  We say that the block model $\P_n = \calG(n, M/n, \pi)$ is \emph{reconstructable}
  if there is some $\delta > 0$ and an algorithm $\calA$ mapping graphs to labellings
  such that if $(G, \sigma) \sim \P_n$ then
  \[
   \lim_{n \to \infty} \Pr(\overlap(\calA(G), \sigma) > \delta) > 0.
  \]

  In other words, we are looking for an algorithm that guarantees a non-trivial overlap, with
  a non-trivial probability, as $n \to \infty$.
  
  Mossel et al.~\cite{MoNeSl:13,MoNeSl:14} and Massouli\'e~\cite{Massoulie:13} show that for balanced two cluster case,
  $d \lambda_2^2 = 1$ is the threshold for reconstructability. It was not known if
  this is also the threshold for unbalanced two cluster case. The following result shows that
  this is not the case.
  \begin{proposition}\label{prop:reconstruction-belowKS}
    For every $\epsilon > 0$, there exist (unbalanced) $2$-cluster models with $d\lambda_2^2 < \epsilon$ where reconstruction is possible.
  \end{proposition}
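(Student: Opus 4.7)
The plan is to construct a highly unbalanced two-cluster model whose minority class forms a planted, internally dense subgraph that can be detected by exhaustive search, even though the spectral quantity $d\lambda_2^2$ is arbitrarily small. Fix a constant $m>2$ (say $m=3$), a large average degree $d>m^2/\epsilon$, and take $\pi=(\alpha,1-\alpha)$ with $\alpha$ small (to be chosen). Set $M_{00}=a=m/\alpha$, so that the typical number of within-class-$0$ neighbors of a class-$0$ vertex is $\pi_0 a = m$. The equal-degree constraint $\sum_j M_{ij}\pi_j=d$ then forces $M_{01}=b=(d-m)/(1-\alpha)$ and determines $M_{11}=c$ accordingly, with all entries nonnegative as soon as $d>m$.

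For the spectral quantity, a direct computation from $\tr T=1+\lambda_2$ yields $\lambda_2=(m-\alpha d)/(d(1-\alpha))$, which tends to $m/d$ as $\alpha\to 0$. Thus $d\lambda_2^2\to m^2/d<\epsilon$ for all sufficiently small $\alpha$.

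For reconstruction I use the information-theoretic (non-efficient) estimator that exhaustively searches over subsets $S\subset V(G)$ with $|S|=\lfloor\alpha n\rfloor$ and picks $S^*$ maximizing the number of induced edges; the output labeling $\tau$ sets $\tau_v=0$ iff $v\in S^*$. The true class-$0$ set $S_0$ has induced-edge count $X_{S_0}\sim\mathrm{Binomial}(\binom{\alpha n}{2},a/n)$, tightly concentrated around $\alpha n m/2$ since $\alpha nm\to\infty$. Any wrong $S$ with $|S\cap S_0|\le \alpha n/2$ is dominated by class-$1$--class-$1$ pairs with edge probability $\approx d/n$, so $X_S$ has mean at most roughly $\alpha^2 nd/2$, a factor $a/d = m/(\alpha d)$ smaller than $X_{S_0}$. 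A standard Chernoff bound then gives $\Pr(X_S\ge\alpha nm/2)\le\exp\bigl(-\tfrac{\alpha nm}{2}(|\log\alpha|-\log(d/m))(1+o(1))\bigr)$ as $\alpha\to 0$.

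The hard part will be closing a union bound over the $\binom{n}{\alpha n}\le \exp(\alpha n(1+|\log\alpha|))$ candidate subsets. Taking logarithms, what I need is $(\tfrac{m}{2}-1)|\log\alpha|>1+\tfrac{m}{2}\log(d/m)$, which is exactly where the condition $m>2$ enters: for $m>2$ the left side grows linearly in $|\log\alpha|$ while the right side is constant in $\alpha$, so the inequality can be satisfied by taking $\alpha=\epsilon^K$ for $K$ large (depending on $d,m$). Consequently $S^*$ satisfies $|S^*\cap S_0|>\alpha n/2$ with high probability, which a short calculation shows gives overlap at least $\alpha(1-2\alpha)>0$; any $\delta$ below that value witnesses reconstructibility. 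The delicate balance $m>2$ reflects the minimum density gap needed to overcome the combinatorial entropy of size-$\alpha n$ subsets, and is the main obstacle to making the construction go through.
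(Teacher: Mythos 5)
Your setup is sound: the model is admissible, the formula $\lambda_2=(m-\alpha d)/(d(1-\alpha))\to m/d$ is correct, and the overall strategy (exhaustive search over size-$\lfloor\alpha n\rfloor$ sets, counting induced edges, Chernoff plus union bound) is the same one the paper uses. The gap is exactly at the step you flagged as the hard part, and it does not close as written. Your tail bound $\Pr(X_S\ge\alpha nm/2)\le\exp\bigl(-\tfrac{\alpha nm}{2}(|\log\alpha|-\log(d/m))(1+o(1))\bigr)$ is valid only for candidate sets $S$ that are essentially disjoint from $S_0$, because only then is $\E X_S\approx\alpha^2nd/2$. A ``bad'' set with $|S\cap S_0|=\beta\alpha n$ for a constant $\beta\in(0,1/2]$ has
\[
\E X_S \;=\;\frac{\alpha n}{2}\bigl(\beta^2 m+\alpha d\bigr)(1+o(1)),
\]
which is a \emph{constant fraction} of the threshold $\alpha nm/2$ (e.g.\ one quarter of it at $\beta=1/2$). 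The Chernoff exponent for exceeding the threshold is then only $O(\alpha n\, m\,|\log\beta|)$ --- it never acquires the $|\log\alpha|$ factor --- whereas there are $\exp\bigl((1-\beta)\alpha n|\log\alpha|(1+o(1))\bigr)$ such sets. For constant $m$ the union bound therefore fails for every fixed $\beta\in(0,1/2)$, and this is not merely an artifact of the analysis: a first-moment computation shows one can already find $\alpha n/2$ class-$1$ vertices spanning $\sim\alpha n/2$ edges, so gluing them to a denser-than-average half of $S_0$ plausibly produces low-overlap sets beating $S_0$ in edge count. Consequently both the condition $m>2$ and the claimed overlap $\alpha(1-2\alpha)$ are unjustified.

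The paper avoids this in two ways. First, its Lemma~\ref{lem:overlap-correlation} converts ``overlap less than $\delta$'' directly into ``the candidate block has nearly the population composition,'' which pins down the mean edge count of \emph{every} bad candidate uniformly --- there is no intermediate-overlap regime to worry about, since candidates with substantial intersection are acceptable outputs. Second, and more importantly, the paper does not keep the within-community degree constant: it fixes $a=2$ and takes $d\asymp\log(1/p)/p$, so a class-$0$ vertex has $\Theta(\log(1/p))$ expected neighbors inside its own class. This makes the Bernstein exponent $\Theta(dp^2n)=\Theta(nH(p))$, which beats the entropy $nH(p)$ of the search space by a tunable constant, while still giving $d\lambda_2^2\asymp p\log(1/p)\to0$. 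Your construction can be repaired either by letting $m$ grow like $|\log\alpha|$ (which essentially reproduces the paper's parameters), or by keeping $m$ constant but lowering the bad-set cutoff from $|S\cap S_0|\le\alpha n/2$ to $|S\cap S_0|\le\alpha^{1+c}n$ --- for such sets $\E X_S=O(\alpha^{1+\eta}n)$, the $|\log\alpha|$ gain in the exponent reappears, and one settles for the (still positive, still constant) overlap $\delta\approx2\alpha^{1+c}-2\alpha^2$ at the price of requiring $m$ larger than roughly $4/c$ rather than $2$.
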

  The proof is a simple application of Bernstein's inequality and can be found in Section~\ref{sec:rec-belowKS}.
  We remark, however, that our reconstruction algorithm for proving Proposition~\ref{prop:reconstruction-belowKS}
  is not computationally efficient. Indeed, if Decelle et al.'s~\cite{Z:2011} are correct then no
  computationally efficient reconstruction is possible when $d\lambda_2^2 <1$.

 \subsection{Negative results}
 
 At the other extreme of asymptotic orthogonality is contiguity: we say that
 $\P_n$ is asymptotically contiguous to $\Q_n$ if for any sequence of events $\Omega_n$,
 $\Q_n(\Omega_n) \to 1$ implies $\P_n(\Omega_n) \to 1$. We say that $\P_n$
 and $\Q_n$ are mutually asymptotically contiguous if $\P_n$ is asymptotically contiguous to $\Q_n$ and
 $\Q_n$ is asymptotically contiguous to $\P_n$. From the statistical perspective of distinguishing
 $\P_n$ and $\Q_n$, mutual asymptotic contiguity implies that no test could ever be sure whether
 a given sample came from $\P_n$ or $\Q_n$.
 
 In the case $s = 2$, $\pi = (1/2, 1/2)$, Mossel et al.~\cite{MoNeSl:13} gave a converse to
 Theorem~\ref{thm:distinguish}: they showed that if $d \lambda_2^2 < 1$ then $\P_n$ and
 $\Q_n$ are mutually asymptotically contiguous. In the general case, we still lack a sharp converse
 to Theorem~\ref{thm:distinguish} (indeed the proof of Proposition~\ref{prop:reconstruction-belowKS}
 shows that there can not be one). Nevertheless, we give a sufficient condition for
 $\P_n$ and $\Q_n$ to be mutually asymptotically contiguous.
 
 \subsubsection{The uniform integrability condition}
 
 At a crucial point in our analysis, we require that the exponential of a certain
 multinomial quadratic form be uniformly integrable. This is the main place in which our analysis
 differs from the special case considered in~\cite{MoNeSl:13}: in the case of two balanced
 classes, Mossel et al.\ were led to consider $\exp(\lambda Z_n^2)$,
 where $Z_n = 2 n^{-1/2} (\Binom(n, 1/2) - n/2)$. This is a particularly nice special case
 because $\exp(\lambda Z_n^2)$ is uniformly integrable for all $\lambda < 1/2$, which is
 exactly the set of $\lambda$ for which $\E \exp(\lambda Z^2) < \infty$, where $Z \sim \normal(0, 1)$.
 The situation is more complicated for general binomial and multinomial variables, and it
 leads us to the following definitions:
  \begin{definition}\label{def:D}
 Let $\Delta_k$ to denote the simplex in $k$-dimensions,
\begin{align*}
  \Delta_k \defas \set{p \in \R^k:  p_i \geq 0, \sum_{i=1}^k p_i = 1}.
\end{align*}
   Define $D: \Delta_k \times \Delta_k \to \R$ by
  \[
   D(p, q) = \sum_{i=1}^k p_i \log (p_i/q_i).
  \]
 \end{definition}
 
Note that if we interpret $p, q \in \Delta_k$ as probability distributions on a $k$-point
 set, then $D(p, q)$ is exactly the Kullback-Leibler divergence between them.
 
 \begin{definition}
  For $\pi \in \Delta_s$, define
  \[
   \Delta_{s^2}(\pi) \defas \set{
   (p_{ij})_{i,j=1}^s \in \Delta_{s^2}: \sum_{i=1}^s p_{ij} = \pi_j \text{ and } \sum_{j=1}^s p_{ij} = \pi_i
   \text{ for all } i,j
   }.
  \]
  In other words, elements of $\Delta_{s^2}(\pi)$ are probability distributions on $[s]^2$ that have $\pi$ as
  their marginal distributions.
 \end{definition}

 \begin{definition}\label{def:Q}
  For $\pi \in \Delta_s$ and an $s \times s$ matrix $A$, let $p = \pi \otimes \pi$  and define
  \[
   Q(\pi, A) = \sup_{\alpha \in \Delta_{s^2}(\pi)} \frac{(\alpha - p)^T (A \otimes A) (\alpha - p)}{D(\alpha, p)}.
  \]
 \end{definition}
 
 The preceding definition may not seem well-motivated, but we will show that $Q(\pi, A) < 1$ is exactly the
 right condition for a certain exponentiated quadratic form involving $A$ to be uniformly integrable. Moreover, although
 we do not know any simple algebraic expression for $Q$, one can easily compute numerical approximations;
 see Figure~\ref{fig:unbalanced2clusters}.
 
 \subsubsection{Non-distinguishability}
 \begin{theorem}\label{thm:non-distinguish}
  Let $\P_n = \calG(n, M/n, \pi)$ and $\Q_n = \calG(n, d/n)$, where $d = \sum_{j} M_{ij} \pi_j$.
  Define $A = M - d \1\1^T$.
  If $Q(\pi, A/\sqrt{2d}) < 1$ then $\P_n$ and $\Q_n$ are mutually contiguous.
 \end{theorem}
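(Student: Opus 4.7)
The plan is to establish mutual contiguity via a second-moment argument combined with the small-subgraph conditioning method, following the template that Mossel et al.\ used for the two-balanced-cluster case but adapted to the general multinomial setting. Let $L_n \defas d\P_n/d\Q_n$. Summing over the hidden labels in the density formula and using the independence of edges under $\Q_n$, one computes
\[
\E_{\Q_n}[L_n^2] = \sum_{\sigma,\tau \in [s]^n} \pi(\sigma) \pi(\tau) \prod_{u < v} \left[\frac{d}{n} \cdot \frac{M_{\sigma_u \sigma_v} M_{\tau_u \tau_v}}{d^2} + \left(1 - \frac d n\right) \cdot \frac{(1 - M_{\sigma_u \sigma_v}/n)(1 - M_{\tau_u \tau_v}/n)}{(1 - d/n)^2}\right].
\]
A direct expansion in $1/n$ simplifies each bracket to $1 + \frac{A_{\sigma_u \sigma_v} A_{\tau_u \tau_v}}{dn} + O(1/n^2)$ where $A = M - d \1 \1^T$, so the product is approximately $\exp\!\bigl(\frac{1}{2dn} \sum_{u,v} A_{\sigma_u \sigma_v} A_{\tau_u \tau_v}\bigr)$.

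The next step is to re-express this in terms of the joint empirical type $\alpha_{ij} \defas \frac{1}{n}\#\{u : \sigma_u = i, \tau_u = j\}$ of the pair $(\sigma, \tau)$. A routine index bookkeeping gives $\sum_{u,v} A_{\sigma_u \sigma_v} A_{\tau_u \tau_v} = n^2 \alpha^T (A \otimes A) \alpha$, and since $A \pi = 0$ (by the balanced-degree assumption) we have $(A \otimes A)(\pi \otimes \pi) = 0$, hence $\alpha^T (A \otimes A) \alpha = (\alpha - p)^T(A \otimes A)(\alpha - p)$ with $p = \pi \otimes \pi$. Now $\alpha$ is, under the measure induced by $\sigma, \tau \sim \pi^n$, the empirical distribution of $n$ i.i.d.\ samples from $p$, so by Sanov-type bounds the probability of a given type is roughly $\exp(-n D(\alpha, p))$. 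Thus
\[
\E_{\Q_n}[L_n^2] \approx \sum_{\alpha \in \Delta_{s^2}(\pi)} \exp\!\left(-n\left[D(\alpha, p) - \frac{1}{2d}(\alpha - p)^T(A \otimes A)(\alpha - p)\right]\right),
\]
and the hypothesis $Q(\pi, A/\sqrt{2d}) < 1$ says precisely that the bracketed exponent is bounded below by $(1 - Q) D(\alpha, p) \ge 0$, uniformly over $\alpha$. This gives $\E_{\Q_n}[L_n^2] = O(1)$, which by Cauchy-Schwarz yields the one-sided contiguity $\P_n \lhd \Q_n$.

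For the reverse contiguity $\Q_n \lhd \P_n$, I would invoke Janson's small-subgraph conditioning method, using Proposition~\ref{prop:cycle-count}: condition on the cycle counts $(X_3, \ldots, X_K)$, show that under this conditioning the second moment converges to $1$ as first $n \to \infty$ and then $K \to \infty$, and identify $\E[L_n \mid X_3, \ldots, X_K]$ as an explicit (almost surely positive) limit functional of the asymptotically independent Poisson cycle counts. Together with the global second-moment bound, this standard machinery upgrades one-sided contiguity to mutual contiguity.

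The main technical obstacle is making the step ``$\E_{\Q_n}[L_n^2] \approx \sum_\alpha \exp(\ldots)$'' rigorous as a uniform integrability statement rather than a mere pointwise LDP heuristic. The issue is that an exponentiated quadratic form of a multinomial variable need not be uniformly integrable merely because the corresponding Gaussian limit is integrable: extreme types near the boundary of $\Delta_{s^2}(\pi)$ can blow up the expectation even when local quadratic tails are fine. The definition of $Q$ is designed to rule this out by demanding the ratio $(\alpha - p)^T (A \otimes A)(\alpha - p) / D(\alpha, p)$ stay below $2d$ globally on the simplex, not just in a neighborhood of $p$; verifying that this global bound translates cleanly into a uniform-in-$n$ bound on the multinomial expectation, and handling the $O(1/n^2)$ corrections to the log-product uniformly over $\alpha$ (including near the boundary where some $\alpha_{ij} = 0$), is the delicate part of the argument.
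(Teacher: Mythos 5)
Your overall architecture---a second moment over pairs of labelings, reduced to an exponentiated quadratic form in the joint empirical type, combined with Robinson--Wormald/Janson small-subgraph conditioning on cycle counts---is the same as the paper's. But there are two genuine gaps. First, your restriction of the type sum to $\Delta_{s^2}(\pi)$ is unjustified: the joint empirical type of two independent labelings drawn from $\pi^{\otimes n}$ ranges over (a lattice inside) all of $\Delta_{s^2}$, while $Q(\pi, A/\sqrt{2d})$ is a supremum only over $\Delta_{s^2}(\pi)$, so the hypothesis gives no control over the contribution of unbalanced types. This is not a technicality the definition of $Q$ ``is designed to rule out'': the paper explicitly notes that the \emph{unconditioned} second moment $\E_{\Q_n}(d\P_n/d\Q_n)^2$ can diverge even when $Q(\pi, A/\sqrt{2d}) < 1$, because the integrand explodes on the rare event that $\sigma$ is very unbalanced. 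The fix is to first condition $\P_n$ on the event $\Omega_n$ that every label frequency is within $a_n = o(n)$ of its mean (forcing the type into an $o(1)$-neighborhood of $\Delta_{s^2}(\pi)$), prove mutual contiguity of $\tilde\P_n = (\P_n \mid \Omega_n)$ with $\Q_n$, and then transfer back to $\P_n$ using $\P_n(\Omega_n) \to 1$.

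Second, for the small-subgraph conditioning step a bound $\E_{\Q_n}[L_n^2] = O(1)$ is not sufficient: the method (Theorem~\ref{thm:conditional-second-mom}) requires the second moment to converge to \emph{exactly} $\exp\bigl(\sum_{k \ge 3} \mu_k \delta_k^2\bigr) = \prod_{i,j=2}^s \psi(d\lambda_i\lambda_j)$ with $\mu_k = \frac{1}{2k}d^k$ and $\delta_k = \tr(T^k) - 1$, so that the cycle counts account for all of the variance. Your lower bound on the exponent by $(1-Q)D(\alpha,p) \ge 0$ only shows each term is at most $1$ (and summing over the polynomially many types would give $O(n^{s^2})$ without the strong-convexity and stratification argument of Lemma~\ref{lem:UI-multinomial}); it does not identify the limit. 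The paper obtains the exact constant by proving uniform integrability of the exponentiated quadratic form (Proposition~\ref{prop:ui}), passing to the Gaussian limit of the normalized type (Lemma~\ref{lem:gaussian-quadratic-limit}), and tracking the correction terms $\nu_1, \nu_2$ that supply the factors $e^{-x/2 - x^2/4}$ in $\psi$. You correctly flagged uniform integrability as the delicate point, but the identification of the limiting constant is an essential, not optional, part of upgrading the one-sided bound to mutual contiguity.
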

 
 For comparison with Theorem~\ref{thm:distinguish}, note that $Q(\pi, A/\sqrt{2d}) < 1$
 implies that $\lambda_2^2 d < 1$. This comes from comparing the second derivatives in
 the numerator and denominator of $Q$: if $Q < 1$ then the Hessian of the numerator must
 be smaller (in the semidefinite order) than the denominator, and this turns out to be equivalent
 to $\lambda_2^2 d < 1$.
 
 We remark that while $Q(\pi, A/\sqrt{2d}) < 1$ is only a sufficient condition for the contiguity
 of $\P_n$ and $\Q_n$, it is actually a sharp condition for a certain second moment to exist:
 \begin{theorem}\label{thm:second-moment}
 Fix a sequence $a_n$ with $a_n = o(n)$ and $a_n = \omega(\sqrt n)$. Let $\Omega_n$
 be the event that for all $i \in [s]$, $|\set{u : \sigma_u = i}| = n\pi_i \pm a_n$.
 With the notation of Theorem~\ref{thm:non-distinguish},
 take $\tilde \P_n$ to be $\P_n$ conditioned on $\Omega$.
 If $Q(\pi, A/\sqrt{2d}) < 1$ then
 \begin{equation}\label{eq:second-moment}
\lim_{n\to\infty} \E_{\Q_n} \left(\diff{\tilde \P_n}{\Q_n}\right)^2 = (1 + o(1)) \prod_{i,j=2}^s \psi(d\lambda_i \lambda_j)
< \infty,
 \end{equation}
 where $\psi(x) = (1-x)^{-1/2} e^{-x/2-x^2/4}$.
 On the other hand, if $Q(\pi, A/\sqrt{2d}) > 1$ then
   \[\lim_{n\to\infty} \E_{\Q_n} \left(\diff{\tilde \P_n}{\Q_n}\right)^2 = \infty.\]
 \end{theorem}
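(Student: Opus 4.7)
The strategy is to express the second moment as the expectation of an exponentiated quadratic form in the joint empirical distribution of two independent labelings, then analyze it by combining a local Gaussian computation near the mean with a large-deviation bound in the tail. The threshold between convergence and divergence is controlled exactly by $Q(\pi, A/\sqrt{2d})$.

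Writing $d\wt\P_n/d\Q_n(G) = \E_{\sigma \mid \Omega_n}[L_n(G,\sigma)]$, where $L_n(G, \sigma) := \prod_{u<v}(M_{\sigma_u\sigma_v}/d)^{X_{uv}}((1-M_{\sigma_u\sigma_v}/n)/(1-d/n))^{1-X_{uv}}$, and introducing an independent copy $\tau$, one has
\[
\E_{\Q_n}\left(\frac{d\wt\P_n}{d\Q_n}\right)^{\!2} = \E_{\sigma,\tau \mid \Omega_n}\prod_{u<v}\E_{X \sim \Ber(d/n)}[\cdots].
\]
A direct Taylor expansion of the per-pair expectation in $1/n$ yields $1 + A_{\sigma_u\sigma_v}A_{\tau_u\tau_v}/(nd) + O(1/n^2)$ with $A := M - d\1\1^T$. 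Writing $\alpha_{ij} := N_{ij}(\sigma,\tau)/n$ and $p := \pi \otimes \pi$, the identity $A\pi = 0$ makes the quadratic form in $\alpha$ and its gradient both vanish at $p$, so summing the logarithm gives
\[
\log \prod_{u<v}\E_X[\cdots] = n\, f(\alpha) + R_n(\sigma, \tau), \quad f(\alpha) := \tfrac{1}{2d}(\alpha - p)^T(A \otimes A)(\alpha - p),
\]
with $R_n$ absorbing cubic-and-higher corrections. Since $a_n = \omega(\sqrt n)$, the CLT gives $\Pr(\Omega_n)^{-2} = 1 + o(1)$.

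Assume $Q(\pi, A/\sqrt{2d}) < 1$ and split the outer expectation by $\|\alpha - p\|$. On $\{\|\alpha - p\| \ge \delta\}$, Sanov's theorem bounds the multinomial mass near $\alpha_0$ by $\exp(-nD(\alpha_0, p)(1 + o(1)))$, while by definition of $Q$ the integrand is at most $\exp(Q n D(\alpha, p))$; this region therefore contributes at most $\exp(-(1 - Q) n c) \to 0$. On the local window (further truncated to $\|\alpha - p\| \le n^{-1/2 + \eta}$ so $R_n$ is bounded and concentrates on its deterministic limit), a local CLT says $\sqrt n(\alpha - p)$ converges to a centered Gaussian on $T_p\Delta_{s^2}(\pi)$ with inverse covariance $\diag(1/p) = \Hess D(\cdot,p)|_p$. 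Simultaneously diagonalizing this Hessian and $(A \otimes A)/d$ on $T_p\Delta_{s^2}(\pi)$ produces generalized eigenvalues $d\lambda_i\lambda_j$ for $i, j \ge 2$ (the Perron modes of $T$ are killed by the marginal constraints defining $T_p\Delta_{s^2}(\pi)$), so the bare Gaussian integral contributes $\prod_{i,j\ge 2}(1 - d\lambda_i\lambda_j)^{-1/2}$; the additional $e^{-x/2-x^2/4}$ factor in $\psi(x)$ comes from the deterministic limit of $R_n$. A cleaner identification of the full prefactor uses Proposition~\ref{prop:cycle-count}: since $(X_k)_{k \ge 3}$ are asymptotically independent $\Pois(d^k/(2k))$ under $\Q_n$, a direct Poisson second-moment computation gives $\E_{\Q_n}[L_{\text{short}}^2] = \prod_{k \ge 3}\exp(d^k(\tr T^k - 1)^2/(2k))$ for the short-cycle likelihood ratio, and combining $\tr T^k - 1 = \sum_{i \ge 2}\lambda_i^k$ with $\sum_{k\ge 3} x^k/(2k) = -\tfrac12\log(1-x) - x/2 - x^2/4$ reassembles $\prod_{i,j\ge 2}\psi(d\lambda_i\lambda_j)$. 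That $L_n \approx L_{\text{short}}$ in $L^2(\Q_n)$ is itself a consequence of the tail control above, where the uniform integrability provided by $Q < 1$ is used.

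Conversely, if $Q(\pi, A/\sqrt{2d}) > 1$, pick $\alpha_* \in \Delta_{s^2}(\pi)$ with $f(\alpha_*) > D(\alpha_*, p)$. Multinomial large deviations give $\Pr(\alpha \in U) \ge \exp(-nD(\alpha_*, p)(1 + o(1)))$ for any open $U \ni \alpha_*$, and by continuity $\inf_{\alpha \in U} f(\alpha) > D(\alpha_*, p)$ for $U$ small enough, so the second moment is at least $\exp(nc') \to \infty$ for some $c' > 0$. The main technical obstacle lies in the convergent case: a bare Gaussian integral only captures $(1 - d\lambda_i\lambda_j)^{-1/2}$, and recovering the full $\psi$ factor requires either a careful direct expansion of $R_n$ (tracking $\log(1+y)$ and the per-pair $O(1/n^2)$ remainder to cubic order, summed over $\Theta(n^2)$ pairs) or the short-cycle route, whose main work is justifying the $L^2(\Q_n)$ replacement $L_n \approx L_{\text{short}}$ — both avenues ultimately relying on the same $Q < 1$ hypothesis.
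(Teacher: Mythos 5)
Your proposal is correct and follows essentially the same route as the paper: expand the per-pair expectation, reduce to an exponentiated quadratic form in the centered empirical pair-counts, control the tail via multinomial large deviations against the definition of $Q$ (this is exactly the paper's Proposition~\ref{prop:UI-multinomial}), take the Gaussian limit to get $\prod_{i,j\ge 2}(1-d\lambda_i\lambda_j)^{-1/2}$, and exhibit a single lattice point for divergence when $Q>1$. The only point where you hedge --- recovering the $e^{-x/2-x^2/4}$ part of $\psi$ --- is handled in the paper by the direct expansion you describe: the corrections $\nu_1,\nu_2$ turn out to be deterministic constants equal to $-\tfrac d2\tr(B)^2$ and $-\tfrac{d^2}4\tr(B^2)^2$, so no concentration argument or short-cycle detour is needed there.
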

  
  In fact, Theorem~\ref{thm:second-moment} is the important technical step in the proof
  of Theorem~\ref{thm:non-distinguish}; it is easy to see that~\eqref{eq:second-moment}
  implies that $\P_n$ is asymptotically contiguous to $\Q_n$; the other direction
  (i.e., $\Q_n$ is asymptotically contiguous to $\P_n$) follows from
  a conditional second-moment argument, of which~\eqref{eq:second-moment} is the
  most challenging step.
  
  We remark that one can also prove a version of Theorem~\ref{thm:second-moment} without conditioning
  on $\Omega_n$; however, one would need to replace $\Delta_{s^2}(\pi)$ in Definition~\ref{def:Q} by
  the larger set $\Delta_{s^2}$. This turns out to increase $Q$, and therefore gives a weaker result.
  In other words, there is a regime in which
  \[
    \E_{\Q_n} \left(\diff{\P_n}{\Q_n}\right)^2
  \]
  tends to infinity, but only because the integrand explodes on the rare event that the labelling $\sigma$
  is very unbalanced.
  
  \subsubsection{Non-reconstructability}  
  The following theorem is our main result on non-reconstructability.
  \begin{theorem}\label{thm:non-reconstruct}
   With the notation of Theorem~\ref{thm:non-distinguish}, if $Q(\pi, A/\sqrt{2d}) < 1$ then
   $\P_n$ is not reconstructable.
  \end{theorem}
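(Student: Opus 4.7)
The plan is to combine the finite second moment from Theorem~\ref{thm:second-moment} with a two-copy posterior-sampling reduction, using a clean Cauchy--Schwarz identity to control posterior probabilities by their annealed counterparts. It suffices to work with $\tilde\P_n$ since $\P_n(\Omega_n)\to 1$. Let $\mu_n$ be the law of $(G,\sigma^{(1)},\sigma^{(2)})$ with $G\sim\tilde\P_n$ and $\sigma^{(1)},\sigma^{(2)}\sim\tilde\P_n(\cdot\mid G)$ conditionally independent given $G$, and let $\alpha=\alpha(\sigma^{(1)},\sigma^{(2)})\in\Delta_{s^2}$ be their joint empirical type on $[s]^2$; write $p:=\pi\otimes\pi$. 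By vertex exchangeability $\E_{\mu_n}[\alpha_{ii}]=\E_{G\sim\tilde\P_n}[\tilde\P_n(\sigma_v=i\mid G)^2]$ for every vertex $v$, and on $\Omega_n\times\Omega_n$ the overlap equals $\max_\rho\sum_i(\alpha_{i\rho(i)}-p_{i\rho(i)})+o(1)$. Concentration of $\alpha$ at $p$ under $\mu_n$ therefore forces the posterior marginals $\tilde\P_n(\sigma_v=\cdot\mid G)$ to collapse to $\pi$ in $L^2$ and, by Bayes optimality, rules out any algorithm achieving non-trivial overlap with the planted labelling. The theorem thus reduces to showing $\mu_n\{\alpha\notin B_\eta(p)\}\to 0$ for every $\eta>0$.

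\emph{Cauchy--Schwarz against an annealed measure.} Introduce the probability measure $\bar\P_n$ on $(G,\sigma^{(1)},\sigma^{(2)})$ with density proportional to $\pi(\sigma^{(1)})\pi(\sigma^{(2)})\P_n(G\mid\sigma^{(1)})\P_n(G\mid\sigma^{(2)})/\Q_n(G)$; its normalizing constant is exactly $Z_n=\E_{\Q_n}[(\diff{\tilde\P_n}{\Q_n})^2]$, which is $O(1)$ by Theorem~\ref{thm:second-moment}. Writing $L(G)=\diff{\tilde\P_n}{\Q_n}$, two quick direct calculations give
\[
\frac{d\mu_n}{d\bar\P_n}=\frac{Z_n}{L(G)}\qquad\text{and}\qquad\E_{\bar\P_n}[L^{-2}]=\frac{1}{Z_n}
\]
(the second from the marginal identity $\bar\P_n(G)=\Q_n(G)L(G)^2/Z_n$). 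Cauchy--Schwarz on the first identity then yields the key inequality $\mu_n(E)=Z_n\,\E_{\bar\P_n}[\1_E\,L^{-1}]\le\sqrt{Z_n\,\bar\P_n(E)}$ for every event $E$, so the problem is reduced to showing $\bar\P_n\{\alpha\notin B_\eta(p)\}$ decays exponentially in $n$.

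\emph{Concentration under $\bar\P_n$ via Sanov.} The $(\sigma^{(1)},\sigma^{(2)})$-marginal of $\bar\P_n$ has density $K(\sigma^{(1)},\sigma^{(2)})/Z_n$ with respect to $\pi^{\otimes n}\otimes\pi^{\otimes n}$, where $K=\E_{\Q_n}[L_1L_2]$ and $L_i=\P_n(G\mid\sigma^{(i)})/\Q_n(G)$. The same edgewise independence computation that underlies Theorem~\ref{thm:second-moment} shows that $K$ depends on $(\sigma^{(1)},\sigma^{(2)})$ only through the joint type $\alpha$ and
\[
K=\exp\!\left(\tfrac{n(1+o(1))}{2d}(\alpha-p)^{\!T}(A\otimes A)(\alpha-p)\right),
\]
the linear contributions vanishing because $A\pi=0$. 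Combined with the Sanov estimate $\Pr[\mathrm{type}=\alpha]\asymp\exp(-nD(\alpha,p))$ under $\pi^{\otimes 2n}|_{\Omega_n\times\Omega_n}$, the joint type under $\bar\P_n$ has large-deviation rate
\[
I(\alpha)=D(\alpha,p)-\tfrac{1}{2d}(\alpha-p)^{\!T}(A\otimes A)(\alpha-p),\qquad\alpha\in\Delta_{s^2}(\pi),
\]
which by the very definition of $Q(\pi,A/\sqrt{2d})$ satisfies $I(\alpha)\ge(1-Q(\pi,A/\sqrt{2d}))D(\alpha,p)$; in particular $I$ is strictly positive for $\alpha\ne p$ under our assumption. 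A Laplace argument then gives $\bar\P_n\{\alpha\notin B_\eta(p)\}\le e^{-c(\eta)n}$, and feeding this into the Cauchy--Schwarz bound completes the proof. The hardest step I anticipate is making the Laplace estimate uniform near $p$, where $I$ vanishes quadratically: one needs its Hessian, restricted to the tangent space of $\Delta_{s^2}(\pi)$, to be strictly positive definite, and this is precisely where the restriction of the supremum in the definition of $Q$ to $\Delta_{s^2}(\pi)$ (enforced by conditioning on $\Omega_n$) becomes essential.
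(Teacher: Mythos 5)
Your argument is essentially correct but organized quite differently from the paper. The paper first proves a local statement (Proposition~\ref{prop:TVdistance}): for any \emph{fixed} finite set of vertices, the conditional laws of $G$ given two different label configurations on that set are asymptotically indistinguishable in total variation; this is obtained by Cauchy--Schwarz against $\Q_n$ followed by the same type-expansion of $\E_{\Q_n}[L_1L_2]$ that you use, and non-reconstruction then follows from a second-moment computation on $N_{ii}-\tfrac1n N_i(\sigma)N_i(\calA(G))$ using the resulting posterior decoupling (Lemma~\ref{lem:condTV}). You instead prove a global statement -- concentration of the two-replica overlap under the Nishimori measure $\mu_n$ -- via the change of measure to $\bar\P_n$ and the inequality $\mu_n(E)\le\sqrt{Z_n\,\bar\P_n(E)}$, whose ingredients ($d\mu_n/d\bar\P_n=Z_n/L$, $\E_{\bar\P_n}[L^{-2}]=Z_n^{-1}$) I have checked and which are correct. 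Both routes rest on the identical core facts: $\E_{\Q_n}[L_1L_2]\asymp\exp\bigl(\tfrac n{2d}(\alpha-p)^T(A\otimes A)(\alpha-p)\bigr)$ and $D(\alpha,p)\ge\tfrac1{2d}(\alpha-p)^T(A\otimes A)(\alpha-p)$ for $\alpha\in\Delta_{s^2}(\pi)$, which is exactly $Q<1$. Your version arguably buys a cleaner quantitative statement (exponential decay of the annealed overlap tail, hence $O(e^{-cn/2})$ for the quenched one) and makes the role of $Q$ more transparent; the paper's version is more elementary in that it never needs the replica measure. Note also that your Laplace-uniformity worry near $p$ is moot: for the upper bound on $\bar\P_n\{\alpha\notin B_\eta(p)\}$ you only need $I$ bounded below \emph{away} from $p$, which $(1-Q)\inf_{\alpha\notin B_\eta(p)}D(\alpha,p)>0$ already gives.

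The one step you should not leave to the phrase ``by Bayes optimality'' is the passage from overlap concentration to non-reconstructability. Collapse of the \emph{one-point} posterior marginals $\tilde\P_n(\sigma_v=\cdot\mid G)$ to $\pi$ is not by itself enough: to bound $\E\bigl|N_{i\rho(i)}(\sigma,\calA(G))-\tfrac1n N_i(\sigma)N_{\rho(i)}(\calA(G))\bigr|$ one squares and meets the two-point functions $\P(\sigma_u=i,\sigma_v=i\mid G)$ for $u\ne v$, which must also concentrate at $\pi_i^2$ (this is precisely what the paper extracts from Lemma~\ref{lem:condTV} with $S=\{v\}$). Fortunately your own concentration statement delivers this: since $\alpha_{ii}\to\pi_i^2$ in $\mu_n$-probability and $\alpha_{ii}$ is bounded, $\E_{\mu_n}[\alpha_{ii}^2]\to\pi_i^4$, and by vertex exchangeability $\E_{\mu_n}[\alpha_{ii}^2]=\tfrac1{n^2}\sum_{u,v}\E_G[g_{uv}^2]+o(1)$ with $g_{uv}=\tilde\P_n(\sigma_u=i,\sigma_v=i\mid G)$, while $\E_G[g_{uv}]\to\pi_i^2$; hence $\Var(g_{uv})\to0$ and the two-point functions collapse in $L^2$. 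With that paragraph added, your proof is complete.
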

  Existing results on non-reconstructability of the balanced two cluster model have been obtained by reducing the
  problem to one of non-reconstructability on trees \cite{MoNeSl:13}. However, finding the non-reconstructable
  region of trees
  in the more general case has been a long standing open problem.
  Instead, we obtain Theorem~\ref{thm:non-reconstruct} by showing a connection between
  distinguishability and reconstructability.
  Intuitively, detecting communities in $G$ seems harder than merely distinguishing $G \sim \P_n$
  from $G \sim \Q_n$; however, it is not known whether Theorem~\ref{thm:non-distinguish}
  implies Theorem~\ref{thm:non-reconstruct}. Instead, we give a reduction from Theorem~\ref{thm:second-moment}:
  we show that the condition~\eqref{eq:second-moment} implies non-reconstructability.

  \subsection{Numerical results}

  We present some numerical description of Theorem~\ref{thm:non-distinguish}'s uniform integrability condition in
  the case of two clusters with unequal sizes.
  In the case of two clusters, for a fixed probability vector $\pi$,
  the uniform integrability condition
  turns out to be just a threshold on $\lambda_2^2 d$. To see this, we first
  note that the matrix $A$ is rank-$1$ and hence, so is $A^{\otimes 2}$.
  Moreover, fixing $\pi$ also fixes the eigenvector of $A$, and so it fixes the numerator
  of $Q$ up to a scaling.
  On the other hand, for a fixed $\pi$, the denominator of $Q$ is a function only of $\alpha$.
  So, we see that:
  \begin{align*}
  Q(\pi, A/\sqrt{2d}) = c(\pi) \lambda_2^2 d \sup_{\alpha \in \Delta_{s^2}(\pi)} \frac{\abs{\trans{(\pi^{\otimes 2} - \alpha)} a^{\otimes 2} }^2}{2D(\alpha, \pi^{\otimes 2})},
  \end{align*}
  where $a$ is the unit-eigenvector of $A$.
  Figure~\ref{fig:unbalanced2clusters} shows how the threshold on $\lambda_2^2 d$ varies with $\pi$.
  \begin{figure}[th]
  \centering  \includegraphics[width=0.7\textwidth]{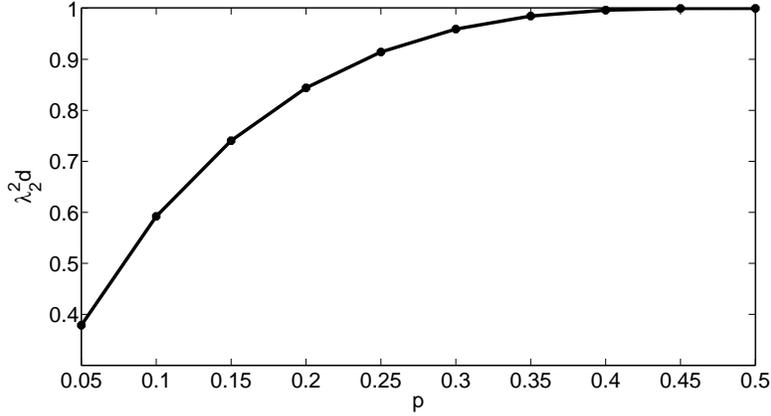}
  \caption{This plot shows how the threshold on $\lambda_2^2 d$ from the uniform integrability condition varies with the probability
  vector $\pi$ in the two cluster case. The x-axis shows $p$ where 
  $\pi = [p,1-p]$ and the y-axis shows the value of the threshold
  computed using numerical optimization. The plot shows that our bound is tight as the clusters become balanced (where the
  threshold is close to $1$). However, the threshold decreases as the clusters get more unbalanced.}
  \label{fig:unbalanced2clusters}
  \end{figure}

\subsection{An example showing looseness}\label{sec:intro-examples}
Figure~\ref{fig:unbalanced2clusters} shows that as the clusters become very unbalanced (i.e., $p \rightarrow 0$),
Theorem~\ref{thm:non-reconstruct} only guarantees non-reconstructability when $d \lambda_2^2$ is very small.
One might ask whether the true reconstructability threshold has this behavior.
Proposition~\ref{prop:reconstruction-belowKS} shows that it does for some models.
We now present another family of examples for which the behavior of the threshold is quite different:
\begin{proposition}\label{prop:ex1}
Consider the block model given by
\begin{align*}
 M = d \begin{pmatrix}
	  \frac{1}{p} & 0 \\
	  0 & \frac{1}{1-p}
       \end{pmatrix}.
\end{align*}
If $d < 1$ then the above model is not reconstructable.
\end{proposition}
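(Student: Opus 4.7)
The proof exploits the diagonal structure of $M$. Since $M_{01}=M_{10}=0$, a graph $G \sim \P_n$ has no edges crossing between the two classes, so $\sigma$ must be constant on every connected component of $G$. Conditional on the class sizes $N_i = |\{v:\sigma_v = i\}|$ (which concentrate at $np$ and $n(1-p)$), the induced subgraph on class $i$ is an Erd\H{o}s-R\'enyi graph $G(N_i, d/(n\pi_i))$ of mean degree exactly $d$. Since $d<1$, both induced subgraphs are subcritical: every component has size $O(\log n)$ with high probability, and Proposition~\ref{prop:cycle-count} applied within each class implies that the total number of vertices $|V_\mathrm{NT}|$ lying in non-tree components satisfies $\E|V_\mathrm{NT}| = O(1)$, so $|V_\mathrm{NT}|/n \to 0$ in probability.

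Next I would compute the posterior $\P_n(\sigma\mid G)$. Writing $a_i = \sum_{C:\sigma(C)=i}|V(C)|$ and $E_i = \sum_{C:\sigma(C)=i}|E(C)|$, the log-likelihood of $(\sigma,G)$ admits an explicit expression; flipping the label of a single component $C$ from $0$ to $1$ changes it by $-(|V(C)|-|E(C)|)\log(p/(1-p))$ plus a cross-component correction of order $|V(C)|\bigl(a_0 M_{00}/(pn) - a_1 M_{11}/((1-p)n)\bigr)$. Because $\pi_0 M_{00} = \pi_1 M_{11} = d$, this correction vanishes to leading order, giving
\[
 \log \frac{\P_n(\sigma(C) = 0 \mid G)}{\P_n(\sigma(C) = 1 \mid G)} = (|V(C)| - |E(C)|) \log \frac{p}{1-p} + o(1).
\]
For a tree component the right-hand side equals the prior log-odds $\log(p/(1-p))$. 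Hence, conditional on $G$, the class labels of the tree components are i.i.d.\ Bernoulli$(p)$, with joint law depending only on the component structure of $G$.

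Finally I would deduce non-reconstructability. Fix any algorithm $\mathcal{A}$ and set $\tau = \mathcal{A}(G)$. For $s=2$ a direct calculation yields $\overlap(\sigma,\tau) = 2 \bigl|N_{00}(\sigma,\tau)/n - N_0(\sigma) N_0(\tau)/n^2\bigr|$. Split the counts into tree and non-tree contributions. The non-tree contribution to the overlap is bounded by $|V_\mathrm{NT}|/n$, which tends to $0$ in probability. For the tree contribution, the conditional independence of $\sigma$ restricted to tree vertices from $\tau$, given $G$, produces a conditional mean of order $|V_\mathrm{NT}|/n$ and a conditional variance $O\bigl(\sum_{C\text{ tree}} |V(C)|^2/n^2\bigr) = O(1/n)$, using that $\sum_C |V(C)|^2 = O(n)$ in a subcritical Erd\H{o}s-R\'enyi graph. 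Therefore $\overlap(\sigma,\tau) \to 0$ in probability, so $\Pr(\overlap > \delta) \to 0$ for every $\delta > 0$ and every algorithm $\mathcal{A}$.

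The main obstacle is the factorization in the second step: the cross-component non-edges contribute a coupling factor $(1-M_{ii}/n)^{\binom{a_i}{2} - E_i}$ which links the labels of different components through the totals $a_0,a_1$, so the posterior does not factorize over components a priori. Approximate factorization emerges only after expanding $\log(1-M_{ii}/n) \approx -M_{ii}/n$ and using the equal-expected-degree hypothesis $\pi_0 M_{00} = \pi_1 M_{11} = d$ to cancel the leading linear-in-$a_i$ corrections; the remaining $O(|V(C)|^2/n)$ remainders per component are negligible because $|V(C)| = O(\log n)$ uniformly in the subcritical regime.
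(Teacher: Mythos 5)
Your proof is essentially correct but follows a genuinely different route from the paper's. The paper argues \emph{locally}: it invokes the separation lemma from Mossel et al.\ (Lemma~\ref{lem:separation}) with $B=\emptyset$ to decouple the component $A$ of a fixed vertex $u$ from the rest, and then shows $TV\big(\prob{G_A\mid\sigma_u=1},\prob{G_A\mid\sigma_u=2}\big)\to 0$ because both conditional laws of $G_A$ are subcritical Erd\H{o}s--R\'enyi components with the same mean degree $d$ and hence the same local limit. This verifies the conclusion of Lemma~\ref{lem:condTV} directly, after which the generic overlap computation from the proof of Theorem~\ref{thm:non-reconstruct} finishes the job. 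You instead argue \emph{globally}: you write down the full posterior $\prob{\sigma\mid G}$, observe that the within-component terms contribute log-odds $(|V(C)|-|E(C)|)\log(p/(1-p))$ which reduce to the prior on tree components, and then run a conditional mean/variance argument on $N_{00}-N_0(\sigma)N_0(\tau)/n$ using $\sum_C|V(C)|^2=O(n)$. Your route is self-contained (it does not import the separation lemma) and gives more information about the posterior; the paper's route is shorter because it only needs single-vertex marginals and reuses machinery already built for Theorem~\ref{thm:non-reconstruct}.

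One point in your argument deserves more care. The cross-component non-edge factor $\prod_i(1-M_{ii}/n)^{\binom{a_i}{2}-E_i}$ contributes, after the linear-in-$a_i$ terms cancel by the equal-degree hypothesis, a residual tilt of the form $\exp\big(-\tfrac{d}{2n}(x_0^2/p+x_1^2/(1-p))\big)$ with $x_i=a_i-n\pi_i$. Since $x_0$ is typically of order $\sqrt{n}$ under the product measure, this tilt is $\Theta(1)$ in aggregate, not $o(1)$: summing your per-component $O(|V(C)|^2/n)$ remainders over all components gives $O(1)$. So the joint conditional law of the tree-component labels is \emph{not} asymptotically i.i.d.\ Bernoulli$(p)$ in total variation; only the single-component conditional marginals are. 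This does not break the proof: the tilted law has Radon--Nikodym density bounded above with respect to the product law (the tilt is at most $1$ and the normalizer is bounded below), so the conditional variance bound transfers with a constant, and the conditional mean of $N_{00}$ can shift by at most $O(\sqrt{n})=o(n)$. But the step should be stated as a bounded-density perturbation of the product measure rather than as exact (or asymptotic) independence.
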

Note that the above block model has $\lambda_2 = 1$. Hence Proposition~\ref{prop:ex1} shows that there exist arbitrarily
unbalanced models, where we can not reconstruct for $d \lambda_2^2 < 1$.

The main idea behind Proposition~\ref{prop:ex1} is that for $d<1$, the largest component is of size $\bigoh{\log n}$.
Even if we can reconstruct the labels of nodes in the same cluster very well, we can not predict the labels of nodes in
different clusters better than random guessing.
We use a lemma from Mossel et al. \cite{MoNeSl:13} that captures this intuition.

\section{Reconstruction below the Kesten-Stigum bound}\label{sec:rec-belowKS}
In this section, we prove Proposition~\ref{prop:reconstruction-belowKS}.
We consider a $2$-cluster model with

\begin{center}
\begin{tabular}{cc}
  $M \defas d \begin{pmatrix}
  a & b \\
  b & c
\end{pmatrix},$ & $\pi \defas \trans{[p \;1-p]}$,
\end{tabular}
\end{center}
with $pa+(1-p)b = pb+(1-p)c = 1$. The second eigenvalue of $T$ turns out to be
\begin{align*}
  \lambda_2 = \frac{(a-1) p}{1-p}.
\end{align*}

Our first lemma shows that if two assignments have very small overlap, then they are almost uncorrelated.
\begin{lemma}\label{lem:overlap-correlation}
Suppose $\sigma$ and $\tau$ are $(pn,(1-p)n)$ and $(qn,(1-q)n)$ partitions respectively.
Define
\begin{align*}
  p_1 \defas \frac{\abs{\set{u \middle\vert \sigma_u = \tau_u = 1}}}{qn}.
\end{align*}
If $\overlap(\sigma,\tau) < 2q \epsilon$ then $\abs{p_1 - p} < \epsilon$.
\end{lemma}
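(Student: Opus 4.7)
The plan is to unpack the definition of $\overlap(\sigma,\tau)$ in the two-cluster case and to show by direct computation that it equals $2q|p_1 - p|$, from which the lemma is immediate.

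First I would set up notation: since $s = 2$, the only permutations of $[s]$ are the identity and the swap $\rho = (0\;1)$. With the convention that label ``$1$'' refers to the class of size $pn$ in $\sigma$ and $qn$ in $\tau$, we have $N_1(\sigma) = pn$ and $N_1(\tau) = qn$, and the definition of $p_1$ gives $N_{11}(\sigma,\tau) = p_1 q n$. The remaining joint counts are then determined by the marginals:
\begin{align*}
N_{10}(\sigma,\tau) &= pn - p_1 q n, \\
N_{01}(\sigma,\tau) &= qn - p_1 q n, \\
N_{00}(\sigma,\tau) &= n - pn - qn + p_1 q n.
\end{align*}

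Next I would plug these into the two candidate sums inside the definition of the overlap. For the identity permutation, the four $\frac{1}{n} N_i(\sigma) N_i(\tau)$ terms are $pq n$ and $(1-p)(1-q) n$, and after the cancellations the sum $\sum_i \bigl(N_{ii} - \tfrac{1}{n} N_i(\sigma)N_i(\tau)\bigr)$ collapses to $2 n q (p_1 - p)$. The same bookkeeping for $\rho = (0\;1)$ produces $\sum_i \bigl(N_{i\rho(i)} - \tfrac{1}{n} N_i(\sigma)N_{\rho(i)}(\tau)\bigr) = -2 n q (p_1 - p)$, i.e.\ exactly the negative of the identity value. Taking the maximum and dividing by $n$ yields
\[
\overlap(\sigma,\tau) = 2 q \, |p_1 - p|.
\]

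Finally, the hypothesis $\overlap(\sigma,\tau) < 2 q \epsilon$ then gives $|p_1 - p| < \epsilon$, which is the conclusion of the lemma. No step is subtle; the only thing to be careful about is keeping the marginal constraints $N_1(\sigma) = pn$, $N_1(\tau) = qn$ straight when expanding the four joint counts, after which both permutation-sums simplify by direct cancellation.
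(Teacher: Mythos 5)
Your proof is correct and rests on the same computation as the paper's: expanding the joint counts from the marginals and observing that both permutation sums collapse to $\pm 2qn(p_1-p)$. The only difference is presentational — you establish the exact identity $\overlap(\sigma,\tau)=2q|p_1-p|$ directly, whereas the paper argues by contradiction using one permutation at a time; both are fine.
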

\begin{proof}
We prove by contradiction. Suppose $p_1 \geq p + \epsilon$.
Let $p_2 \defas \frac{\abs{\set{u \middle\vert \sigma_u = 1, \tau_u = 2}}}{(1-q)n}$.
Since $p_1 q + p_2 (1-q) = p$, we get $p_2 = \frac{p - p_1 q}{1-q}$.
We have
\begin{align*}
  \overlap(\sigma, \tau) &= \frac{1}{n}\max_\rho \sum_{i=1}^2 \left(N_{i \rho(i)} (\sigma, \tau) - \frac 1n N_i(\sigma) N_{\rho(i)}(\tau)\right) \\
	&\geq \frac{1}{n} \sum_{i=1}^2 \left(N_{i i} (\sigma, \tau) - \frac 1n N_i(\sigma) N_{i}(\tau)\right) \\
	&= (p_1 q - pq) + ((1-p_2)(1-q)-(1-p)(1-q)) \\
	&= 2q (p_1 - p) \geq 2q \epsilon.
\end{align*}
This is a contradiction. Similarly, we can show that $p_1 \geq p - \epsilon$.
\end{proof}
We are now ready to prove Proposition~\ref{prop:reconstruction-belowKS}. Its proof is a simple application of
Bernstein's inequality.
\begin{proof}[Proof of Proposition~\ref{prop:reconstruction-belowKS}]
The reconstruction algorithm is an exhaustive search over all $(pn,(1-p)n)$ partitions.
For each such partition, it looks at the number of edges within the $pn$ block.
If the number of edges is in $\left[\left(\frac{dap^2}{2}-\delta\right)n,\left(\frac{dap^2}{2}+\delta\right)n\right]$,
then it outputs (one such) partition. It is easy to show that the true partition will satisfy the above property
with high probability and so,
there is at least one partition that the algorithm can output. The rest of the proof is to show that
any partition which has low enough overlap with the true partition does not satisfy the above property.

Let $\sigma$ be the true partition. Let $\tau$ be a $(pn,(1-p)n)$ partition such that $\overlap(\sigma,\tau) < \delta$.
Let $S_1 \defas \set{u \middle\vert \tau_u = 1}$, and
$p_1 \defas \frac{\abs{\set{u \middle\vert \sigma_u = \tau_u = 1}}}{pn}$.
From Lemma~\ref{lem:overlap-correlation}, we see that $\abs{p_1 - p} < \bigoh{\delta}$.
Consider the random variable
\begin{align*}
E_1 \defas \sum_{\stackrel{u,v \in S_1}{u < v}} X_{uv} = \# \mbox{ edges within } S_1 - \left(\frac{dp^2}{2}+\bigoh{\delta}\right) n,
\end{align*}
where $X_{uv} = \indicator{uv \in G} - \frac{d M_{\sigma_u \sigma_v}}{n}$.
We see that
\begin{itemize}
  \item	$\expec{X_{uv}}=0$,
  \item	$\var{E_1} = \left(\frac{dp^2}{2}+\bigoh{\delta}\right) n$, and
  \item	$X_{uv} < 1$ a.s.
\end{itemize}
Using Bernstein's inequality, we have:
\begin{align*}
\P_n\left[\abs{E_1} > \left(\frac{d \abs{a-1}p^2}{2}+\bigoh{\delta}\right)n \middle\vert \sigma\right]
	&< \exp\left(\frac{-\frac{1}{2}\left(\frac{d \abs{a-1}p^2}{2}+\bigoh{\delta}\right)^2 n^2}
	  {\left(\frac{dp^2}{2}+\bigoh{\delta}\right) n + \frac{1}{3} \left(\frac{d \abs{a-1}p^2}{2}+\bigoh{\delta}\right)n }\right) \\
	&\leq \exp\left(\frac{-3}{4} \left(\frac{d(a-1)^2p^2 }{a+3} + \bigoh{\delta}\right)n\right) \\
	&= \exp\left(\frac{-3}{4} \left(\frac{d\lambda_2^2 (1-p)^2 }{a+3} + \bigoh{\delta}\right)n\right)
\end{align*}
So with high probability, $\tau$ will not be output by the algorithm. Since each such $\tau$ is a $(pn,(1-p)n)$ partition, the total number
of such $\tau$ is at most $\exp\left((H(p)+o(1))n\right)$, where $H(\cdot)$ is the entropy function,
$H(p) \defas p \log \frac{1}{p} + (1-p) \log \frac{1}{1-p}$. If
\begin{align*}
\frac{3d\lambda_2^2 (1-p)^2 }{4(a+3)} > H(p),
\end{align*}
then taking a small enough $\delta$ and union bound gives us the result.
In other words, all we need is
\begin{align*}
\lambda_2^2 d > \frac{4(a+3) H(p)}{3(1-p)^2}.
\end{align*}
The right hand side can be made as small as we wish by choosing, say $a = 2$ and $p$ small enough.
\end{proof}

\section{Non-distinguishability}

In this section, we will assume Theorem~\ref{thm:second-moment} and use
it to prove Theorem~\ref{thm:non-distinguish}. Our main tool is the conditional second moment
method, which was originally developed by Robinson and Wormald~\cite{RW:92} in their study of Hamiltonian
cycles in $d$-regular graphs. Janson~\cite{Janson:95} was the first to apply this method for proving contiguity.
We use a formulation from~\cite[Theorem 4.1]{Wormald}:

\begin{theorem}\label{thm:conditional-second-mom}
 Consider two sequences $\P_n, \Q_n$ of probability distributions.
 Suppose that there exist random variables $\{X_{k,n}: k \ge 3\}$ such that for every $k$,
 \begin{align}
  X_{k,n} &\toD \Pois(\mu_k) \text{ under $\Q_n$, as $n \to \infty$; and} \label{eq:cycles-Qn} \\
  X_{k,n} &\toD \Pois(\mu_k(1 + \delta_k)) \text{ under $\P_n$, as $n \to \infty$.}\label{eq:cycles-Pn}
 \end{align}
 Suppose also that for any $k^*$, the collection $X_{3,n}, \dots, X_{k^*,n}$ are asymptotically independent
 as $n \to \infty$, under both $\P_n$ and $\Q_n$.
 If
 \begin{equation}\label{eq:second-moment-cond}
  \E_{\Q_n} \left(\frac{d \P_n}{d \Q_n}\right)^2 \le (1 + o(1)) \exp\left(\sum_{k \ge 3} \mu_k \delta_k^2\right)
  < \infty
 \end{equation}
 then $\P_n$ and $\Q_n$ are mutually contiguous.
\end{theorem}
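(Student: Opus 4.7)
The plan is to apply Janson's small-cycle conditioning method. Let $Y_n = d\P_n/d\Q_n$, and for each cutoff $k^* \geq 3$ introduce the truncated cycle-count likelihood ratio
\begin{align*}
W_{k^*,n} = \prod_{k=3}^{k^*}(1+\delta_k)^{X_{k,n}}\, e^{-\mu_k \delta_k},
\end{align*}
which, in the $n \to \infty$ limit, is the Radon--Nikodym derivative of the product of $\Pois(\mu_k(1+\delta_k))$ laws against the product of $\Pois(\mu_k)$ laws, evaluated at the observed cycle counts. The intuition is that the $L^2$ inflation allowed by~\eqref{eq:second-moment-cond} is entirely attributable to these low-frequency cycles, so after subtracting off $W_{k^*,n}$ what remains should be small in $L^2(\Q_n)$.

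First I would compute the three expectations that arise when expanding $\E_{\Q_n}(Y_n - W_{k^*,n})^2$. Using asymptotic independence of the $X_{k,n}$ together with $\E z^X = e^{\mu(z-1)}$ for $X \sim \Pois(\mu)$ (applied both under $\Q_n$ and under $\P_n$; for the latter, via the identity $\E_{\Q_n}[Y_n W_{k^*,n}] = \E_{\P_n}[W_{k^*,n}]$), one obtains
\begin{align*}
\lim_{n \to \infty} \E_{\Q_n} W_{k^*,n}^2 \;=\; \lim_{n \to \infty} \E_{\Q_n}[Y_n W_{k^*,n}] \;=\; \prod_{k=3}^{k^*} e^{\mu_k \delta_k^2}.
\end{align*}
Combined with the hypothesis~\eqref{eq:second-moment-cond}, this yields
\begin{align*}
\limsup_{n \to \infty} \E_{\Q_n}(Y_n - W_{k^*,n})^2 \;\le\; \exp\Big(\sum_{k \geq 3} \mu_k \delta_k^2\Big) - \prod_{k=3}^{k^*} e^{\mu_k \delta_k^2},
\end{align*}
which tends to $0$ as $k^* \to \infty$ by convergence of the infinite product.

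To conclude, I would transfer these bounds to $Y_n$. Under $\Q_n$, $W_{k^*,n}$ converges in distribution to $W_{k^*} := \prod_{k=3}^{k^*}(1+\delta_k)^{X_k} e^{-\mu_k \delta_k}$ with the $X_k$ independent $\Pois(\mu_k)$; one checks that $W_{k^*}$ is bounded in $L^2$ and bounded away from $0$ in probability, uniformly in $k^*$, using the finiteness of $\sum_k \mu_k \delta_k^2$. Combined with the $L^2$ approximation $Y_n \to W_{k^*,n}$, this makes $\{Y_n\}$ uniformly integrable under $\Q_n$ (so $\P_n$ is asymptotically contiguous to $\Q_n$) and bounded away from $0$ in $\Q_n$-probability (so $\Q_n$ is asymptotically contiguous to $\P_n$).

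The main obstacle is the double-limit bookkeeping: all bounds on $W_{k^*,n}$ must be uniform in $k^*$, and one must let $n \to \infty$ before $k^* \to \infty$. One delicate point is that convergence of exponential moments of $X_{k,n}$ is needed, not merely weak convergence; this is standard for cycle counts because the method of moments yields convergence of all joint factorial moments. The other delicate point is the lower-tail control on $Y_n$ needed for the direction $\Q_n \triangleleft \P_n$: this requires the $L^2$ (not merely in-probability) approximation of $Y_n$ by $W_{k^*,n}$, together with the fact that $W_{k^*}$ has no mass accumulating at $0$ as $k^* \to \infty$, which again rests on the summability of $\mu_k \delta_k^2$.
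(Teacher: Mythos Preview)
The paper does not actually prove Theorem~\ref{thm:conditional-second-mom}; it quotes it as a black box from \cite[Theorem~4.1]{Wormald} (with attribution to Robinson--Wormald and Janson). So there is no ``paper's own proof'' to compare against. What you have written is precisely the standard argument behind that cited result: introduce the truncated cycle-count tilt $W_{k^*,n}$, compute $\E_{\Q_n}W_{k^*,n}^2$ and $\E_{\P_n}W_{k^*,n}=\E_{\Q_n}[Y_nW_{k^*,n}]$ via Poisson moment generating functions, and use \eqref{eq:second-moment-cond} to show $\limsup_n \|Y_n-W_{k^*,n}\|_{L^2(\Q_n)}^2 \to 0$ as $k^*\to\infty$; contiguity in both directions then follows from uniform integrability of $Y_n$ and tightness of $1/W_{k^*}$.

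Two small points worth flagging, since the theorem is stated in some generality. First, as you note, passing from $X_{k,n}\toD\Pois$ to convergence of $\E_{\Q_n}[(1+\delta_k)^{2X_{k,n}}]$ and $\E_{\P_n}[(1+\delta_k)^{X_{k,n}}]$ requires more than weak convergence; in Wormald's formulation ``asymptotic independence'' is taken to mean convergence of all joint factorial moments, which is exactly what is needed (and what Proposition~\ref{prop:cycle-count} provides in the application). Second, the lower-tail argument for $W_{k^*}$ implicitly uses $\delta_k>-1$ and that $\sum_k \mu_k\delta_k^2<\infty$ controls both the mean shift $\sum_k \mu_k(\delta_k-\log(1+\delta_k))$ and the variance $\sum_k \mu_k(\log(1+\delta_k))^2$ of $\log W_{k^*}$; this is automatic when the $\delta_k$ are bounded, as they are in the paper's application ($\delta_k=\tr(T^k)-1$), but in full generality one should state it as a hypothesis. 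Neither point is a gap in your outline---you have identified both---but they are exactly where the ``double-limit bookkeeping'' you mention has to be done carefully.
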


We will apply Theorem~\ref{thm:conditional-second-mom} with $\P_n$ replaced by
$\hat \P_n = (\P_n \mid \Omega_n)$; i.e., the
block model conditioned on having almost the expected label frequencies.

We note that~\eqref{eq:cycles-Qn},~\eqref{eq:cycles-Pn}, and the asymptotic independence property are already verified by
Proposition~\ref{prop:cycle-count}, with $\mu_k = \frac{1}{2k} d^k$ and
$\delta_k = \tr(T^k) - 1$. Recalling that $1 = \lambda_1 \ge \cdots \ge \lambda_s$ are the eigenvalues of $T$, we have
$\delta_k = \sum_{i \ge 2} \lambda_i$. Hence,
\begin{align*}
 \sum_{k=3}^\infty \mu_k \delta_k^2
 &= \frac 12 \sum_{k=3}^\infty \frac {d^k}{k} \sum_{i,j=2}^s \lambda_i^k \lambda_j^k \\
 &= \frac 12 \sum_{i,j=2}^s \sum_{k=3}^\infty \frac {(d\lambda_i \lambda_j)^k}{k} \\
 &= \sum_{i,j=2}^s \log \psi(d\lambda_i \lambda_j),
\end{align*}
where $\psi(x) = (1-x)^{-1/2} e^{-x/2-x^2/4}$. In particular, condition~\eqref{eq:second-moment-cond}
follows immediately from Theorem~\ref{thm:second-moment}, which in turn proves
that $\tilde \P_n$ and $\Q_n$ are mutually contiguous. Since
$\P_n(\Omega_n) \to 1$, $\P_n$ and $\tilde \P_n$ are mutually
contiguous, and Theorem~\ref{thm:non-distinguish} follows.

\section{Second moment}
In this section, we will prove our main result calculating the second moment under the uniform integrability condition (Theorem~\ref{thm:second-moment}).
Our first lemma expresses the second moment in terms of (centered and normalized) multinomial random variables.
In order to state the lemma, we make the following notation.
Given two assignments $\sigma, \tau \in [s]^n$, let $N_{ij} \defas N_{ij}(\sigma, \tau) \defas |\{v : \sigma_v = i, \tau_v = j\}|$, and
$X_{ij} \defas X_{ij}(\sigma, \tau) \defas n^{-1/2}\left(N_{ij}- n \pi_i\pi_j\right)$.
Recall that $\Omega_n$ is the event that the label frequencies are approximately their expected values, and
let $Y_n$ denote the restricted density $\indicator{\Omega_n} \frac{d \P_n}{d \Q_n}$.
 With a slight overlading of notation, we write $\sigma \in \Omega_n$ if for all $i \in [s]$, 
 $|\{u : \sigma_u = i\}| = n \pi_i \pm a_n$.
Recall that $A \defas M - d \1\trans{\1}$.

\begin{lemma}\label{lem:second-moment-simpl}
We have:
\begin{align*}
  \E_{\Q_n} Y_n^2
  &= (1 + O(n^{-1}))
 \sum_{\sigma,\tau \in \Omega_n} \P_n(\sigma) \P_n(\tau)
 \exp\left(\frac 1{2d} \sum_{ijk\ell} X_{ij} X_{k\ell} A_{ik} A_{j\ell} + \nu_1 + \nu_2 + \xi_n\right),
\end{align*}
where
\begin{align*}
  \nu_1 &= -\frac{1}{2d} \sum_{ij} A_{ii} A_{jj} \pi_i \pi_j, \\
  \nu_2 &= -\frac{1}{2d^2} \sum_{ijk\ell} A_{ik}^2 A_{j\ell}^2 \pi_i \pi_j \pi_k \pi_\ell, \mbox{ and} \\
  \xi_n &= O(n^{-1/2}) \sum_{ij} |X_{ij}| + O(n^{-1}) \left(\sum_{ij} |X_{ij}|\right)^2.
\end{align*}
\end{lemma}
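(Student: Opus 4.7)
The plan is to compute $\E_{\Q_n} Y_n^2$ by writing it as a double sum over label assignments, collapsing the inner sum over graphs using edge-independence, then Taylor-expanding. Writing $Y_n(G) = \sum_{\sigma \in \Omega_n} \P_n(G,\sigma)/\Q_n(G)$ and expanding the square yields
\begin{align*}
\E_{\Q_n} Y_n^2 = \sum_{\sigma,\tau \in \Omega_n} \sum_G \frac{\P_n(G,\sigma)\,\P_n(G,\tau)}{\Q_n(G)}.
\end{align*}
All three densities factor over pairs $\{u,v\}$, so the inner $G$-sum splits as a product of per-pair contributions. Adding the edge-present and edge-absent contributions and simplifying using $A_{ij} = M_{ij} - d$ gives the identity
\begin{align*}
\sum_G \frac{\P_n(G,\sigma)\,\P_n(G,\tau)}{\Q_n(G)} = \P_n(\sigma)\,\P_n(\tau)\prod_{u<v}\left(1 + \frac{A_{\sigma_u\sigma_v} A_{\tau_u\tau_v}}{d(n-d)}\right),
\end{align*}
since the label marginal of $\P_n$ factorizes as $\P_n(\sigma) = \prod_v \pi_{\sigma_v}$.

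Next I take logarithms and Taylor-expand. Each bracket is $1 + O(1/n)$, so $\log(1+x) = x - x^2/2 + O(x^3)$; the cubic remainder summed over $\binom{n}{2}$ pairs is $O(n^{-1})$ uniformly in $(\sigma,\tau)$, contributing to the multiplicative $(1 + O(n^{-1}))$ prefactor. The linear piece, after grouping pairs by $(\sigma_u,\tau_u,\sigma_v,\tau_v)$ and subtracting the diagonal $u=v$ contribution, becomes
\begin{align*}
\frac{1}{2 d (n-d)}\left(\sum_{ijk\ell} N_{ij} N_{k\ell} A_{ik} A_{j\ell} - \sum_{ij} N_{ij} A_{ii} A_{jj}\right),
\end{align*}
into which I substitute $N_{ij} = n\pi_i\pi_j + \sqrt n\, X_{ij}$ and collect by order in $n$.

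The crucial identity is $\sum_i \pi_i A_{ij} = \sum_i \pi_i M_{ij} - d = 0$, which follows directly from the standing assumption $\sum_j M_{ij}\pi_j = d$ together with the symmetry of $M$. This kills both the $O(n^2)$ pure-$\pi$ term, since $\sum_{ijk\ell}\pi_i\pi_j\pi_k\pi_\ell A_{ik}A_{j\ell} = (\pi^T A \pi)^2 = 0$, and the two $O(n^{3/2})$ cross terms. What survives at order $n$ is $\frac{n}{2d(n-d)}\sum_{ijk\ell} X_{ij} X_{k\ell} A_{ik} A_{j\ell}$; expanding $n/(n-d) = 1 + O(1/n)$ isolates the desired leading quadratic form $\frac{1}{2d}\sum X_{ij} X_{k\ell} A_{ik} A_{j\ell}$, with the $O(1/n)$ correction absorbed into the $O(n^{-1})(\sum |X_{ij}|)^2$ part of $\xi_n$. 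The diagonal subtraction produces the constant $\nu_1$ plus an $O(n^{-1/2})\sum |X_{ij}|$ remainder. The quadratic-in-log term $-\tfrac{1}{2}\sum_{u<v}(A_{\sigma_u\sigma_v}A_{\tau_u\tau_v})^2/(d(n-d))^2$ is treated analogously: its leading contribution is the constant $\nu_2$, since the $A^2$-moments $\sum\pi_i\pi_j\pi_k\pi_\ell A_{ik}^2 A_{j\ell}^2$ are generally non-zero.

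The remaining pieces—the $\sqrt n$-linear remnants from $N = n\pi\otimes\pi + \sqrt n X$, the $O(1/n)$ corrections from $(1-d/n)^{-1}$, and the sub-leading parts of the quadratic-in-log term—each carry one or two factors of $X_{ij}$ with prefactors of order $n^{-1/2}$ or $n^{-1}$ respectively, and so fit inside the stated $\xi_n$. The main obstacle is not any single computation but careful bookkeeping: after the two cancellations driven by $\sum_i \pi_i A_{ij} = 0$, one must verify that every residual term has the claimed $\xi_n$ form and that the uniform cubic-log remainder can be cleanly pulled out as a $(1 + O(n^{-1}))$ prefactor uniformly over $(\sigma,\tau) \in \Omega_n \times \Omega_n$.
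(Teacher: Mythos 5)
Your proposal is correct and follows essentially the same route as the paper's proof: expand $Y_n^2$ as a double sum over labellings, factor the inner sum over graphs into per-pair contributions under $\Q_n$, exponentiate via $\log(1+x)=x-x^2/2+O(x^3)$, group pairs by the label quadruple $(\sigma_u,\tau_u,\sigma_v,\tau_v)$, substitute $N_{ij}=n\pi_i\pi_j+\sqrt{n}\,X_{ij}$, and invoke $\sum_i \pi_i A_{ij}=0$ to cancel the $O(n^2)$ and $O(n^{3/2})$ terms, leaving the quadratic form plus $\nu_1$, $\nu_2$, and $\xi_n$. The only cosmetic difference is that you retain the exact per-pair identity $1+A_{\sigma_u\sigma_v}A_{\tau_u\tau_v}/(d(n-d))$ where the paper expands to $O(n^{-3})$ before exponentiating.
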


\begin{proof}
 Define 
 \[
  W_{uv}(G, \sigma)
  = \begin{cases}
     \frac{M_{\sigma_u,\sigma_v}}{d} &\text{if $(u, v) \in E(G)$} \\
     \frac{1 - \frac{M_{\sigma_u,\sigma_v}}{n}}{1 - \frac dn} &\text{if $(u, v) \not \in E(G)$.}
    \end{cases}
 \]
 Then we may write out
 \begin{align*}
  Y_n &= \sum_{\sigma \in \Omega_n} \frac{\P_n(G, \sigma)}{\Q_n(G)} \\
  &= \sum_{\sigma \in \Omega_n} \P_n(\sigma)
  \prod_{u, v} W_{uv}(G, \sigma).
 \end{align*}
 Squaring both sides and taking expectations,
 \begin{align}
  \E_{\Q_n} Y_n^2
  &= \E_{\Q_n} \sum_{\sigma,\tau \in \Omega_n} \P_n(\sigma) \P_n(\tau) \prod_{u,v} W_{uv}(G, \sigma) W_{uv}(G, \tau) \notag \\
  &= \sum_{\sigma,\tau \in \Omega_n} \P_n(\sigma) \P_n(\tau) \prod_{u,v} \E_{\Q_n} [W_{uv}(G, \sigma) W_{uv}(G, \tau)],
  \label{eq:second-moment-1}
 \end{align}
 where the last equality holds because under $\Q_n$, and for any fixed $\sigma$,
 the variables $W_{uv}(G,\sigma)$ are independent as $u$ and $v$ vary.

Let us compute the inner expectation in~\eqref{eq:second-moment-1}.
Recall that under $\Q_n$, $(u, v) \in E(G)$ with probability $\frac dn$.
Writing (for brevity) $s$ for $M_{\sigma_u \sigma_v}$ and $t$ for $M_{\tau_u \tau_v}$, we have
\begin{align*}
 \E_{\Q_n} W_{uv}(G, \sigma) W_{uv}(G, \tau)
 &= \frac{st}{d^2} \cdot \frac dn + \frac{(1 - \frac sn)(1 - \frac tn)}{(1 - \frac dn)^2} (1 - \frac dn) \\
 &= \frac{st}{nd} + \left(1 - \frac sn\right)\left(1 - \frac tn\right)\left(1 + \frac dn + \frac{d^2}{n^2} + O(n^{-3})\right) \\
 &= 1 + \frac{(s - d)(t - d)}{nd} + \frac{(s-d)(t-d)}{n^2} + O(n^{-3})
\end{align*}
Setting $q = (s-d)(t-d)$, and using the fact that
$1 + x = \exp(x - x^2/2 + O(x^3))$, we have
\[
 \E_{\Q_n} W_{uv}(G, \sigma) W_{uv}(G, \tau)
 = \exp\left(
 \frac{q}{dn} + \frac{q}{n^2} - \frac{q^2}{2d^2 n^2} + O(n^{-3})
 \right).
\]
Now, if $(\sigma_u, \tau_u, \sigma_v, \tau_v) = (i, j, k, \ell)$
then $(s-d)(t-d) = (M_{ik} - d)(M_{j\ell} - d) = A_{ik} A_{j\ell}$. Hence,
\begin{equation}\label{eq:second-moment-2}
 \E_{\Q_n} W_{uv}(G, \sigma) W_{uv}(G, \tau)
 = \exp\left(
 \frac{A_{ik} A_{j\ell}}{dn} + \frac{A_{ik} A_{j\ell} }{n^2} - \frac{(A_{ik} A_{j\ell})^2}{2d^2 n^2} + O(n^{-3})
 \right).
\end{equation}
Let $N_{ijk\ell} = |\{\{u, v\}: \sigma_u = i, \tau_u = j, \sigma_v = k, \tau_v = \ell\}|$.
Plugging~\eqref{eq:second-moment-2} into~\eqref{eq:second-moment-1}, we have
\begin{align}
  \E_{\Q_n} Y_n^2
  &= (1 + O(n^{-1})) \sum_{\sigma,\tau \in \Omega_n} \P_n(\sigma) \P_n(\tau) \exp\left(
    \sum_{ijk\ell=1}^s N_{ijk\ell} \left(
 \frac{A_{ik} A_{j\ell}}{dn} + \frac{A_{ik} A_{j\ell} }{n^2} - \frac{(A_{ik} A_{j\ell})^2}{2d^2 n^2}
 \right)
  \right) \label{eq:second-moment-3}
\end{align}
where the $(1 + O(n^{-1}))$ term arises because $\sum_{ijk\ell} N_{ijk\ell} \le n^2$.
Applying Lemma~\ref{lem:Nijkl-Xijkl} (below) now finishes the proof.
\end{proof}

The last step in the proof of Lemma~\ref{lem:second-moment-simpl} requires us to replace
$N_{ijk\ell}$ by its normalized version, $X_{ij}$, and then rearrange the sums in~\eqref{eq:second-moment-3}.
We will do this step in slightly more generality, where we allow $N_{ijk\ell}$ to be defined
on a subset of the vertices. For the purposes of this section it suffices to consider $S = [n]$,
but the general form will be useful when we consider non-reconstruction.

\begin{lemma}\label{lem:Nijkl-Xijkl}
Let $S \subseteq [n]$ such that $\abs{S} = n - o(n)$. Further, let
\begin{align*}
N_{ijk\ell} &\defas N_{ijk\ell}(\sigma,\tau) \defas \abs{\{\{u, v\} : u,v \in S, \sigma_u = i, \tau_u = j, \sigma_v = k, \tau_v = \ell\}}, \\
N_{ij} &\defas N_{ij}(\sigma,\tau) \defas \abs{\{u : u \in S, \sigma_u = i, \tau_u = j\}} \mbox{ and,}\\
X_{ij} &\defas X_{ij}(\sigma, \tau) \defas n^{-1/2}\left(N_{ij} - n \pi_i \pi_j\right) \\
t_{ijk\ell} &\defas \frac{A_{ik}A_{j\ell}}{dn} + \frac{A_{ik} A_{j\ell}}{n^2} - \frac{(A_{ik} A_{j\ell})^2}{2 d^2 n^2}
\end{align*}
Then, we have:
\begin{align*}
    \sum_{ijk\ell} N_{ijk\ell} t_{ijk\ell} = 
\frac 1{2d} \sum_{ijk\ell} X_{ij} X_{k\ell} A_{ik} A_{j\ell} + \nu_1 + \nu_2 + \xi_n,
\end{align*}
where
\begin{align*}
  \nu_1 &= -\frac{1}{2d} \sum_{ij} A_{ii} A_{jj} \pi_i \pi_j, \\
  \nu_2 &= -\frac{1}{4d^2} \sum_{ijk\ell} A_{ik}^2 A_{j\ell}^2 \pi_i \pi_j \pi_k \pi_\ell, \mbox{ and} \\
  \xi_n &= O(n^{-1/2}) \sum_{ij} |X_{ij}| + O(n^{-1}) \left(\sum_{ij} |X_{ij}|\right)^2 + O(n^{-1}).
\end{align*}
\end{lemma}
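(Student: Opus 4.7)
The plan is to substitute the expansion $N_{ij} = n\pi_i\pi_j + \sqrt{n}\,X_{ij}$ into $\sum_{ijk\ell} N_{ijk\ell} t_{ijk\ell}$ and expand carefully. First, since $t_{ijk\ell}$ is symmetric under the swap $(i,j) \leftrightarrow (k,\ell)$ (because $A$ is symmetric), I would reduce $N_{ijk\ell}$, which counts labeled pairs of distinct vertices, to the product $N_{ij} N_{k\ell}$ modulo a diagonal $u = v$ correction:
\[
  \sum_{ijk\ell} N_{ijk\ell}\, t_{ijk\ell} \;=\; \tfrac12 \sum_{ijk\ell} N_{ij} N_{k\ell}\, t_{ijk\ell} \;-\; \tfrac12 \sum_{ij} N_{ij}\, t_{ijij}
\]
(the $\tfrac12$ coming from unordered pairs; any $O(1)$ slop from $|S| = n - o(n)$ rather than $|S| = n$ contributes $O(n^{-1})$, absorbed into $\xi_n$).

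Expanding $N_{ij} N_{k\ell}$ produces three pieces: a constant $n^2 \pi_i\pi_j\pi_k\pi_\ell$, a linear piece $n^{3/2}(\pi_i\pi_j X_{k\ell} + \pi_k\pi_\ell X_{ij})$, and a quadratic piece $n\, X_{ij} X_{k\ell}$. The key algebraic input is the identity $\pi^T A = 0$: the equal-expected-degree hypothesis $\sum_i M_{ij}\pi_i = d$ combined with $A = M - d\mathbf{1}\mathbf{1}^T$ gives $\sum_i A_{ij}\pi_i = 0$ for every $j$. Paired with the leading term $t_1 = \frac{A_{ik}A_{j\ell}}{dn}$, this identity kills both the constant-in-$X$ contribution, since $\sum_{ijk\ell} A_{ik}A_{j\ell}\pi_i\pi_j\pi_k\pi_\ell = (\pi^T A \pi)^2 = 0$, and the linear-in-$X$ contribution by the same factorization $\sum_{ijk\ell} A_{ik}A_{j\ell}\pi_i\pi_j X_{k\ell} = \sum_{k\ell} X_{k\ell}(\pi^T A)_k(\pi^T A)_\ell = 0$. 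Only the quadratic piece survives, giving exactly $\frac{1}{2d}\sum_{ijk\ell} X_{ij} X_{k\ell} A_{ik} A_{j\ell}$, the main term.

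The other pieces of $t_{ijk\ell}$ produce the remaining constants and the error. The subleading $t_2 = \frac{A_{ik}A_{j\ell}}{n^2}$ has its constant-in-$X$ contribution again annihilated by $\pi^T A = 0$, and its linear and quadratic pieces fit into $O(n^{-1/2})\sum_{ij}|X_{ij}|$ and $O(n^{-1})(\sum_{ij}|X_{ij}|)^2$ respectively. The piece $t_3 = -\frac{(A_{ik}A_{j\ell})^2}{2d^2 n^2}$ yields $\nu_2 = -\frac{1}{4d^2}\sum_{ijk\ell} A_{ik}^2 A_{j\ell}^2 \pi_i\pi_j\pi_k\pi_\ell$ from its constant-in-$X$ part (no cancellation this time, since $\sum_i A_{ij}^2 \pi_i$ is generically nonzero), with the linear and quadratic parts going into $\xi_n$. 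Finally, the diagonal correction $-\tfrac12 \sum_{ij} N_{ij} t_{ijij}$, expanded the same way with $t_{ijij} = \frac{A_{ii}A_{jj}}{dn} + O(n^{-2})$, produces $\nu_1 = -\frac{1}{2d}\sum_{ij} A_{ii}A_{jj}\pi_i\pi_j$ together with $O(n^{-1/2})\sum_{ij}|X_{ij}| + O(n^{-1})$.

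The main obstacle is purely bookkeeping: one has to track the six kinds of contributions (three pieces of $t$ times the three pieces of $N_{ij}N_{k\ell}$, plus the diagonal), verify that the $\pi^T A = 0$ cancellation correctly kills every would-be large term (those of order $n$ and $\sqrt n$ that could not be absorbed into $\xi_n$), and place each leftover piece into either a $\nu_i$ or into the appropriate part of $\xi_n$. Beyond this careful accounting, the proof requires no analytic input.
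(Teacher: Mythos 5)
Your proposal is correct and follows essentially the same route as the paper's proof: the same reduction $\sum N_{ijk\ell}t_{ijk\ell} = \tfrac12\sum N_{ij}N_{k\ell}t_{ijk\ell} - \tfrac12\sum N_{ij}t_{ijij}$, the same substitution $N_{ij} = n\pi_i\pi_j + \sqrt{n}X_{ij}$, and the same use of $\sum_i \pi_i A_{ij} = 0$ to annihilate the would-be $O(n)$ and $O(\sqrt n)$ terms, leaving the quadratic form plus $\nu_1$, $\nu_2$, and $\xi_n$. The bookkeeping of which leftover pieces land in $\nu_1$, $\nu_2$, and $\xi_n$ matches the paper exactly.
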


\begin{proof}
We see that $N_{ijk\ell} = \frac{1}{2} N_{ij} N_{k\ell}$ unless $i = k$ and $j = \ell$, 
in which case $N_{ijk\ell} = \binom{N_{ij}}{2} = \frac 12 N_{ij} N_{k\ell} - \frac 12 N_{ij}$.
So, we have
\begin{equation}
    \sum_{ijk\ell} N_{ijk\ell} t_{ijk\ell}
  = \frac 12 \sum_{ijk\ell} N_{ij} N_{k\ell} t_{ijk\ell} - \frac 12 \sum_{ij} N_{ij} t_{ijij}
\label{eq:second-moment-4}
\end{equation}

Recall that $\sum_i \pi_i M_{ik} = d$ for any
fixed $k$ and $\sum_k \pi_k M_{ik} = d$ for any fixed $i$. It follows that
$\sum_i \pi_i A_{ij} = \sum_j \pi_j A_{ij} = 0$.
Hence,
\[
 \sum_i \pi_i t_{ijk\ell} = - \sum_i \pi_i \frac{(A_{ik} A_{j\ell})^2}{2 d^2 n^2}.
\]
Writing $N_{ij} = \sqrt n X_{ij} + n \pi_i \pi_j$, we have
\begin{align*}
 \sum_{ijk\ell} N_{ij} N_{k\ell} t_{ijk\ell}
 &= n \sum_{ijk\ell} X_{ij} X_{k\ell} t_{ijk\ell}
 - \sum_{ijk\ell} \frac{(A_{ik} A_{j\ell})^2}{2 d^2 n^2} \left(n^{3/2} X_{ij} \pi_k \pi_\ell + n^{3/2} X_{k\ell} \pi_i \pi_j + n^2 \pi_i \pi_j \pi_k \pi_\ell\right) \\
 &= n \sum_{ijk\ell} X_{ij} X_{k\ell} t_{ijk\ell}
 - \sum_{ijk\ell} \frac{(A_{ik} A_{j\ell})^2}{2 d^2} \pi_i \pi_j \pi_k \pi_\ell
 + O(n^{-1/2}) \sum_{ij} |X_{ij}|,
\end{align*}
Next, note that
$t_{ijk\ell} = \frac{1}{dn} A_{ik} A_{j\ell} + O(n^{-2})$, and so
\begin{multline*}
 \sum_{ijk\ell} N_{ij} N_{k\ell} t_{ijk\ell}
 = \frac{1}{d} \sum_{ijk\ell} X_{ij} X_{k\ell} A_{ik} A_{j\ell}
 - \frac{1}{2 d^2} \sum_{ijk\ell} (A_{ik} A_{j\ell})^2 \pi_i \pi_j \pi_k \pi_\ell \\
 + O(n^{-1/2}) \sum_{ij} |X_{ij}|
 + O(n^{-1}) \left(\sum_{ij} |X_{ij}|\right)^2;
\end{multline*}
we recognize the second term as $2 \nu_2$, and the last two terms as being part of $\xi_n$.
This takes care of first term in~\eqref{eq:second-moment-4}; for the
second term,
\[
 \sum_{ij} N_{ij} t_{ijij}
 = \sqrt n \sum_{ij} X_{ij} t_{ijij} + n \sum_{ij} \pi_i \pi_j t_{ijij}
 = O(n^{-1/2}) \sum_{ij} |X_{ij}| + \frac{1}{d} \sum_{ij} A_{ii} A_{jj} \pi_i \pi_j + O(n^{-1});
\]
here, the second term is $2 \nu_1$ and the others are part of $\xi_n$.
\end{proof}

The following lemma gives a simpler form for $\nu_1$ and $\nu_2$ appearing above.
We define $B \defas \frac{1}{d} \diag(\pi) A = T - \pi \otimes \1$.
In particular, this will allow us to relate $\nu_1$ and $\nu_2$ to the eigenvalues of $T$.
\begin{lemma}\label{lem:nu}
Let $\nu_1$ and $\nu_2$ be as in Lemma~\ref{lem:Nijkl-Xijkl}. Then, we have:
 \begin{align*}
  \nu_1 &= -\frac d2 \tr(B)^2 \\
  \nu_2 &= -\frac {d^2}{4} \tr(B^2)^2.
 \end{align*}
\end{lemma}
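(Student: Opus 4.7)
The plan is to unfold the definition $B = d^{-1}\diag(\pi) A$ componentwise, so that $B_{ij} = d^{-1}\pi_i A_{ij}$, and then match the resulting expressions directly against the formulas for $\nu_1, \nu_2$ given in Lemma~\ref{lem:Nijkl-Xijkl}. The only structural fact beyond bookkeeping is that $A = M - d \1\1^T$ is symmetric (since $M$ is), which lets us identify $A_{ik}A_{ki}$ with $A_{ik}^2$.

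For $\nu_1$, I would first compute $\tr(B) = d^{-1}\sum_i \pi_i A_{ii}$, square it to get $\tr(B)^2 = d^{-2}\sum_{i,j} \pi_i\pi_j A_{ii}A_{jj}$, and multiply by $-d/2$; the result is exactly $\nu_1$. For $\nu_2$, I would write
\[
\tr(B^2) = \sum_{i,k} B_{ik} B_{ki} = d^{-2}\sum_{i,k} \pi_i \pi_k A_{ik} A_{ki} = d^{-2}\sum_{i,k} \pi_i \pi_k A_{ik}^2,
\]
using symmetry of $A$ in the last step. Squaring and multiplying by $-d^2/4$ produces
\[
-\frac{d^2}{4}\tr(B^2)^2 = -\frac{1}{4d^2}\sum_{i,j,k,\ell} \pi_i\pi_j\pi_k\pi_\ell A_{ik}^2 A_{j\ell}^2,
\]
which matches the formula for $\nu_2$ in Lemma~\ref{lem:Nijkl-Xijkl}.

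There is no genuine obstacle here: the lemma is a purely algebraic rewriting meant to package the two correction constants in terms of spectral invariants of $T$ (using $B = T - \pi \otimes \1$, which has the same nonzero eigenvalues as $T$ shifted appropriately). The only thing to be careful about is the factor of $4$ versus $2$ in the denominator of $\nu_2$ — the $4$ comes from $\frac{1}{2}$ in the definition together with $\tr(B^2)^2$ not double-counting pairs — so I would double-check the bookkeeping there but expect no surprises.
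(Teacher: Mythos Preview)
Your proposal is correct and is essentially the same direct computation as the paper's own proof: both unfold $B_{ij} = d^{-1}\pi_i A_{ij}$, use the symmetry of $A$, and match terms. The only cosmetic difference is that the paper routes the $\nu_2$ calculation through $\tr\big((B^{\otimes 2})^2\big) = \tr(B^2)^2$ rather than squaring $\tr(B^2)$ directly, but this is the same identity.
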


\begin{proof}
  Note that $A_{ii} \pi_i = d B_{ii}$. Hence,
  \[
   \nu_1 = -\frac 1{2d} \sum_{ij} A_{ii} A_{jj} \pi_i \pi_j = -\frac d2 \sum_{ij} B_{ii} B_{jj}
   = -\frac d2 \tr(B)^2.
  \]
  Similarly, since $A_{ik} \pi_i = B_{ik}$ and $A_{ik} \pi_k = A_{ki} \pi_k = B_{ki}$,
  \[
   \nu_2 = -\frac{d^2}{4} \sum_{ijk\ell} B_{ik} B_{ki} B_{j\ell} B_{\ell j}
   = -\frac{d^2}{4} \tr\big((B^{\otimes 2})^2\big)
   = -\frac{d^2}{4} \tr(B^2)^2.
   \qedhere
  \]
\end{proof}

The following lemma shows that $\xi_n$ in Lemma~\ref{lem:Nijkl-Xijkl} is very small in an appropriate sense.
\begin{lemma}\label{lem:xi}
Let $\xi_n$ be as in Lemma~\ref{lem:Nijkl-Xijkl}. If $a_n = o(n^{1/2})$ then $\E \exp(a_n \xi_n) \to 1$.
\end{lemma}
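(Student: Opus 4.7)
The plan is to exploit concentration of normalized multinomial counts to show that $a_n \xi_n$ is negligible at the exponential level. Let $Y_n := \sum_{ij} |X_{ij}|$, so Lemma~\ref{lem:Nijkl-Xijkl} gives the deterministic bound
\[
|\xi_n| \le C_1 n^{-1/2} Y_n + C_2 n^{-1} Y_n^2 + C_3 n^{-1}
\]
for some absolute constants. Using $e^{-|x|} \le e^x \le e^{|x|}$, it suffices to show both $\E e^{|a_n \xi_n|} \to 1$ and $\E e^{-|a_n \xi_n|} \to 1$. The lower one is immediate from $e^{-t} \ge 1 - t$ combined with $\E|a_n \xi_n| \le C_1 a_n n^{-1/2}\,\E Y_n + C_2 a_n n^{-1}\,\E Y_n^2 + o(1) = o(1)$, since $\E X_{ij}^2 = \pi_i\pi_j(1-\pi_i\pi_j)$ makes both $\E Y_n$ and $\E Y_n^2$ bounded.

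For the upper bound I would apply Cauchy--Schwarz to split the linear and quadratic pieces, reducing the task to
\[
\E e^{\lambda_n Y_n} \to 1 \quad \text{for } \lambda_n = O(a_n/\sqrt n) = o(1),
\]
and
\[
\E e^{\mu_n Y_n^2} \to 1 \quad \text{for } \mu_n = O(a_n/n) = o(n^{-1/2}).
\]
When $\sigma$ and $\tau$ are drawn independently according to $\P_n$, the pairs $(\sigma_v, \tau_v)$ are i.i.d.\ with distribution $\pi \otimes \pi$, so $N_{ij}$ is $\Binom(n, \pi_i\pi_j)$-distributed and Bernstein's inequality gives $\Pr(|X_{ij}| > t) \le 2 \exp(-c\min(t^2, t\sqrt n))$. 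Tail integration yields $\E e^{\lambda |X_{ij}|} = 1 + O(\lambda)$ for $\lambda = o(1)$ and $\E e^{\mu X_{ij}^2} = 1 + O(\mu)$ for $\mu$ below a fixed absolute constant. I would lift these to $Y_n$ and $Y_n^2$ via H\"older's inequality across the $s^2$ coordinates: for the linear piece,
\[
\E e^{\lambda_n Y_n} = \E \prod_{ij} e^{\lambda_n |X_{ij}|} \le \prod_{ij}\bigl(\E e^{s^2 \lambda_n |X_{ij}|}\bigr)^{1/s^2} \le \exp\bigl(O(\lambda_n)\bigr),
\]
and for the quadratic piece, first use Cauchy--Schwarz to bound $Y_n^2 \le s^2 \sum_{ij} X_{ij}^2$ and then apply H\"older to the product of $e^{\mu_n s^2 X_{ij}^2}$'s. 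Both products are $1 + o(1)$ at the prescribed rates.

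The technical crux is the quadratic term: since $X_{ij}^2$ is only sub-exponential, its moment generating function $\E e^{\mu X_{ij}^2}$ is finite only for $\mu$ below a fixed constant, and to make this expectation collapse to $1 + o(1)$ (rather than merely $O(1)$) one needs $\mu \to 0$. The hypothesis $a_n = o(\sqrt n)$ enters precisely here, forcing $\mu_n = O(a_n/n) = o(n^{-1/2})$. Once this is handled, the remainder of the argument is a routine bookkeeping exercise with Cauchy--Schwarz, H\"older, and tail integration, and I do not foresee any deeper obstacle.
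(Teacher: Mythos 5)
Your proof is correct and rests on the same substance as the paper's: the paper argues that $a_n\xi_n\to 0$ in probability (by the CLT for the $X_{ij}$) and that $\exp(a_n\xi_n)$ is uniformly integrable via Hoeffding's inequality, which is exactly the sub-Gaussian concentration of the multinomial counts that you invoke through Bernstein's inequality. Your version simply carries out explicitly, via tail integration, Cauchy--Schwarz, and H\"older across the $s^2$ coordinates, the moment-generating-function estimate that the paper compresses into ``follows from Hoeffding's inequality,'' and it correctly identifies $a_n/n \to 0$ as the reason the quadratic term's exponential moment collapses to $1+o(1)$.
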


\begin{proof}
 By the central limit theorem, each $X_{ij}$ has a limit in distribution as $n \to \infty$;
 hence $a_n \xi_n \to 0$ in probability. It is therefore enough to show that the sequence
 $\exp(a_n \xi_n)$ is uniformly integrable, but this follows from Hoeffding's inequality.
\end{proof}

We now state the following three results before we prove the main result of this section.
The following proposition characterizes when the exponential of a quadratic form of a sequence of multinomial random variables is uniformly integrable.
Its proof can be found in Section~\ref{sec:UI-multinomials}.
 \begin{proposition}\label{prop:ui}
  Define $X_{ij}$ as in Lemma~\ref{lem:xi}. Then
  \[
  \exp\left(\frac{1}{2d} \sum X_{ij} X_{k\ell} A_{ik} A_{j\ell}\right)
  \]
  is uniformly integrable if $Q(\pi, A/\sqrt{2d}) < 1$, and
  fails to be uniformly integrable if $Q(\pi, A/\sqrt{2d}) > 1$.
 \end{proposition}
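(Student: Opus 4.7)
The plan is to reformulate the quadratic form $F_n := \frac{1}{2d}\sum X_{ij}X_{k\ell}A_{ik}A_{j\ell}$ in terms of the empirical joint label distribution $\alpha_n := N/n \in \Delta_{s^2}(\pi)$. Writing $X = \sqrt{n}(\alpha_n - p)$ with $p = \pi\otimes\pi$ gives $F_n = n f(\alpha_n)$ where $f(\alpha) := \frac{1}{2d}(\alpha - p)^T(A\otimes A)(\alpha - p)$. Directly from Definition~\ref{def:Q}, $Q(\pi, A/\sqrt{2d}) = \sup_{\alpha \in \Delta_{s^2}(\pi)} f(\alpha)/D(\alpha,p)$. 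The whole question therefore reduces to a competition between the quadratic form $nf(\alpha)$ in the exponent and the multinomial large-deviations rate $nD(\alpha,p)$. This is made quantitative by the Stirling approximation to the (conditioned) multinomial PMF:
\[
  \Pr(\alpha_n = \alpha) = \Theta\bigl(n^{-(s^2-1)/2}\bigr)\exp\bigl(-nD(\alpha,p)\bigr),
\]
uniformly over lattice points $\alpha \in \Delta_{s^2}(\pi)$ bounded away from the boundary.

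For the forward direction, suppose $Q(\pi, A/\sqrt{2d}) < 1$. I would choose $\delta > 0$ small enough that $(1+\delta)Q < 1 - \delta$; by compactness of $\Delta_{s^2}(\pi)$ and continuity of the ratio $f/D$ (with the value at $\alpha = p$ defined via its Hessian limit), this yields the uniform bound $(1+\delta)f(\alpha) \le (1-\delta)D(\alpha,p)$. Then
\[
  \E\exp\bigl((1+\delta)F_n\bigr) \le C\,n^{-(s^2-1)/2}\sum_{\alpha}\exp\bigl(-n\delta D(\alpha,p)\bigr),
\]
and the right-hand side is a Riemann sum for a Gaussian-type integral of size $\Theta(n^{(s^2-1)/2})$ (using the quadratic expansion $D(\alpha,p) \sim \frac{1}{2}(\alpha - p)^T\diag(1/p_{ij})(\alpha - p)$ near $p$ and the fact that $D$ is bounded below by a positive constant away from $p$). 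Hence $\sup_n \E\exp((1+\delta)F_n) < \infty$, and the de~la~Vallée–Poussin criterion delivers uniform integrability of $\exp(F_n)$.

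For the reverse direction, suppose $Q(\pi, A/\sqrt{2d}) > 1$ and pick $\alpha^* \in \Delta_{s^2}(\pi)$ witnessing $f(\alpha^*) > D(\alpha^*,p)$. The matching Sanov-type lower bound $\Pr(\|\alpha_n - \alpha^*\| < \eta) \ge \exp(-n(D(\alpha^*,p)+o(1)))$ yields
\[
  \E\exp(F_n) \;\ge\; \exp\bigl(n[f(\alpha^*) - D(\alpha^*,p)] - o(n)\bigr) \;\to\; \infty.
\]
Because $F_n$ converges in distribution to the (finite a.s.) Gaussian quadratic form in the CLT limit of the $X_{ij}$, a divergent $L^1$ norm rules out uniform integrability (UI plus convergence in law would force $L^1$ convergence to a finite limit).

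The main obstacles are of a technical flavor rather than a conceptual one. First, one must carefully reconcile the definition of $Q$ restricted to $\Delta_{s^2}(\pi)$ with the fact that under the conditioning on $\Omega_n$ the marginals of $N$ are only \emph{approximately} $n\pi$; this is harmless because $\P_n(\Omega_n) \to 1$ at a polynomial rate and the Sanov estimate is insensitive to $o(n)$ slack in the marginals. Second, one must handle lattice points $\alpha$ close to the boundary of $\Delta_{s^2}(\pi)$ where $D(\alpha,p)$ can diverge; these contribute negligibly because $D$ grows faster there. Finally, the most delicate point is the comparison at $\alpha = p$: both $f$ and $D$ vanish to second order, so the existence of a uniform gap $(1+\delta)f \le (1-\delta)D$ requires the Hessian inequality $\frac{1}{d}(A\otimes A) \prec \diag(1/p_{ij})$ on the tangent space to $\Delta_{s^2}(\pi)$ at $p$—and this is precisely what $Q(\pi, A/\sqrt{2d}) < 1$ encodes.
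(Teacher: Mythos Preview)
Your proposal is correct and follows essentially the same route as the paper: rewrite the exponent as $n f(\alpha_n)$, invoke Stirling/Sanov so that $\Pr(\alpha_n=\alpha)\asymp e^{-nD(\alpha,p)}$, and then for $Q<1$ bound a $(1+\delta)$-moment by the lattice sum $\sum_\alpha e^{-n\epsilon|\alpha-p|^2}$ (the paper controls this sum via a short geometric-series lemma rather than your Riemann-sum description), while for $Q>1$ a single witness $\alpha^\ast$ forces $\E e^{F_n}\to\infty$. The only point to tidy is rhetorical: you do not need compactness or ``continuity of $f/D$ at $p$'' (that ratio has only directional limits); the uniform bound $(1+\delta)f\le(1-\delta)D$ follows immediately from the definition $Q=\sup f/D$ once $(1+\delta)Q\le 1-\delta$.
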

 
Using H\"older's inequality, it is fairly straightforward to introduce the $\xi_n$ term:

\begin{lemma}\label{lem:UI-final}
  Define $X_{ij}$ as in Lemma~\ref{lem:xi}. Then
  \[
  \exp\left(\frac{1}{2d} \sum X_{ij} X_{k\ell} A_{ik} A_{j\ell} + \xi_n\right)
  \]
  is uniformly integrable if $Q(\pi, A/\sqrt{2d}) < 1$, and
  fails to be uniformly integrable if $Q(\pi, A/\sqrt{2d}) > 1$.
\end{lemma}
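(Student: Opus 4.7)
The plan is to derive Lemma~\ref{lem:UI-final} from Proposition~\ref{prop:ui} by treating $\xi_n$ as a negligible perturbation through Hölder's inequality, with the bound on $\E\exp(a_n\xi_n)$ from Lemma~\ref{lem:xi} controlling the error factor. A crucial preliminary observation is that $Q(\pi,\cdot)$ is homogeneous of degree two in its matrix argument, so $Q(\pi, c A/\sqrt{2d}) = c^{2}\,Q(\pi, A/\sqrt{2d})$. This gives the ``room'' we need to convert the UI assertion in Proposition~\ref{prop:ui} into a genuine $L^{1+\epsilon}$ bound.

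For the sufficient direction ($Q(\pi, A/\sqrt{2d}) < 1$), pick $r > 1$ with $r\,Q(\pi, A/\sqrt{2d}) < 1$. Applying Proposition~\ref{prop:ui} to $\sqrt{r}\,A$ in place of $A$ shows that $\exp\!\bigl(\tfrac{r}{2d}\sum X_{ij}X_{k\ell}A_{ik}A_{j\ell}\bigr)$ is UI and therefore uniformly bounded in $L^{1}$; equivalently, $\exp\!\bigl(\tfrac{1}{2d}\sum X_{ij}X_{k\ell}A_{ik}A_{j\ell}\bigr)$ is bounded in $L^{r}$. Choosing $r' \in (1,r)$ together with exponents $p, q > 1$ satisfying $\tfrac{1}{p}+\tfrac{1}{q}=1$ and $r'p \le r$, Hölder's inequality gives
\begin{equation*}
 \E \exp\!\left(r'\Big[\tfrac{1}{2d}\sum X_{ij}X_{k\ell}A_{ik}A_{j\ell} + \xi_n\Big]\right)
 \le \left(\E \exp\!\left(\tfrac{r'p}{2d}\sum X_{ij}X_{k\ell}A_{ik}A_{j\ell}\right)\right)^{1/p}
 \left(\E \exp(r'q\,\xi_n)\right)^{1/q}.
\end{equation*}
The first factor is bounded by the rescaled Proposition~\ref{prop:ui}, and the second is bounded (and in fact tends to one) by Lemma~\ref{lem:xi} with constant $a_n = r'q$. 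Thus the exponential in the lemma is bounded in $L^{r'}$ for some $r' > 1$, which implies uniform integrability.

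For the necessary direction ($Q(\pi, A/\sqrt{2d}) > 1$), run the same argument in reverse. Pick $c < 1$ with $c^{2} Q(\pi, A/\sqrt{2d}) > 1$ and decompose
\begin{equation*}
 \exp\!\left(\tfrac{c}{2d}\sum X_{ij}X_{k\ell}A_{ik}A_{j\ell}\right)
 = \exp\!\left(c\Big[\tfrac{1}{2d}\sum X_{ij}X_{k\ell}A_{ik}A_{j\ell} + \xi_n\Big]\right)\cdot\exp(-c\,\xi_n).
\end{equation*}
If the combined exponential in the lemma were UI, then, using the same homogeneity trick, it would be bounded in some $L^{r}$ with $r > 1$ (apply a scaled version of the sufficient-direction argument to itself). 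Raising to the $c$-th power with $c < 1$ and combining with an $L^{q}$ bound on $\exp(-c\,\xi_n)$ from Lemma~\ref{lem:xi} via Hölder would then force $\exp\!\bigl(\tfrac{c}{2d}\sum X_{ij}X_{k\ell}A_{ik}A_{j\ell}\bigr)$ to be UI, contradicting Proposition~\ref{prop:ui} applied to $\sqrt{c}\,A$.

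The main obstacle is the negative direction: the equivalence between uniform integrability and $L^{1+\epsilon}$-boundedness in this context is what allows the Hölder bookkeeping to close, and care is needed to phrase things so that UI of the perturbed exponential really does give enough $L^{p}$ control to back-propagate through $\exp(-c\xi_{n})$. The homogeneity of $Q$ is doing the essential work in both directions, ensuring that we can always trade the small loss from Hölder against a small loss in the scaling parameter while keeping the strict inequality $Q<1$ or $Q>1$ intact.
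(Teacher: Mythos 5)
Your sufficient direction is correct and is essentially the paper's argument: both exploit the degree-two homogeneity $Q(\pi,cA)=c^{2}Q(\pi,A)$ to create slack, then use H\"older's inequality to split off $\xi_n$ and control it via Lemma~\ref{lem:xi}. The only cosmetic difference is that the paper uses $n$-dependent conjugate exponents $a_n=n^{1/3}$, $b_n\to 1$ so that the coefficient on the quadratic form barely exceeds $1$, while you use fixed exponents $p,q$ and compensate by demanding $r'p\le r$; both choices keep the rescaled $Q$ below $1$ and yield a uniform $L^{r'}$ bound with $r'>1$, hence uniform integrability.

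The negative direction, however, has a genuine gap, and it is exactly the point you flag as ``the main obstacle.'' Your argument hinges on the implication ``if $Z_n=\exp(\tfrac{1}{2d}\sum X_{ij}X_{k\ell}A_{ik}A_{j\ell}+\xi_n)$ were UI, it would be bounded in $L^{r}$ for some $r>1$.'' That implication is false: uniform integrability is strictly weaker than boundedness in $L^{1+\epsilon}$ (de la Vall\'ee Poussin only gives a uniform bound on $\E\,\Phi(Z_n)$ for some convex $\Phi$ with superlinear growth, not a power). Without that $L^{r}$ bound you cannot back-propagate through $\exp(-c\xi_n)$ via H\"older to conclude that $\exp(\tfrac{c}{2d}\sum X_{ij}X_{k\ell}A_{ik}A_{j\ell})$ is UI, so the contradiction with Proposition~\ref{prop:ui} never closes; indeed, forward H\"older only yields $L^{1}$-boundedness of that quantity, which is compatible with failure of UI. The repair (and what the paper means by ``reverse H\"older'') is to argue directly rather than by contradiction: for $0<c<1$ with $cQ(\pi,A/\sqrt{2d})>1$, the reverse H\"older inequality gives
\begin{equation*}
\E Z_n \;\ge\; \Bigl(\E \exp\Bigl(\tfrac{c}{2d}\textstyle\sum X_{ij}X_{k\ell}A_{ik}A_{j\ell}\Bigr)\Bigr)^{1/c}\Bigl(\E \exp\bigl(\tfrac{c}{c-1}\xi_n\bigr)\Bigr)^{(c-1)/c},
\end{equation*}
where the second factor tends to $1$ by Lemma~\ref{lem:xi} and the first tends to infinity because the proof of Proposition~\ref{prop:UI-multinomial} shows the expectation itself diverges like $e^{n\epsilon}$ when the relevant $Q$ exceeds $1$ (a stronger fact than mere failure of UI, and the fact you actually need). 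Since a UI sequence has bounded first moments, $\E Z_n\to\infty$ rules out uniform integrability. Also note a minor bookkeeping slip: by your own homogeneity statement, replacing $A$ by $\sqrt{c}A$ scales $Q$ by $c$, not $c^{2}$, so the condition you want is $c\,Q(\pi,A/\sqrt{2d})>1$; your stronger condition $c^{2}Q>1$ happens to imply it for $c<1$, but the exponent is off.
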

\begin{proof}
Supposing that $Q(\pi, A/\sqrt{2d}) < 1$, we find some $\epsilon > 0$ such that
$Q(\pi, \sqrt{1+\epsilon} A / \sqrt{2d}) < 1$.
Set $a_n = n^{1/3}$ and $b_n = \frac{a_n}{a_n - 1}$ to be the H\"older conjugate of $a_n$. Setting
\begin{align}
W = \vec(X),
\label{eqn:defn-W}
\end{align}
H\"older's inequality and Lemma~\ref{lem:xi} give
\begin{align*}
  &\E_{\sigma,\tau} \exp\left((1+\frac{\epsilon}{2})\left(\frac 1{2d} \sum_{ijk\ell} X_{ij} X_{k\ell} A_{ik} A_{j\ell} + \xi_n\right)\right)\\
 &\le
 \left(\E_{\sigma,\tau} \exp\left(\frac{(1+\frac{\epsilon}{2})b_n}{2d} W^T (A^{\otimes 2}) W\right)\right)^{1/b_n}
 \left(\E \exp((1+\frac{\epsilon}{2})a_n \xi_n)\right)^{1/a_n} \\
 &\le
 \left(\E_{\sigma,\tau} \exp\left(\frac{(1+\frac{\epsilon}{2})b_n}{2d} W^T (A^{\otimes 2}) W\right)\right)^{1/b_n}.
\end{align*}
To check uniform integrability, we apply Proposition~\ref{prop:ui}. For sufficiently large $n$,
we have $b_n \le \frac{1 + \epsilon}{1+\frac{\epsilon}{2}}$ and
\[
 \exp\left(\frac {(1+\frac{\epsilon}{2})b_n}{2d} W^T A^{\otimes 2} W\right)
 \le \max\left\{1, \exp\left(\frac {(1 + \epsilon)}{2d} W^T A^{\otimes 2} W\right)\right\}.
\]
We see from the fact that $Q(\pi, \sqrt{1+\epsilon} A / \sqrt{2d}) < 1$ and
Proposition~\ref{prop:ui} that the right hand side above has a finite expectation.

To summarize, we have shown that if $Z = \exp(\frac{1}{2d} \sum X_{ij} X_{k\ell} A_{ik} A_{j\ell} + \xi_n)$ then
$\E Z^{(1+\epsilon/2)} < \infty$ for some $\epsilon > 0$. It follows that $Z$ is uniformly integrable, as claimed.

To show that $Q(\pi, A/\sqrt{2d}) > 1$ implies non-uniform integrability, requires an almost identical argument,
but using the reverse H\"older inequality instead of the usual H\"older inequality. We omit the details.
\end{proof}

The following lemma calculates the expected value of the exponential of a quadratic form of a Gaussian random vector.
\begin{lemma}\label{lem:gaussian-quadratic-limit}
  Take $Z \sim \normal(0, \Sigma)$, where $\Sigma = \diag(\pi)^{\otimes 2} - \pi^{\otimes 4}$.
  Recall that $\lambda_i$ denote the eigenvalues of $T$, with $1=\lambda_1 \geq \abs{\lambda_2} \geq \cdots \geq \abs{\lambda_s}$.
  If $d \lambda_2^2 < 1$ then
  \[
   \E \exp\left(\frac{1}{2d} Z^T A^{\otimes 2} Z\right) = \prod_{i,j=2}^s \frac{1}{\sqrt{1-d\lambda_i\lambda_j}}.
  \]
  Otherwise, $\E \exp\left(\frac{1}{2d} Z^T A^{\otimes 2} Z\right) = \infty$.
\end{lemma}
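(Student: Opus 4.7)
The plan is to reduce the claim to the standard moment generating function formula for a Gaussian quadratic form, and then simplify the relevant matrix using $A\pi=0$ together with the spectral relationship between $B$ and $T$.

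First I would invoke the identity $\E\exp(\tfrac12 Z^T Q Z)=\det(I-\Sigma Q)^{-1/2}$, valid whenever every eigenvalue of $\Sigma Q$ is strictly less than $1$, and apply it with $Q = A^{\otimes 2}/d$. The key algebraic simplification then comes from the assumption $\sum_j M_{ij}\pi_j=d$, which is exactly the statement that $A\pi = 0$. This implies $A^{\otimes 2}(\pi\otimes\pi)=(A\pi)\otimes(A\pi)=0$, so the rank-one piece $(\pi\otimes\pi)(\pi\otimes\pi)^T$ of $\Sigma$ is annihilated when multiplied by $A^{\otimes 2}$ on the right, leaving
\[
\Sigma A^{\otimes 2}=\diag(\pi)^{\otimes 2}A^{\otimes 2}=(\diag(\pi)A)\otimes(\diag(\pi)A)=d^2 B^{\otimes 2},
\]
so that $\Sigma Q = dB^{\otimes 2}$.

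Next I would identify the spectrum. Writing $B=T-\pi\mathbf{1}^T$ and noting that $\pi\mathbf{1}^T$ is precisely the spectral projector onto the Perron--Frobenius eigenspace (with normalization $\mathbf{1}^T\pi=1$), the standard deflation fact shows that $B$ has eigenvalues $0,\lambda_2,\dots,\lambda_s$. Consequently $B^{\otimes 2}$ has eigenvalues $\beta_i\beta_j$, of which only the pairs with $i,j\ge 2$ are nonzero, so
\[
\det(I-dB^{\otimes 2})=\prod_{i,j=1}^{s}(1-d\beta_i\beta_j)=\prod_{i,j=2}^{s}(1-d\lambda_i\lambda_j),
\]
and taking the inverse square root gives the advertised product. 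For the divergence claim, the formula is finite exactly when $d\lambda_i\lambda_j<1$ for all $i,j\ge 2$, which collapses to $d\lambda_2^2<1$ since $|\lambda_2|$ is maximal among $\{|\lambda_i|\}_{i\ge 2}$. If $d\lambda_2^2\ge 1$, projecting $Z$ onto the offending eigendirection of $\Sigma^{1/2}Q\Sigma^{1/2}$ reduces to a one-dimensional $\E\exp(\tfrac c2 W^2)$ with $c\ge 1$, which diverges.

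The main subtlety I would need to address is that $\Sigma$ is singular: $\Sigma\mathbf{1}_{s^2}=0$ because $\sum_{ij}\pi_i\pi_j=1$, reflecting the multinomial constraint $\sum_{ij} X_{ij}=0$. I would handle this by writing $Z=\Sigma^{1/2}W$ with $W$ a standard Gaussian on the range of $\Sigma$, applying the formula there in the form $\det(I-\Sigma^{1/2}Q\Sigma^{1/2})^{-1/2}$, and then using Sylvester's identity $\det(I-AB)=\det(I-BA)$ to recognize this as $\det(I-\Sigma Q)^{-1/2}$ computed in the full $s^2$-dimensional space, where the extra null eigenvalue of $\Sigma Q$ contributes only a trivial factor of $1$. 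Everything else is a routine eigenvalue bookkeeping, so this degeneracy check is the only place where care is actually required.
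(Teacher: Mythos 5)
Your proposal is correct and follows essentially the same route as the paper: both apply the standard Gaussian quadratic-form formula $\E\exp(\tfrac12 Z^TQZ)=\det(I-\Sigma Q)^{-1/2}$, use $A\pi=0$ to kill the $\pi^{\otimes 4}$ term so that $\tfrac1d\Sigma A^{\otimes 2}=dB^{\otimes 2}$, and identify the nonzero spectrum of $B$ with $\{\lambda_2,\dots,\lambda_s\}$ via rank-one deflation of the Perron eigenvalue. Your extra care with the singularity of $\Sigma$ and the explicit divergence argument when $d\lambda_2^2\ge 1$ are welcome details the paper leaves implicit, but the argument is the same.
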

\begin{proof}
A standard computation (see, e.g.~\cite{MathaiProvost:92}) shows that if $\mu_1, \dots, \mu_k$ denote the
eigenvalues of $\Sigma \tilde A $ then $\E \exp(Z^T \tilde A Z / 2) = \prod_i \frac{1}{\sqrt{1-\mu_k}}$.
Now,
\[
\Sigma A^{\otimes 2} = (\diag(\pi)^{\otimes 2} - \pi^{\otimes 4}) A^{\otimes 2} 
= (\diag(\pi) A)^{\otimes 2} - (\pi \pi^T A)^{\otimes 2}.
\]
Recall, however, that $A \pi = 0$. Hence, we are interested in the eigenvalues of
$(\diag(\pi) A)^{\otimes 2} = (dB)^{\otimes 2}$. Since the top eigenvalue of $T$ is 1 (with $1$
as its right-eigenvector and $\pi$ as its left-eigenvector), we see that
if $\lambda_1,\cdots,\lambda_s$ are the eigenvalues of $T$ with $\lambda_1 = 1$, then
\[
 \{d \lambda_i \lambda_j: i, j = 2, \dots, s\}
\]
are the eigenvalues of $\frac 1d \Sigma (A \otimes A)$.
\end{proof}

\begin{proof}[Proof of Theorem~\ref{thm:second-moment}]
First of all, note that
\[
 \frac{d\tilde \P_n(G, \sigma)}{d\Q_n} = \frac{Y_n}{\P_n(\Omega_n)} = (1 + o(1)) Y_n.
\]
Hence, it suffices to compute the limit of $\E_{\Q_n} Y_n^2$.

From Lemma~\ref{lem:second-moment-simpl}, we see that we need to calculate the limit of the quantity
\begin{align*}
  \E_{\sigma,\tau \in \Omega_n} \exp\left(\frac 1{2d} \sum_{ijk\ell} X_{ij} X_{k\ell} A_{ik} A_{j\ell} + \xi_n\right).
\end{align*}
Lemma~\ref{lem:UI-final} establishes that the above sequence is uniformly integrable.

Now, note that $(N_{ij})_{i,j=1}^s$ is distributed as a multinomial random vector with $n$ trials and probabilities
$\pi_i \pi_j$. In particular, $\E N_{ij} = \pi_i \pi_j$, $\Var(N_{ij}) = \pi_i \pi_j - (\pi_i \pi_j)^2$, and
$\Cov(N_{ij} N_{k\ell}) = -\pi_i \pi_j \pi_k \pi_\ell$ if $\{i,j\} \ne \{k,\ell\}$.
Since $X_{ij} = n^{-\frac{1}{2}}\left(N_{ij} - n \pi_i \pi_j\right)$, central limit theorem implies that $W$ converges in distribution to a Gaussian random vector,
$Z$ with mean $0$ and covariance matrix $\diag(\pi)^{\otimes 2} - \pi^{\otimes 4}$.
Using Lemma~\ref{lem:gaussian-quadratic-limit} shows us that
\begin{equation}\label{eq:second-moment-with-nu}
 \E_{\Q_n} Y_n^2 \to \exp(\nu_1 + \nu_2) \prod_{i,j=2}^s \frac{1}{\sqrt{1-d\lambda_i \lambda_j}}.
\end{equation}
Going back to Lemma~\ref{lem:nu}, we have
\[
 \nu_1 = -\frac{d}{2} \tr(B)^2 = -\frac 12 \sum_{i,j=2}^s d\lambda_i \lambda_j
\]
and
\[
 \nu_2 = -\frac{d^2}{4} \tr(B^2)^2 = -\frac 14 \sum_{i,j=2}^2 (d \lambda_i \lambda_j)^2.
\]
Hence, the right hand side of~\eqref{eq:second-moment-with-nu} is equal to
\[
 \prod_{i,j} \psi(d\lambda_i\lambda_j),
\]
as claimed.
\end{proof}

\section{Non-reconstructability}

%
In this section, we prove Theorem~\ref{thm:non-reconstruct}.
The following proposition is the main technical result that we use to prove Theorem~\ref{thm:non-reconstruct}.
It shows that under the uniform integrability condition, for any two fixed configurations on a finite set of nodes,
the total variation distance between the distribution on graphs conditioned on these two configurations respectively goes to zero.
\begin{proposition}\label{prop:TVdistance}
Suppose $Q\left(\pi,A/\sqrt{2d}\right) < 1$. Then, for any fixed $r > 0$, and for any two configurations $\left(a_1,a_2,\cdots, a_r\right)$ and
$\left(b_1,b_2,\cdots, b_r\right)$, we have:
\begin{align*}
  TV\left(\prob{G \middle\vert \sigma_u=a_u \mbox{ for } u\in[r]},
\prob{G \middle\vert \sigma_u=b_u \mbox{ for } u\in[r]}\right) = o(1),
\end{align*}
where $TV(\mathbb{P}_1,\mathbb{P}_2)$ denotes the total variation distance between the two distributions $\mathbb{P}_1$ and $\mathbb{P}_2$.
\end{proposition}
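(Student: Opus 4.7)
My plan is to reduce the total variation bound to an $L^2$ computation against $\Q_n$, and then to observe that the second-moment estimates from Theorem~\ref{thm:second-moment} are essentially insensitive to fixing the labels at $O(1)$ vertices. Let $\tilde{\P}_n^{(a)}$ denote the law of $G$ under $\P_n$ conditioned on $\{\sigma_u = a_u \text{ for } u \in [r]\} \cap \Omega_n$, and $\tilde{\P}_n^{(b)}$ analogously; since $\P_n(\Omega_n \mid \sigma_u = a_u \text{ for } u \in [r]) \to 1$ by standard multinomial concentration, it suffices to bound $TV(\tilde{\P}_n^{(a)}, \tilde{\P}_n^{(b)})$. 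Writing $f_a = d\tilde{\P}_n^{(a)}/d\Q_n$ and $f_b = d\tilde{\P}_n^{(b)}/d\Q_n$, Cauchy--Schwarz gives $2\,TV(\tilde{\P}_n^{(a)}, \tilde{\P}_n^{(b)}) = \|f_a - f_b\|_{L^1(\Q_n)} \le \|f_a - f_b\|_{L^2(\Q_n)}$. Expanding the square, I would then aim to show that $\E_{\Q_n} f_a^2$, $\E_{\Q_n} f_b^2$, and $\E_{\Q_n} f_a f_b$ all converge to the \emph{same} finite limit.

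For the cross moment, I would repeat the derivation of Lemma~\ref{lem:second-moment-simpl} verbatim, but restrict the outer sum over pairs $(\sigma, \tau)$ to labellings with $\sigma_u = a_u$ and $\tau_u = b_u$ for $u \in [r]$ and normalize by $\alpha\beta$, where $\alpha = \prod_u \pi_{a_u}$ and $\beta = \prod_u \pi_{b_u}$. The per-edge factor $\E_{\Q_n}[W_{uv}(G,\sigma) W_{uv}(G,\tau)]$ depends only on the labels at its two endpoints, so the derivation is unchanged and produces $\E_{\Q_n} f_a f_b$ as a conditional expectation of $\exp\!\bigl(\tfrac{1}{2d}\sum_{ijk\ell} X_{ij} X_{k\ell} A_{ik} A_{j\ell} + \nu_1 + \nu_2 + \xi_n\bigr)$ under $\P_n \otimes \P_n$ conditioned on $\sigma_u = a_u$, $\tau_u = b_u$ for $u \in [r]$, and $\sigma,\tau \in \Omega_n$. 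The self moments $\E_{\Q_n} f_a^2$ and $\E_{\Q_n} f_b^2$ have identical form, only with different constraints on the first $r$ coordinates.

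To identify the common limit, the key observation is that fixing labels at $r = O(1)$ vertices perturbs each $N_{ij}(\sigma,\tau)$ by at most $r$ and therefore each $X_{ij}$ by $O(n^{-1/2})$; the remaining $n - r$ coordinates still form an independent multinomial system, so under any of the three conditionings $(X_{ij})$ converges in distribution to the same Gaussian limit $Z$ with covariance $\diag(\pi)^{\otimes 2} - \pi^{\otimes 4}$ (further conditioning on $\Omega_n$ is harmless, being the event $\|X\|_\infty = O(a_n/\sqrt n)$, which is typical for the limit). Combined with the uniform integrability supplied by Lemma~\ref{lem:UI-final}, all three second moments converge to the constant $\prod_{i,j=2}^s \psi(d\lambda_i \lambda_j)$ already identified in Theorem~\ref{thm:second-moment}. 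Hence $\E_{\Q_n}(f_a - f_b)^2 \to 0$, and the total variation bound follows.

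The main obstacle I anticipate is porting the uniform integrability in Lemma~\ref{lem:UI-final} to the conditional setting: Proposition~\ref{prop:ui} is stated for the unconditional joint law of $(\sigma,\tau)$, and one must argue that it survives conditioning on $\{\sigma_u = a_u, \tau_u = b_u \text{ for } u \in [r], \sigma, \tau \in \Omega_n\}$. Since this event has probability bounded below by $\alpha\beta\,(1-o(1)) > 0$, the conditional expectation of any nonnegative quantity is at most a bounded constant times the unconditional expectation, so uniform integrability descends without change; everything else is essentially a re-run of the Section~5 analysis with cosmetic changes.
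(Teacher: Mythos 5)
Your proposal is correct and follows essentially the same route as the paper: Cauchy--Schwarz against $\Q_n$ to reduce the total variation distance to second moments, expansion into the cross terms $\E_{\Q_n} f_a^2$, $\E_{\Q_n} f_b^2$, $\E_{\Q_n} f_a f_b$, and a re-run of the Lemma~\ref{lem:second-moment-simpl} computation together with the uniform integrability of Lemma~\ref{lem:UI-final} to show all three converge to a common limit independent of the boundary labels. The only cosmetic difference is that the paper keeps the $r$ pinned vertices separate in the product over edges (producing an explicit extra factor that tends to $1$), whereas you absorb them into the counts $N_{ij}$ and note the resulting $O(n^{-1/2})$ perturbation of $X_{ij}$; your observation that uniform integrability descends under conditioning on an event of probability bounded below is the right way to justify the step the paper leaves implicit.
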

\begin{proof}
We will first prove the statement of the proposition with $\P_n$ replaced by
$\hat \P_n = (\P_n \mid \Omega_n)$; i.e., the
block model conditioned on having almost the expected label frequencies.

We start by using the definition of total variation distance:
\begin{align*}
& \quad TV\left(\probcond{G \middle\vert \sigma_u=a_u \mbox{ for } u\in[r]},
\probcond{G \middle\vert \sigma_u=b_u \mbox{ for } u\in[r]}\right) \\
&= \sum_G \abs{\probcond{G \middle\vert \sigma_u=a_u \mbox{ for } u\in[r]} -
\probcond{G \middle\vert \sigma_u=b_u \mbox{ for } u\in[r]}} \\
&= \sum_G \abs{\probcond{G \middle\vert \sigma_u=a_u \mbox{ for } u\in[r]} -
\probcond{G \middle\vert \sigma_u=b_u \mbox{ for } u\in[r]}} \frac{\sqrt{\probER{G}}}{\sqrt{\probER{G}}}\\
&\stackrel{(a)}{\leq} \left(\sum_G \probER{G}\right)^{1/2} \left(\sum_G \frac{\left(\probcond{G \middle\vert \sigma_u=a_u \mbox{ for } u\in[r]} -
\probcond{G \middle\vert \sigma_u=b_u \mbox{ for } u\in[r]}\right)^2}{\probER{G}}\right)^{1/2}\\
&= \left(\sum_G \frac{\left(\sum_{\sigtil} \probcond{\sigtil}\left(\probcond{G \middle\vert a,\sigtil} -
\probcond{G \middle\vert b,\sigtil}\right)\right)^2}{\probER{G}}\right)^{1/2},
\end{align*}
where $(a)$ follows from Cauchy-Schwartz inequality and $\sigtil$ denotes an assignment on $[n]\setminus [r]$.
We can expand the numerator as follows:
\begin{align*}
&\left(\sum_{\sigtil} \probcond{\sigtil}\left(\probcond{G \middle\vert a,\sigtil} -
\probcond{G \middle\vert b,\sigtil}\right)\right)^2 \\
&= \sum_{\sigtil,\tautil} \probcond{\sigtil}\probcond{\tautil} \left(
\probcond{G \middle\vert a,\sigtil}\probcond{G \middle\vert a,\tautil} + \probcond{G \middle\vert b,\sigtil}\probcond{G \middle\vert b,\tautil} \right.\\
&\qquad\qquad\qquad\qquad \left.- \probcond{G \middle\vert a,\sigtil}\probcond{G \middle\vert b,\tautil} - \probcond{G \middle\vert b,\sigtil}\probcond{G \middle\vert a,\tautil}
\right).
\end{align*}
We will now show that the value of
\begin{align*}
\sum_{\sigtil,\tautil} \probcond{\sigtil}\probcond{\tautil} \sum_G \frac{\probcond{G \middle\vert a,\sigtil}\probcond{G \middle\vert b,\tautil}}{\probER{G}},
\end{align*}
is independent of $a$ and $b$ (upto $o(1)$). This will prove our claim.
 Define
 \[
  W_{uv}(G, \sigma)
  \defas \begin{cases}
     \frac{M_{\sigma_u,\sigma_v}}{d} &\text{if $(u, v) \in E(G)$,} \\
     \frac{1 - \frac{M_{\sigma_u,\sigma_v}}{n}}{1 - \frac dn} &\text{if $(u, v) \not \in E(G)$,}
    \end{cases}
 \]
and let $q_{ijk\ell} = (M_{ik} - d)(M_{j\ell} - d)/n = A_{ik} A_{j\ell} / n$,
and $t_{ijk\ell} = \frac {q_{ijk\ell}} d + \frac {q_{ijk\ell}} n - \frac{q_{ijk\ell}^2}{2d^2}$.
We have:
\begin{align}
&\sum_{\sigtil,\tautil} \probcond{\sigtil}\probcond{\tautil} \sum_G \frac{\probcond{G \middle\vert a,\sigtil}\probcond{G \middle\vert b,\tautil}}{\probER{G}}\nonumber\\
&=\sum_{\sigtil,\tautil} \probcond{\sigtil}\probcond{\tautil}\prod_{u,v \in [n]} \E_{\Q_n} [W_{uv}(G, a,\sigtil) W_{uv}(G, b,\tautil)] \nonumber\\
&= \hat\E_{\sigtil,\tautil} \prod_{u,v \in [n]\setminus [r]} \left(1+
t_{\sigtil_u\tautil_u\sigtil_v\tautil_v}+\bigoh{\frac{1}{n^3}}\right)
\prod_{\stackrel{u\in[r]}{v \in [n]\setminus [r]}} \left(1+
t_{a_u b_u\sigtil_v\tautil_v}+\bigoh{\frac{1}{n^3}}\right)
\prod_{u,v\in[r]} \left(1+
t_{a_u b_u a_v b_v}+\bigoh{\frac{1}{n^3}}\right)\nonumber\\
&= \hat\E_{\sigtil,\tautil} \prod_{i,j,k,\ell \in [s]} \left(1+
t_{ijk\ell}+\bigoh{\frac{1}{n^3}}\right)^{\Ntil_{ijk\ell}} 
\prod_{\stackrel{u\in[r]}{i,j \in [s]}} \left(1+
t_{a_u b_u ij}+\bigoh{\frac{1}{n^3}}\right)^{\Ntil_{ij}}
\prod_{u,v\in[r]} \left(1+
t_{a_u b_u a_v b_v}+\bigoh{\frac{1}{n^3}}\right),\label{eqn:main}
\end{align}
where $\Ntil_{ijk\ell} = \left|\{\{u, v\}: \sigtil_u = i, \tautil_u = j, \sigtil_v = k, \tautil_v = \ell\}\right|$, and
$\Ntil_{ij} = \left|\set{v: \sigtil_v=i, \tautil_v=j}\right|$.
We first note that the last term in \eqref{eqn:main} can be simplified as follows:
\begin{align*}
&\prod_{u,v\in[r]} \left(1+t_{a_u b_u a_v b_v}+\bigoh{\frac{1}{n^3}}\right)
= \prod_{u,v\in[r]} \left(1+\bigoh{\frac{1}{n}}\right)
= \left(1+\bigoh{\frac{1}{n}}\right)^{r^2} = 1+\bigoh{\frac{1}{n}}.
\end{align*}
For the second term in \eqref{eqn:main}, we have:
\begin{align}
\prod_{{i,j \in [s]}} \left(1+
t_{a_u b_u ij}+\bigoh{\frac{1}{n^3}}\right)^{\Ntil_{ij}}
&=\prod_{i,j \in [s]} \left(1+
\frac{q_{a_u b_u ij}}{d}+\bigoh{\frac{1}{n^2}}\right)^{\Ntil_{ij}} \nonumber\\
&\stackrel{(\zeta_1)}{=}\prod_{i,j \in [s]} \exp\left(\Ntil_{ij}\left(\frac{q_{a_u b_u ij}}{d}+\bigoh{\frac{1}{n^2}}\right)\right) \nonumber\\
&\stackrel{(\zeta_2)}{=} (1+o(1)) \prod_{i,j \in [s]} \exp\left(\frac{q_{a_u b_u ij}}{d} \cdot \Ntil_{ij} \right) \nonumber\\
&= (1+o(1))\prod_{i,j \in [s]} \exp\left(\frac{nq_{a_u b_u ij}}{d} \cdot \left(\frac{\Ntil_{ij}}{n}-\pi_i \pi_j\right) \right) \cdot
	\exp\left(\frac{nq_{a_u b_u ij}}{d} \cdot \pi_i \pi_j \right), \label{eqn:term2}
\end{align}
where $(\zeta_1)$ follows from the fact that $1+x = \exp\left(x + \bigoh{x^2}\right)$ and $(\zeta_2)$ follows from the fact that $\Ntil_{ij} < n$.
We first note that:
\begin{align*}
\prod_{i,j \in [s]} \exp\left(\frac{nq_{a_u b_u ij}}{d} \cdot \pi_i \pi_j \right)
&= \prod_{i,j \in [s]} \exp\left(\frac{\pi_i \pi_j A_{a_u i}A_{b_u j}}{d}\right) \\
&= \exp\left(\sum_{i,j \in [s]} \frac{\pi_i \pi_j A_{a_u i}A_{b_u j}}{d}\right) \\
&= \exp\left( \frac{\left(\sum_{i\in [s]}\pi_i A_{a_u i}\right)\left(\sum_{j\in [s]}\pi_j A_{b_u j}\right)}{d}\right)
= 1.
\end{align*}
Looking now at the first two terms of \eqref{eqn:main} we obtain (using \eqref{eqn:term2}):
\begin{align*}
&\hat\E_{\sigtil,\tautil} \prod_{i,j,k,l \in [s]} \left(1+
t_{ijk\ell}+\bigoh{\frac{1}{n^3}}\right)^{\Ntil_{ijk\ell}}
\prod_{\stackrel{u \in [r]}{i,j \in [s]}} \exp\left(\frac{nq_{a_u b_u ij}}{d} \cdot \left(\frac{\Ntil_{ij}}{n}-\pi_i \pi_j\right) \right) \\
&\stackrel{(\zeta_1)}{=} (1+o(1)) \hat\E_{\sigtil,\tautil} \exp\left(
  \sum_{ijk\ell} \Ntil_{ijk\ell} t_{ijk\ell}
\right)
\prod_{\stackrel{u \in [r]}{i,j \in [s]}} \exp\left(\frac{nq_{a_u b_u ij}}{d} \cdot \frac{\Xtil_{ij}}{\sqrt{n}} \right) \\
&\stackrel{(\zeta_2)}{=}
(1+o(1))\hat\E_{\sigtil,\tautil}
\exp\left(\frac 1{2d} \sum_{ijk\ell} \Xtil_{ij} \Xtil_{k\ell} A_{ik} A_{j\ell} + \nu_1 + \nu_2 + \xitil_n\right)
\prod_{\stackrel{u \in [r]}{i,j \in [s]}} \exp\left(\frac{nq_{a_u b_u ij}}{d} \cdot \frac{\Xtil_{ij}}{\sqrt{n}} \right) \\
&= (1+o(1))\exp\left(\nu_1+\nu_2\right)
\hat\E_{\sigtil,\tautil}
\exp\left(\frac 1{2d} \sum_{ijk\ell} \Xtil_{ij} \Xtil_{k\ell} A_{ik} A_{j\ell} + \xitil_n\right)
\prod_{\stackrel{u \in [r]}{i,j \in [s]}} \exp\left(\frac{nq_{a_u b_u ij}}{d} \cdot \frac{\Xtil_{ij}}{\sqrt{n}} \right) \\
\end{align*}
where $\Xtil_{ij}\defas n^{-1/2}\left(\Ntil_{ij}-n \pi_i\pi_j\right)$, $(\zeta_1)$ follows from the fact that $1+x = \exp\left(x + \bigoh{x^2}\right)$ and since $\Ntil_{ijk\ell}< n^2$, $(\zeta_2)$ follows from Lemma~\ref{lem:Nijkl-Xijkl}.
Note that $\exp\left(\frac 1{2d} \sum_{ijk\ell} \Xtil_{ij} \Xtil_{k\ell} A_{ik} A_{j\ell} + \xitil_n\right)$ is
independent of $a$ and $b$ and from Lemma~\ref{lem:UI-final}, we also know that it is uniformly integrable.
On the other hand, since $\abs{\Xtil_{ij}} \leq \sqrt{n}$, we see that $\exp\left(\sum_{u \in [r],i,j \in [s]} \frac{nq_{a_u b_u ij}}{d}
\cdot \frac{\Xtil_{ij}}{\sqrt{n}}\right)$
is uniformly bounded and hence is uniformly integrable. Moreover, $\Xtil_{ij} \rightarrow \Normal\left(0,\pi_i\pi_j - (\pi_i\pi_j)^2\right)$.
So we see that,
\begin{align*}
\hat\E_{\sigtil,\tautil} {\exp\left(\frac 1{2d} \sum_{ijk\ell} \Xtil_{ij} \Xtil_{k\ell} A_{ik} A_{j\ell} + \xitil_n\right) \cdot \prod_{\stackrel{u \in [r]}{i,j \in [s]}} \exp\left(\frac{n q_{a_u b_u ij}}{d} \cdot \frac{\Xtil_{ij}}{\sqrt{n}} \right)}
\end{align*}
converges to a finite quantity that is independent of $a$ and $b$.
This proves the statement of the proposition with $\P_n$ replaced by
$\hat \P_n = (\P_n \mid \Omega_n)$.
Noting that
\begin{align*}
TV\left(\prob{G \middle\vert \sigma_u=a_u \mbox{ for } u\in[r]},
\probcond{G \middle\vert \sigma_u=a_u \mbox{ for } u\in[r]}\right) = o(1), \; \forall \; a
\end{align*}
gives us the desired result.
\end{proof}

In order to prove Theorem~\ref{thm:non-reconstruct}, we use the following lemma which is an easy consequence of
Proposition~\ref{prop:TVdistance}.
\begin{lemma}\label{lem:condTV}
Suppose $Q\left(\pi,A/\sqrt{2d}\right) < 1$. Then, for any set $S$ such that $|S|$ is a constant, $u \notin S$, we have:
\begin{align*}
  \expec{TV\left(\prob{\sigma_u \middle\vert G, \sigma_S}, \pi \right) \middle\vert \sigma_S} = o(1).
\end{align*}
\end{lemma}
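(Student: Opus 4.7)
The plan is to derive Lemma~\ref{lem:condTV} as a direct consequence of Proposition~\ref{prop:TVdistance} via a standard Bayesian manipulation that converts ``the graph is nearly insensitive to any single label'' into ``any single label is nearly independent of the graph.'' Note first that by vertex exchangeability of the block model, Proposition~\ref{prop:TVdistance} holds not only for labels indexed by $[r]$, but for labels indexed by any fixed set of $r$ vertices; in particular we may apply it to the set $S \cup \{u\}$, which has constant size.

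The key identity is Bayes' rule combined with the tower decomposition of the marginal of $G$. Writing $\P(\sigma_u = i, G \mid \sigma_S) = \pi_i \, \P(G \mid \sigma_u = i, \sigma_S)$ and
\[
\P(G \mid \sigma_S) = \sum_{j \in [s]} \pi_j \, \P(G \mid \sigma_u = j, \sigma_S),
\]
we obtain
\[
\P(\sigma_u = i, G \mid \sigma_S) - \pi_i \, \P(G \mid \sigma_S) = \pi_i \sum_{j \in [s]} \pi_j \bigl(\P(G \mid \sigma_u = i, \sigma_S) - \P(G \mid \sigma_u = j, \sigma_S)\bigr).
\]

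Now unfold the conditional expectation of the total variation distance. Using the definition of TV together with the above identity and the triangle inequality,
\begin{align*}
\expec{TV\left(\P(\sigma_u \mid G, \sigma_S), \pi\right) \mid \sigma_S}
&= \tfrac{1}{2} \sum_G \sum_{i \in [s]} \bigl|\P(\sigma_u = i, G \mid \sigma_S) - \pi_i \, \P(G \mid \sigma_S)\bigr| \\
&\le \tfrac{1}{2} \sum_{i, j \in [s]} \pi_i \pi_j \sum_G \bigl|\P(G \mid \sigma_u = i, \sigma_S) - \P(G \mid \sigma_u = j, \sigma_S)\bigr| \\
&= \sum_{i, j \in [s]} \pi_i \pi_j \, TV\bigl(\P(G \mid \sigma_u = i, \sigma_S), \, \P(G \mid \sigma_u = j, \sigma_S)\bigr).
\end{align*}

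Finally, apply Proposition~\ref{prop:TVdistance} to each of the finitely many pairs of configurations $(\sigma_S, i)$ and $(\sigma_S, j)$ on the constant-sized set $S \cup \{u\}$: each TV term is $o(1)$. Summing the finitely many such terms (weighted by the $\pi_i \pi_j$, which sum to $1$) yields the desired $o(1)$ bound. There is no real obstacle here; the entire content of the lemma is a clean reformulation of Proposition~\ref{prop:TVdistance}, with the only subtlety being the (trivial) appeal to vertex exchangeability so that the proposition applies to the label set $S \cup \{u\}$ rather than to $[r]$.
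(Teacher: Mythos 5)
Your proof is correct and follows essentially the same route as the paper's: Bayes' rule converts the posterior total variation into (a mixture of) total variation distances between the conditional graph distributions, which are then controlled by Proposition~\ref{prop:TVdistance}. The only difference is cosmetic -- you explicitly expand $\P(G\mid\sigma_S)$ as a $\pi$-mixture and use the triangle inequality to reduce to pairwise comparisons, a convexity step the paper leaves implicit when it writes $\sum_i \pi(i)\, TV\bigl(\P(G\mid\sigma_u=i,\sigma_S),\P(G\mid\sigma_S)\bigr)=o(1)$ and cites the proposition directly.
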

\begin{proof}
\begin{align*}
  \expec{TV\left(\prob{\sigma_u \middle\vert G, \sigma_S}, \pi \right) \middle\vert \sigma_S}
  &= \sum_{\sigma_u} \prob{\sigma_u} \sum_G \abs{\frac{\prob{G \middle\vert \sigma_u, \sigma_S}}{\prob{G \middle\vert \sigma_S}} - 1}
	\prob{G \middle\vert \sigma_S} \\
  &= \sum_{i} \pi(i) TV\left(\prob{G \middle\vert \sigma_u=i, \sigma_S},\prob{G \middle\vert \sigma_S}\right) = o(1),
\end{align*}
where the last step follows from Proposition~\ref{prop:TVdistance}.
\end{proof}

We are now ready to prove Theorem~\ref{thm:non-reconstruct}.
\begin{proof}[Proof of Theorem~\ref{thm:non-reconstruct}]
We will show that $\lim_{n \to \infty} \expec{\overlap(\calA(G), \sigma)} = 0$. Theorem~\ref{thm:non-reconstruct}
then follows from Markov's inequality. We first bound $\expec{\overlap(\calA(G), \sigma)}$ as follows:
\begin{align}
  \expec{\overlap(\sigma,\calA(G))}
    &= \frac{1}{n}\expec{\max_\rho \sum_{i=1}^s \left(N_{i \rho(i)} (\sigma, \calA(G)) - \frac 1n N_i(\sigma) N_{\rho(i)}(\calA(G))\right)} \nonumber\\
    &\leq  \frac{1}{n}\sum_\rho \expec{\abs{\sum_{i=1}^s \left(N_{i \rho(i)} (\sigma, \calA(G)) - \frac 1n N_i(\sigma) N_{\rho(i)}(\calA(G))\right)}}.
\label{eqn:overlap-bound}
\end{align}
We will now show that each of the terms in the above summation goes to zero. Wlog, let $\rho$ be identity. Fix $i\in [s]$ and
consider the term $\E{\abs{\left(N_{i i} - \frac 1n N_i(\sigma) N_{i}(\calA(G))\right)}}$ (for brevity,
we suppress $\sigma, \calA(G)$ in $N_{i i} (\sigma, \calA(G))$).
Using Jensen's inequality, it is sufficient to bound
\begin{align}
\E{\left(N_{i i} - \frac 1n N_i(\sigma) N_{i}(\calA(G))\right)^2}
&= \expec{N_{ii}^2 - \frac 2n N_{ii}N_i(\sigma) N_{i}(\calA(G)) + \frac{1}{n^2} N_i^2(\sigma) N_{i}^2(\calA(G))}.
\label{eqn:Niibound}
\end{align}
We will now calculate each of the above three terms.
\begin{align}
\E N_{ii}^2 = \E \left(\sum_u \indicator{\sigma_u = i} \indicator{\calA(G)_u = i}\right)^2
&= \sum_{u,v} \E \indicator{\sigma_u = i} \indicator{\calA(G)_u = i} \indicator{\sigma_v = i} \indicator{\calA(G)_v = i} \nonumber\\
& = \sum_{u,v} \E \indicator{\sigma_u = i} \indicator{\calA(G)_u = i} \indicator{\sigma_v = i} \indicator{\calA(G)_v = i} \nonumber\\
& = \sum_{u,v} \expec{ \expec{\indicator{\sigma_u = i} \indicator{\calA(G)_u = i} \indicator{\sigma_v = i} \indicator{\calA(G)_v = i}\middle\vert G}} \nonumber\\
& = \sum_{u,v} \expec{ \expec{\indicator{\sigma_u = i} \indicator{\sigma_v = i} \middle\vert G} \indicator{\calA(G)_u = i} \indicator{\calA(G)_v = i}} \nonumber\\
&= \left(\pi(i)^2 \expec{ \indicator{\calA(G)_u = i} \indicator{\calA(G)_v = i}} + o(1) \right) n^2,
\label{eqn:Niibound1}
\end{align}
where the last step follows from Lemma~\ref{lem:condTV}.
Coming to the second term, we have:
\begin{align}
\E N_{ii} N_i(\sigma) N_{i}(\calA(G))
&= \E \left(\sum_u \indicator{\sigma_u = i} \indicator{\calA(G)_u = i}\right)
	\left(\sum_u \indicator{\sigma_u = i}\right)
	\left(\sum_u \indicator{\calA(G)_u = i}\right) \nonumber\\
&= \sum_{u,v,w} \expec{\expec{ \indicator{\sigma_u = i} \indicator{\calA(G)_u = i}
	\indicator{\sigma_v = i} \indicator{\calA(G)_w = i} \middle\vert G}} \nonumber\\
&= \sum_{u,v,w} \expec{\expec{ \indicator{\sigma_u = i} \indicator{\sigma_v = i} \middle\vert G}
	\indicator{\calA(G)_u = i} \indicator{\calA(G)_w = i}} \nonumber\\
&= \left(\pi(i)^2 \expec{ \indicator{\calA(G)_u = i} \indicator{\calA(G)_v = i}} + o(1) \right) n^3,
\label{eqn:Niibound2}
\end{align}
where the last step again follows from Lemma~\ref{lem:condTV}.
A similar argument shows that
\begin{align}
\E N_i^2(\sigma) N_{i}^2(\calA(G)) = \left(\pi(i)^2 \expec{ \indicator{\calA(G)_u = i} \indicator{\calA(G)_v = i}} + o(1) \right) n^4.
\label{eqn:Niibound3}
\end{align}
Plugging~\eqref{eqn:Niibound1},~\eqref{eqn:Niibound2} and~\eqref{eqn:Niibound3} in~\eqref{eqn:Niibound} shows that
\begin{align*}
\E{\left(N_{i i} - \frac 1n N_i(\sigma) N_{i}(\calA(G))\right)^2} = o(n^2). 
\end{align*}
This finishes the proof.
\end{proof}

\section{Examples}\label{sec:examples}
In this section, we will present a proof of Proposition~\ref{prop:ex1}.
We use the following lemma, which is a restatement of Lemma~4.7 from Mossel et al. \cite{MoNeSl:13}.
It establishes an approximate Markov structure on the labels of two sets of nodes with a small separator.
\begin{lemma}(Restatement of Lemma~4.7 from \cite{MoNeSl:13})\label{lem:separation}
Let $A=A(G)$, $B=B(G)$ and $C=C(G) \subset V$ be a (random) partition of $V$ such that $B$ separates $A$ and $C$ in $G$.
If $\abs{A \cup B} = o(\sqrt{n})$ for a.a.e. $G$, then
\begin{align*}
  \prob{\sigma_A \middle\vert \sigma_{B\cup C}, G} = (1+o(1))\prob{\sigma_A \middle\vert \sigma_{B}, G_{A\cup B}},
\end{align*}
for a.a.e. $G$ and $\sigma$.
\end{lemma}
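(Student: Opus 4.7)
The plan is to decompose the joint density $\P_n(\sigma, G)$ along the partition $(A,B,C)$ and show that the only factor coupling $\sigma_A$ to $\sigma_C$ is, to leading order, independent of $\sigma_A$ for typical labellings. Since $B$ separates $A$ from $C$ in $G$, there are no $A$-$C$ edges, so
\begin{align*}
\P_n(\sigma, G) \;=\; \prod_{v \in V} \pi_{\sigma_v} \cdot f(\sigma_A, \sigma_B; G_{A \cup B}) \cdot g(\sigma_B, \sigma_C; G_{B \cup C}) \cdot h(\sigma_A, \sigma_C),
\end{align*}
where $f$ and $g$ collect the edge and non-edge factors internal to $A \cup B$ and $B \cup C$ respectively, and the cross term
\begin{align*}
h(\sigma_A, \sigma_C) \;=\; \prod_{u \in A,\, v \in C} \left(1 - \frac{M_{\sigma_u \sigma_v}}{n}\right)
\end{align*}
accounts for the $A$-$C$ non-edges. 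Normalising over $\sigma_A$ cancels $g$ and the labels outside $A$; an analogous computation for $\P_n(\sigma_A \mid \sigma_B, G_{A \cup B})$ gives the same ratio with the $h$-factors removed. Hence it suffices to show that $h(\sigma_A, \sigma_C)$ equals a quantity independent of $\sigma_A$ up to a $1 + o(1)$ factor, uniformly in $\sigma_A$ and for a.a.e.\ $\sigma_C$.

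Using $\log(1-x) = -x + O(x^2)$, I expand
\begin{align*}
\log h(\sigma_A, \sigma_C) \;=\; -\sum_{u \in A} \sum_{j=1}^s \frac{N_j^C}{n} M_{\sigma_u j} + O(|A|/n),
\end{align*}
where $N_j^C = |\{v \in C: \sigma_v = j\}|$. A Hoeffding bound gives $N_j^C = \pi_j |C| + O(\sqrt{n \log n})$ for a.a.e.\ $\sigma$; combined with $|C| = n - o(\sqrt n)$ and the balanced-degree hypothesis $\sum_j \pi_j M_{ij} = d$ (independent of $i$), this yields
\begin{align*}
\sum_j \frac{N_j^C}{n} M_{\sigma_u j} \;=\; d + O(\sqrt{\log n / n})
\end{align*}
uniformly over $\sigma_u \in [s]$. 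Summing over $u \in A$ and invoking $|A| = o(\sqrt n)$, we obtain $\log h(\sigma_A, \sigma_C) = -d|A| + o(1)$ with the $o(1)$ uniform in $\sigma_A$. Substituting into the ratio expressions, the $h$-factors cancel between numerator and denominator up to $1+o(1)$, which proves the claim.

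The main obstacle is making the $\log h$ estimate uniform in $\sigma_A$. A priori, fluctuations of $N_j^C$ of order $\sqrt n$ translate into a per-vertex bias of order $1/\sqrt n$ that \emph{depends} on $\sigma_u$, and summing over $u \in A$ would leave a total bias of order $|A|/\sqrt n$ which need not vanish. The balanced-degree assumption is precisely what eliminates the $\sigma_u$-dependent leading term after averaging against a typical $\sigma_C$; the remaining next-order error is then genuinely $o(1)$ thanks to the hypothesis $|A \cup B| = o(\sqrt n)$.
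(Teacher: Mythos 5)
First, a point of comparison: the paper does not prove this lemma at all --- it is quoted from Lemma~4.7 of \cite{MoNeSl:13}, with only a remark that the proof there yields the stated (slightly stronger) conclusion. So there is no in-paper argument to match yours against. Judged on its own, your strategy is the right one and is the standard route: factor $\P_n(\sigma,G)$ into an $A\cup B$ part, a $B\cup C$ part, and the $A$--$C$ non-edge factor $h(\sigma_A,\sigma_C)$ (legitimate, since $B$ separating $A$ from $C$ forces all $A$--$C$ pairs to be non-edges), observe that the conditional ratio only sees $h$ beyond the local model on $A\cup B$, and show that $h$ is constant in $\sigma_A$ up to $1+o(1)$, using the balanced-degree hypothesis to kill the leading $\sigma_u$-dependence.

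There is, however, a genuine quantitative gap in the final estimate. From $N_j^C = \pi_j|C| + O(\sqrt{n\log n})$ you get a per-vertex, $\sigma_u$-dependent error of $O(\sqrt{\log n/n})$ in $\sum_j (N_j^C/n)M_{\sigma_u j}$; summing over $u\in A$ with $|A|=o(\sqrt n)$ gives a total $\sigma_A$-dependent error of $o(\sqrt n)\cdot O(\sqrt{\log n/n}) = o(\sqrt{\log n})$, which is \emph{not} $o(1)$. For instance $|A|=\sqrt n/\log\log n$ satisfies the hypothesis but defeats your bound, so the $h$-factors need not cancel to $1+o(1)$ as written. The fix is to drop the wasteful $\log n$ from the concentration step: multinomial fluctuations are $O_P(\sqrt n)$, so for \emph{any} $\omega_n\to\infty$ one has $\max_j|N_j^C-\pi_j|C||\le \omega_n\sqrt n$ for a.a.e.\ $(G,\sigma)$, and since $|A\cup B|\le \epsilon_n\sqrt n$ for a.a.e.\ $G$ for some deterministic $\epsilon_n\to 0$, you may choose $\omega_n\to\infty$ slowly enough that $\omega_n\epsilon_n\to 0$; the total $\sigma_A$-dependent error is then $O(\omega_n\epsilon_n)=o(1)$. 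Two smaller repairs: (i) $C=C(G)$ is random and $\sigma$ is not independent of $G$, so do not apply Hoeffding to $\sigma_C$ directly --- instead write $N_j^C = N_j(\sigma)-N_j^{A\cup B}$, apply concentration to the unconditional multinomial counts $N_j(\sigma)$, and absorb $N_j^{A\cup B}\le|A\cup B|=o(\sqrt n)$ into the error; (ii) the leading constant is $-d|A||C|/n$ rather than $-d|A|$, though this is harmless since it is independent of $\sigma_A$ and cancels in the normalisation (and differs from $-d|A|$ by $o(1)$ anyway).
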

\noindent\textbf{Remark}: Lemma~4.7 from \cite{MoNeSl:13} only states that
\begin{align*}
  \prob{\sigma_A \middle\vert \sigma_{B\cup C}, G} = (1+o(1))\prob{\sigma_A \middle\vert \sigma_{B}, G}.
\end{align*}
However, its proof directly gives us the stronger statement above.
We are now ready to prove Proposition~\ref{prop:ex1}.
\begin{proof}[Proof of Proposition~\ref{prop:ex1}]
We will prove the proposition by showing that the conclusion of Lemma~\ref{lem:condTV} holds i.e.,
for any set $S$ of constant size and $u\notin S$,
\begin{align}
  \expec{TV\left(\prob{\sigma_u \middle\vert G, \sigma_S}, \pi \right) \middle\vert \sigma_S} = o(1).
\label{eqn:ex1-int}
\end{align}
Since the size of the largest component is $\bigoh{\log n}$, $u$ and $S$
are disconnected a.a.s. Choosing $A$ to be the component of $u$, $B$ to be $\emptyset$ and $C$ to be $V\setminus A$ in
Lemma~\ref{lem:separation}, we see
that $\sigma_A$ and $\sigma_S$ are a.a.s. independent given $G_{A}$. Hence $\sigma_u$ and $\sigma_S$ are also a.a.s. independent
given $G_A$. So, we see that
\begin{align*}
  TV\left(\prob{\sigma_u \middle\vert \sigma_{S}, G}, \prob{\sigma_u \middle\vert G_A}\right) \rightarrow 0 \mbox{ for a.e. } G.
\end{align*}
In order to show \eqref{eqn:ex1-int}, it suffices to show that
\begin{align*}
  TV\left(\prob{\sigma_u \middle\vert G_A}, \pi\right) \rightarrow 0 \mbox{ for a.e. } G.
\end{align*}
This in turn follows if we show that
\begin{align*}
  TV\left(\prob{G_A \middle\vert \sigma_u = 1}, \prob{G_A \middle\vert \sigma_u = 2} \right) \to 0 \text{ for a.e. } G.
\end{align*}
This is clearly true since
\begin{itemize}
  \item	$\P_n \left(G_A \middle\vert \sigma_u = 1\right) = \Q_{pn}\left(G_A\right)$ and
$\P_n \left(G_A \middle\vert \sigma_u = 2\right) = \Q_{(1-p)n}\left(G_A\right)$,
  \item	$\lim_{r \rightarrow \infty} \Q_n \left(\abs{G_A} > r\right) = 0$, and
  \item	$\Q_n\left(G_A \mid \abs{G_A} = r\right)$ converges in distribution for every fixed $r$.
\end{itemize}
This proves~\eqref{eqn:ex1-int}. The rest of the proof is the same as that of Theorem~\ref{thm:non-reconstruct}.
\end{proof}

\section{Open problems}\label{sec:problems}

Our results show that the Kesten-Stigum bound is not the threshold for reconstructability
in the stochastic block model. Indeed, Propositions~\ref{prop:reconstruction-belowKS}
and~\ref{prop:ex1} show that even for the two cluster models with a fixed partition size,
reconstructability does not have a threshold behavior in $\lambda_2^2 d$.
\begin{question}
What is the precise boundary between reconstructability and non-reconstructability in unbalanced two cluster models?
\end{question}

We obtain non-distinguishability and non-reconstructability by showing the existence of a certain second moment.
The existence of any $(1+\epsilon)$ moment suffices for part of our results. However, calculating
such moments seems much harder.
\begin{question}
Characterize the region where the $1+\epsilon$ moment is finite for some $\epsilon < 1$.
\end{question}

The function $Q$ in our uniform integrability condition is not explicit.
Currently, the only way we can estimate $Q$ is via numerical optimization.
\begin{question}
Can we evaluate $Q$ explicitly? Can we obtain good bounds for it?
\end{question}

Finally, we would like to stress that the most novel contribution of our work is to relate non-reconstructability
with non-distinguishability. We believe that this connection might prove useful in other contexts where
non-reconstructability results have so far proved elusive.


\newcommand{\etalchar}[1]{$^{#1}$}

\newpage
\appendix
\section{UI and multinomials}\label{sec:UI-multinomials}

Here, we restate and prove Proposition~\ref{prop:ui}.
Recall that $\Delta_s$ denotes the set $\{(\alpha_1, \dots, \alpha_s) : \alpha_i \ge 0 \text{ and }\sum_i \alpha_i = 1\}$, and
that $\Delta_{s^2}(\pi)$ denotes the set of $(\alpha_{11} \dots, \alpha_{ss})$ such that
\begin{align*}
  \alpha_{ij} & \ge 0 \text{ for all $i, j$,} \\
  \sum_{i=1}^s \alpha_{ij} &= \pi_j \text{ for all $j$, and} \\
  \sum_{j=1}^s \alpha_{ij} &= \pi_i \text{ for all $i$.}
\end{align*}

In what follows, we fix an $s^2 \times s^2$ matrix $A$ and some $\pi \in \Delta_s$.
We define $p \in \Delta_{s^2}(\pi)$ by $p_{ij} = \pi_i \pi_j$ (or alternatively, $p = \pi^{\otimes 2}$),
and we take $N \sim \Multinom(n, p)$ and $X = (N - np)/\sqrt{n}$. Finally, fix a sequence $a_n$
such that $\sqrt n \ll a_n \ll n$ and
define $\Omega_n$ to be the event that
\begin{align}
 \max_j \left|\sum_i N_{ij} - n \pi_j \right| &\le a_n \label{eq:N_ij-1} \\
 \max_i \left|\sum_j N_{ij} - n \pi_i \right| &\le a_n. \label{eq:N_ij-2}
\end{align}
Note that the condition $\sqrt n \ll a_n$
ensures that the probability of $\Omega_n$ converges to 1.

\begin{proposition}\label{prop:UI-multinomial}
Define 
\[\lambda = \sup_{\alpha \in \Delta_{s^2}(\pi)} \frac{(\alpha - p)^T A (\alpha - p)}{D(\alpha, p)}.\]
If $\lambda < 1$ then
\[
 \E [\1_{\Omega_n} \exp(Y^T A Y)] \to \E \exp (Z^T A Z) < \infty,
\]
as $n \to \infty$, where $Z \sim \Normal(0, \diag(p) - p p^T)$.
On the other hand, if $\lambda > 1$ then
\[
 \E [\1_{\Omega_n}\exp(Y^T A Y)] \to \infty
\]
as $n \to \infty$.
\end{proposition}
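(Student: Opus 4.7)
The plan is to combine a sharp local form of Sanov's theorem for $\hat\alpha := N/n$ with the multinomial CLT, treating the two cases $\lambda<1$ and $\lambda>1$ in parallel. Stirling's formula gives
\[
 \P(N = n\alpha) = (1+o(1)) \frac{c_p}{n^{(s^2-1)/2}} \exp(-n D(\alpha, p))
\]
uniformly for $\alpha$ in compact subsets of the relative interior of $\Delta_{s^2}$, where $c_p$ is a constant depending only on $p$. Setting $\Phi(\alpha) := (\alpha - p)^T A (\alpha - p) - D(\alpha, p)$, the quantity of interest becomes (essentially) a Riemann sum of $n^{-(s^2-1)/2} e^{n \Phi(\alpha)}$ over lattice points $\alpha \in n^{-1}\mathbb{Z}^{s^2}$ that lie in $\Omega_n \cap \Delta_{s^2}$.

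\emph{Case $\lambda<1$.} The plan is to establish $L^{1+\varepsilon}$ boundedness of the integrand, which yields uniform integrability. Pick $\varepsilon>0$ with $(1+\varepsilon)\lambda<1$. First I would argue by compactness and continuity that the strict inequality $(\alpha-p)^T A(\alpha-p) < (1+\varepsilon)^{-1} D(\alpha,p)$ extends from $\Delta_{s^2}(\pi)$ to the slightly larger set of $\alpha$ whose row and column sums deviate from $\pi$ by at most $a_n/n = o(1)$, at the cost of worsening the ratio by an $o(1)$ amount (one projects each near-feasible $\alpha$ onto $\Delta_{s^2}(\pi)$ and uses uniform continuity of $\Phi$ away from the boundary). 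Combined with the Stirling estimate, each summand in $\E[\1_{\Omega_n} e^{(1+\varepsilon) X^T A X}]$ is dominated by $C n^{-(s^2-1)/2} e^{-c_\varepsilon n D(\alpha,p)}$, and the resulting sum is a convergent Gaussian-type Riemann sum. Uniform integrability together with the CLT $X \Rightarrow Z$ then yields $\E[\1_{\Omega_n} e^{X^T A X}] \to \E e^{Z^T A Z}$, and finiteness of the limit follows from Fatou.

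\emph{Case $\lambda>1$.} Here I would choose $\alpha^* \in \Delta_{s^2}(\pi)$ (in the relative interior, by continuity) with $\Phi(\alpha^*)>0$, and take a small Euclidean ball $B$ around $\alpha^*$ on which $\Phi \ge \Phi(\alpha^*)/2$ and the marginals of every point stay within $o(a_n/n)$ of $\pi$ (so $\Omega_n$ is automatic). The number of multinomially-realizable lattice points in $B$ is $\Theta(n^{s^2-1})$, each contributing at least $C n^{-(s^2-1)/2} e^{n\Phi(\alpha^*)/2}$ to the sum, so the total contribution is $\Theta(n^{(s^2-1)/2}) e^{n\Phi(\alpha^*)/2} \to \infty$.

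The main technical obstacles I anticipate are the behavior near $\partial \Delta_{s^2}$, where the Stirling asymptotics degenerate and $D(\alpha,p)$ is singular, and the precise transfer from the $\Omega_n$-marginals to the exact marginals of $\Delta_{s^2}(\pi)$. For the boundary, I would threshold: if some $\alpha_{ij} < n^{-1+\eta}$ then $\P(N=n\alpha)$ is exponentially small even without the $e^{n\Phi}$ factor (and $(\alpha-p)^T A (\alpha-p)$ is bounded on $\Delta_{s^2}$), while away from the boundary the standard asymptotics apply uniformly. For the marginal transfer, the projection argument above suffices because $a_n/n \to 0$ and the supremum in the definition of $Q$ (hence of $\lambda$) is continuous in the marginal vector by a straightforward compactness/continuity lemma.
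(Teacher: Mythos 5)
Your proposal follows essentially the same route as the paper's proof: a local Sanov/Stirling estimate $\Pr(N=n\alpha)\asymp e^{-nD(\alpha,p)}$ (up to polynomial factors), an $L^{1+\varepsilon}$ bound obtained from $\lambda<1$ via $(\alpha-p)^TA(\alpha-p)\le\lambda D(\alpha,p)$ together with strong convexity of $D$, a projection of $\Omega_n$-points onto $\Delta_{s^2}(\pi)$ to transfer the inequality, a Gaussian-type Riemann-sum bound, uniform integrability plus the multinomial CLT for the limit, and a witness point with $\Phi(\alpha^*)>0$ for divergence when $\lambda>1$. Your treatment is, if anything, slightly more careful than the paper's (explicit handling of the simplex boundary and of the polynomial prefactors), but it is the same argument.
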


\begin{lemma}\label{lem:UI-multinomial}
 For any $\epsilon > 0$, any $k = 2, 3, \dots$, and any $p \in \Delta_k$, there is a constant $C < \infty$
 such that for any $n$,
 \[
  n^{-k/2} \sum_{r_1 + \cdots + r_k = n} \exp\left(-n \epsilon\left|\frac rn - p\right|^2\right) \le C.
 \]
\end{lemma}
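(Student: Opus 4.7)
The plan is to upper-bound the constrained sum by a product of one-dimensional Gaussian sums, deliberately discarding the simplex constraint $\sum_i r_i = n$ (and the positivity constraints) at the outset. Although this costs a factor of $\sqrt{n}$ relative to the sharp asymptotic of the constrained sum, the prefactor $n^{-k/2}$ still leaves ample room for a uniform $O(1)$ bound, and it avoids the more delicate $(k-1)$-dimensional Riemann-sum analysis on the simplex that would otherwise be required.

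Concretely, the key observation is the factorization $n\epsilon\left|\tfrac{r}{n}-p\right|^{2} = \tfrac{\epsilon}{n}\sum_{i=1}^{k}(r_i-np_i)^2$. Dropping the constraints gives
\[
\sum_{r_1+\cdots+r_k=n} \exp\!\left(-n\epsilon\left|\tfrac{r}{n}-p\right|^{2}\right)
\;\le\; \sum_{r\in\Z^{k}} \exp\!\left(-\tfrac{\epsilon}{n}\sum_{i=1}^{k}(r_i-np_i)^{2}\right)
\;=\; \prod_{i=1}^{k}\sum_{r_i\in\Z}\exp\!\left(-\tfrac{\epsilon(r_i-np_i)^{2}}{n}\right).
\]
Each factor is a shifted one-dimensional Gaussian sum. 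By a standard integral-test argument (splitting at the integer $r_i^{*}$ closest to $np_i$, where the summand is unimodal with peak value at most $1$, and bounding the tails on either side of $r_i^{*}$ by the corresponding Gaussian integral $\int_{\R}\exp(-\epsilon x^{2}/n)\,dx = \sqrt{\pi n/\epsilon}$), each factor is at most $O(\sqrt{n/\epsilon})$, uniformly in the shift $np_i$ and in $n\ge 1$.

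Taking the product over $i$ yields an upper bound of the form $C_{k,\epsilon}\, n^{k/2}$; multiplying by the prefactor $n^{-k/2}$ cancels this factor exactly and gives the lemma with $C = C_{k,\epsilon}$. There is no serious obstacle: the one-dimensional integral-test bound is classical, and its uniformity in both $n$ and the shift follows immediately from the unimodality of the Gaussian, so the handling of small $n$ (where the integer closest to $np_i$ could straddle the mode in an awkward way) contributes only an $O(1)$ additive correction that is absorbed into the constant.
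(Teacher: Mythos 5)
Your proposal is correct and follows the same route as the paper: drop the constraint $\sum_i r_i = n$, factor the bound into $k$ one-dimensional shifted Gaussian sums, and show each is $O(\sqrt{n/\epsilon})$ uniformly in the shift, so that the $n^{-k/2}$ prefactor cancels the product. The only difference is in the last technical step — you bound the one-dimensional sum by the standard integral-comparison test for a unimodal summand, whereas the paper stratifies the sum into $\lceil\sqrt{n}\rceil$ arithmetic progressions of stride $\lceil\sqrt{n}\rceil$ and bounds each by a geometric series with ratio $e^{-\epsilon}$; both give the same uniform bound.
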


\begin{proof}
 We have
 \begin{align*}
  n^{-k/2} \sum_{r_1 + \cdots + r_k = n} \exp\left(-n \epsilon\left|\frac rn - p\right|^2\right)
  &\le n^{-k/2} \sum_{r_1, \dots, r_k =1}^n \exp\left(-n \epsilon\left|\frac rn - p\right|^2\right) \\
  &= \prod_{i=1}^k \left[ n^{-1/2} \sum_{r=1}^n \exp\left(-n \epsilon\left(\frac rn - p_i\right)^2\right) \right].
 \end{align*}
 The problem has now reduced to the case $k=1$; i.e., we need to show that
 \[
  n^{-1/2} \sum_{r=1}^n \exp(-n\epsilon (r/n - p)^2) < C(p, \epsilon).
 \]
 We do this by dividing the sum above into $\ell = \lceil \sqrt n\rceil$ different sums. Note that if $\frac rn \ge p$ then
 \begin{equation}\label{eq:binom-ui-1}
  \left(\frac{r + \ell}{n} - p\right)^2 = \left(\frac rn - p\right)^2 + \frac{\ell^2}{n^2} + \frac{2\ell}{n}\left(\frac rn - p\right)
  \ge \left(\frac rn - p\right)^2 + \frac 1n.
 \end{equation}
 Hence, $r \ge n p$ implies
 \[
  \exp\left(-n\epsilon\left( \frac {r + \ell}{n} - p\right)^2 \right)
  \le e^{-\epsilon} \exp\left(-n\epsilon\left( \frac {r}{n} - p\right)^2 \right).
 \]
 Stratifying the original sum into strides of length $\ell$,
 \begin{align*}
  n^{-1/2} \sum_{r = \lceil pn \rceil}^n \exp(-n\epsilon (r/n - p)^2)
  &\le n^{-1/2} \sum_{r=\lceil pn \rceil}^{\lceil pn \rceil + \ell-1} \sum_{m=0}^\infty \exp(-n\epsilon ((r + m\ell)/n - p)^2).
 \end{align*}
 Now,~\eqref{eq:binom-ui-1} implies that the inner sum may be bounded by a geometric series with initial value less than 1, and ratio $e^{-\epsilon}$. Hence,
 \[
  n^{-1/2} \sum_{r = \lceil pn \rceil}^n \exp(-n\epsilon (r/n - p)^2) \le n^{-1/2} \ell \frac{1}{1-e^{-\epsilon}},
 \]
 which is bounded. A similar argument for the case $r \le pn$ completes the proof.
\end{proof}

\begin{proof}[Proof of Proposition~\ref{prop:UI-multinomial}]
 First, recall that for any $\alpha = (\alpha_{11}, \dots, \alpha_{ss}) \in \Delta_{s^2}$, we have
 $\Pr(N = \alpha n) \asymp \exp(-nD(\alpha, p))$; this just follows from Stirling's approximation.
 Next, note that $D(\alpha, p)$ is zero only
 for $\alpha = p$, and that $D(\alpha, p)$ is strongly concave in $\alpha$. Therefore, $\lambda < 1$ implies
 that there is some $\epsilon > 0$ such that
 \[
  D(\alpha, p) \ge (1+\epsilon) (\alpha - p)^T A (\alpha - p) + \epsilon |\alpha - p|^2
 \]
 for all $\alpha \in \Delta_{s^2}(p)$. Hence, any $\alpha \in \Delta_{s^2}(p)$ satisfies
 \begin{equation}\label{eq:binom-ui-2}
  \Pr(N = \alpha n) \exp(n (1 + \epsilon)(\alpha - p)^T A (\alpha-p)) \le C \exp(-n\epsilon |\alpha - p|^2).
 \end{equation}
 Recalling the definition of $\Omega_n$, we write (with a slight abuse of notation)
 $\alpha \in \Omega_n$ if $|\max_i \sum_j \alpha_{ij} - p_i| \le n^{-1} a_n$ and similarly with $i$ and $j$ reversed.
 Note that for every $\alpha \in \Omega_n$, there is some $\tilde \alpha \in \Delta_{s^2}(\pi)$ with
 $|\alpha - \tilde \alpha|^2 = o(n^{-1})$; in particular,~\eqref{eq:binom-ui-2}
 also holds for all $\alpha \in \Omega_n$ (with a change in the constant $C$).
 Then
 \begin{align*}
  \E [\1_{\Omega_n} \exp((1 + \epsilon) X^T A X)]
  &= \sum_{\alpha \in \Omega_n} \Pr(N = n \alpha) \exp\left(n (1+\epsilon) (\alpha - p)^T A (\alpha - p)\right) \\
  &\le \sum_{\alpha \in \Omega_n} \exp\left(-n \epsilon |\alpha - p|^2\right) \\
  &\le C < \infty,
 \end{align*}
 for some constant $C$ independent of $n$, where the last line follows from Lemma~\ref{lem:UI-multinomial}.
 In particular, $\exp(X^T A X)$ has $1+\epsilon$ uniformly bounded moments, and so it is uniformly integrable
 as $n \to \infty$. Since $X \toD \normal(0, \diag(p) - p p^T)$, it follows that
 $\E \exp(X^T A X) \to \E \exp(X^T A X)$.

 In the other direction, if $\lambda > 1$ then there is some $\alpha \in \Delta_{s^2}(p)$, $\alpha \ne p$ and some
 $\epsilon > 0$ such that
 $D(\alpha, p) \le (\alpha - p)^T A (\alpha - p) - 2\epsilon$. By the continuity of $D(\alpha, p)$ and
 $(\alpha - p)^T A (\alpha - p)$, we see that for sufficiently large $n$, there exists $r \in n\Delta_{s^2}(p)$ such that
 \[
  D(r/n, p) \le (r/n - p)^T A (r/n - p) - \epsilon.
 \]
 For any $n$, let $r^* = r^*(n)$ be such an $r$. Then
 \begin{align*}
  \E \exp(X^T A X)
  &\ge \Pr(N = r^*(n)) \exp\left(n (r^*/n - p)^T A (r^*/n - p)\right) \\
  &\asymp \exp\left(n \left( (r^*/n - p)^T A (r^*/n - p) - D(r^*/n, p)\right)\right) \\
  &\ge \exp\left(n \epsilon\right) \to \infty.
 \end{align*}
\end{proof}


\end{document}